 \newtheorem{thm}{Theorem}[section]
 \newtheorem{lem}[thm]{Lemma}
 \newtheorem{example}[thm]{Example}
 \newtheorem{cor}[thm]{Corollary}
 \newtheorem{prop}[thm]{Proposition}
 \newtheorem{defn}[thm]{Definition}
 \newtheorem{conj}[thm]{Conjecture}
 \newtheorem{prob}[thm]{Problem}
\newcommand\Prefix[3]{\vphantom{#3}#1#2#3}
\newcommand{\lk}{\operatorname{lk}}
\newcommand{\rk}{\operatorname{rk}}
\newcommand{\st}{\operatorname{st}}
\renewcommand{\aa}{\mathbf{a}}
\newcommand{\bb}{\mathbf{b}}
\newcommand{\cc}{\mathbf{c}}
\newcommand{\dd}{\mathbf{d}}
\newcommand{\Z}{\mathbb{Z}}
\newcommand{\Q}{\mathbb{Q}}
\newcommand{\N}{\mathbb{N}}
\newcommand{\R}{\mathbb{R}}
\newcommand{\C}{\mathbb{C}}
\newcommand{\tp}{\hat{1}}
\renewcommand{\th}{\hat{h}}
\newcommand{\tg}{\hat{g}}
\newcommand{\bt}{\hat{0}}
\newcommand{\Th}{\tilde{h}}
\newcommand{\PP}{\mathcal{P}}
\newcommand{\xx}{\mathbf{x}}
\begin{document}
 
 \title{Face enumeration - from spheres to manifolds}
\thanks{Partially supported by NSF grant DMS-0245623.}
\author{Ed Swartz}
\address{Dept. of Mathematics, Cornell University}
\address{Ithaca, NY 14853}
\email{ebs@math.cornell.edu}

\begin{abstract}
We prove a number of new restrictions on the enumerative properties  of homology manifolds and semi-Eulerian complexes and posets.  These include a determination of the affine span of the fine $h$-vector of balanced semi-Eulerian complexes  and the toric $h$-vector of semi-Eulerian posets.  

The lower bounds on simplicial  homology manifolds, when combined with  higher dimensional analogues of Walkup's 3-dimensional constructions \cite{Wal}, allow us to give a complete characterization of the $f$-vectors of arbitrary simplicial triangulations of $S^1 \times S^3, \C P^2,$ $ K3$ surfaces, and $(S^2 \times S^2) \# (S^2 \times S^2).$  We also establish a principle which leads to a conjecture for homology manifolds which is almost logically equivalent to the $g$-conjecture for homology spheres.  Lastly, we show that with sufficiently many vertices, every triangulable homology  manifold without boundary of dimension three or greater can be triangulated
in a $2$-neighborly fashion.

   \end{abstract}

  \maketitle
  
  \section{Introduction}
 
\ The fundamental {\it combinatorial} invariant of a $(d-1)$-dimensional triangulated space is its $f$-vector, $(f_0,\dots,f_{d-1}),$ where $f_i$ counts the number of $i$-dimensional faces.  After the Euler-Poincar\'{e} formula, the Dehn-Sommerville equations for simplicial polytopes are the best known restrictions on the $f$-vectors of manifolds.  While algebraic topology in general, and the topology of manifolds in particular,  made great strides in the first half of the twentieth century, it was not until 1964 that Klee published the manifold equivalent of the Dehn-Sommerville equations.

In the 70's, the introduction of commutative algebra in the form of the face ring by Hochster \cite[Theorems 4.1 and 4.8]{St}, Reisner \cite{Re} and Stanley \cite{St1}, and the connection between toric varieties and rational polytopes (see, for instance, \cite{Da}),  led to dramatic advances in the understanding of the enumerative properties of polytopes and spheres.  By 1980, McMullen's conjectured characterization of the $f$-vectors of simplicial convex polytopes \cite{Mc3} was verified by Stanley \cite{St2} (necessity), and Billera and Lee \cite{BLe} (sufficiency).  Since then, one of the most important problems in understanding the combinatorics of triangulations has become known as the $g$-conjecture (cf. Conjecture \ref{g-conj}): Do $f$-vectors of simplicial spheres, or more generally homology spheres, also satisfy McMullen's conditions?

Motivated by a desire to understand the face posets of polytopes, the 1980's and 90's saw the introduction of balanced complexes \cite{St5},  the $cd$-index \cite{BBa}, \cite{BK}, and the toric $h$-vector \cite[Section 3.14]{St6}.  All of these invariants make sense and were studied in the context of Eulerian posets, which include the face posets of regular cell decompositions of spheres and odd-dimensional compact manifolds without boundary.  Section \ref{complexes} is devoted to extending these ideas to semi-Eulerian posets and complexes.  These include the face posets of regular cell decompositions of compact even-dimensional manifolds without boundary.
The main results determine the affine span of each of these invariants.

The great variety and complications possible in the topology of manifolds has made the study of their $f$-vectors a daunting task.  At present there is not even a guess as to what the set of all possible $f$-vectors of manifolds (without boundary) would look like in dimensions greater than three.  The two most comprehensive conjectures in print are due to Kalai, \cite[Conjecture 7.5]{No} and K\"uhnel, \cite[Conjecture 18]{Lu}.  While these conjectures would have far reaching consequences for $f$-vectors of manifolds, they only concern the rational Betti numbers.  It is not an exaggeration to say that at this point there is no understanding whatsoever of the impact on the combinatorics of triangulations of many of the classical manifold invariants such as the cohomology ring structure, characteristic classes, or even torsion Betti numbers!  Perhaps it is appropriate that as of the beginning of the twenty-first century  it is still an open question in dimensions five and above  whether or not every compact topological manifold  without boundary  has a triangulation.  For information on what {\it is} known, especially concerning combinatorial manifolds, see the recent surveys by Datta \cite{Dat} and  Lutz \cite{Lu}.

One of the main results in Section \ref{inequalities}, Theorem \ref{g-thm to manifold}, can roughly be interpreted to mean that the distance between what we know about spheres and manifolds, while still substantial,  is not as great as it might seem.  It turns out that there is a conjecture for homology manifolds which is almost logically equivalent to the $g$-conjecture for spheres.  The rest of the section contains a number of restrictions on the $f$-vectors of homology manifolds.  All of our proofs work for arbitrary triangulations, not just combinatorial ones. The main new feature is the use of the face ring to produce lower bounds for the number of vertices {\it and edges}.  One consequence is  that  K\"uhnel's triangulations of sphere bundles over the circle \cite{Ku2} minimize the $f$-vector over all homology manifolds without boundary and nonzero first Betti number (Theorem \ref{min nonzero betti}).

The last section contains several constructions, most of which are higher dimensional analogues of those introduced by Walkup in dimension three \cite{Wal}.  In combination with our previous results, these techniques allow us to give complete characterizations of the $f$-vectors of $S^1 \times S^3, \C P^2,$ any K3 surface, and  $(S^2 \times S^2) \# (S^2 \times S^2).$   In addition, many partial results are possible, such as a description of all possible pairs $(f_0, f_1)$ which can occur in triangulations of $S^3 \times S^3.$   We end with another extension to higher dimensions of a result of Walkup's in dimension three. This theorem says that for any boundaryless homology manifold $M^{d-1}$  which can be triangulated, there exists $\gamma(M^{d-1})$ such that if $f_1 - d f_0 \ge \gamma(M^{d-1}),$ then there is a triangulation of $M^{d-1}$ with $f_0$ vertices  and $f_1$ edges.  In particular, for sufficiently many vertices, $M^{d-1}$ has a $2$-neighborly triangulation.

We have covered all of the manifolds for which we know necessary and sufficient conditions on the $f$-vectors of all possible triangulations.  Otherwise, we have not attempted to be encyclopedic in listing all possible applications of our methods to the large number of currently known triangulations. Rather, we have given a sample of the ways these techniques might be employed.   

Note:  Since this paper was originally written the set of all possible $f$-vectors of the nonorientable $S^3$ -bundle over $S^1$ was determined in \cite{CSS} using Theorem \ref{h by betti}.

\section{Notations and conventions}

Throughout, $\Delta$ is a connected, pure, $(d-1)$-dimensional simplicial complex with $n$ vertices and vertex set $V  = \{v_1, \dots, v_n\}.$ A simplicial complex is {\it pure} if all of its facets (maximal faces)  have the same dimension.  In addition, we will always  assume that $d \ge 4.$  The {\it geometric realization} of $\Delta, |\Delta|$ is the union in $\R^n$ over all faces $\{v_{i_1}, \dots, v_{i_j}\}$ of $\Delta$ of the convex hull of $\{e_{i_1},\dots,e_{i_j}\},$ where $\{e_1, \dots, e_n\}$ is the standard basis of $\R^n.$ We say $\Delta$ is {\it homeomorphic} to another space whenever $|\Delta|$ is.  A {\it triangulation} of a topological space $M$ is any simplicial complex $\Delta$ such that $\Delta$ is homeomorphic to $M.$  

The {\it link} of a face $\rho \in \Delta$ is  

$$\lk \rho=\displaystyle\bigcup_{\stackrel{\tau \cup \rho\in \Delta}{\tau \cap \rho= \emptyset}} \tau.$$

The {\it closed star} of a face $\rho \in \Delta$ is
$$\overline{\st} \rho= \displaystyle\bigcup_{\stackrel{\sigma \subseteq \tau, \tau \supseteq \rho}{\sigma \in \Delta}} \sigma.$$

The {\it join} of $\Delta$ and $\Delta^\prime,$ where the vertex set $V^\prime$ of $\Delta^\prime$ is disjoint from $V$, is

$$\Delta \ast \Delta^\prime = \{\rho \cup \rho^\prime: \rho \in \Delta, \rho^\prime \in \Delta^\prime\}.$$

For any poset $(P,\le),$ the {\it order complex} of $P$ is the simplicial complex whose  vertices are the elements of $P$ and whose faces are chains of $P.$ If $P$ contains a greatest element $\tp$ and/or a least element $\bt,$ then the {\it reduced} order complex of $P$ is the order complex of $P - \{\bt, \tp\}.$

Homology manifolds are a natural generalization of  topological manifolds.  Fix a field $k$. If for all $x \in |\Delta|,$ $\tilde{H}_i(|\Delta|, |\Delta|-x;k) = 0$ when $i < d-1,$ and either $k$ or $0$ when $i=d-1,$ then $\Delta$ is a {\it $k$-homology manifold.}  Equivalently, for every nonempty face $\rho \in \Delta,$ $H_\star(\lk \rho; k)$ is isomorphic to either the $k$-homology of $S^{d-|\rho|-1}$ or $B^{d-|\rho|-1},$ where $B^{d-|\rho|-1}$ is the $d-|\sigma|-1$-dimensional ball.  The {\it boundary} of a homology manifold, denoted $\partial \Delta,$ is the subcomplex consisting of all of the faces $\rho$ such that $H_{d-|\rho|-1}(\lk \rho;k) = 0.$ If  $H_{d-1}(\Delta, \partial \Delta;k) \simeq k,$ then $\Delta$ is {\it orientable} over $k.$  We say $\Delta$ is a {\it closed} homology  manifold over $k$  if $\Delta$ has  no boundary and is orientable over $k.$   If the boundary of $\Delta$ is not empty, then $\partial \Delta$ is a $(d-2)$-dimensional $k$-homology manifold without boundary \cite{Mi}.

The {\it $f$-vector} of $\Delta$ is $( f_0, \dots, f_{d-1}),$ where $f_i$ is the number of $i$-dimensional faces in $\Delta.$ Sometimes it is convenient to set $f_{-1} = 1$ corresponding to the emptyset.   The {\it face polynomial} of $\Delta$ is 

$$f_\Delta(x) = f_{-1} x^d + f_0 x^{d-1} + \dots + f_{d-2} x + f_{d-1}.$$

The {\it $h$-vector} of $\Delta$ is $(h_0, \dots, h_d)$ and is defined so that the corresponding $h$-polynomial, $h_\Delta(x) = h_0 x^d + h_1 x^{d-1} + \dots +h_{d-1} x +  h_d,$ satisfies $h_\Delta(x+1) = f_\Delta(x).$  Equivalently,

\begin{equation} \label{f by h}
  h_i = \sum^i_{j=0} (-1)^{i-j} \binom{d-j}{d-i} f_{j-1}.
\end{equation} 
 
Each $f_i$ is a  {\it nonnegative} linear combination of  $h_0, \dots, h_{i+1}.$  Specifically,
\begin{equation} \label{h by f}
  f_{i-1} = \sum^i_{j=0} \binom{d-j}{d-i} h_j.
\end{equation}

A simplicial complex $\Delta$ is {\it $i$-neighborly} if every subset of vertices of cardinality $i$ is a face of $\Delta.$  

A {\it stacked} polytope is the following inductively defined class of polytopes.   The simplex is a stacked polytope and any polytope obtained from a stacked polytope by adding a pyramid to a facet is a stacked polytope.  Stacked polytopes are simplicial and the boundary of a stacked polytope is a {\it stacked sphere}.  A purely combinatorial characterization of stacked spheres is due to Kalai.
 Let $\phi_i(n,d)$ be the number of $i$-dimensional faces in a $(d-1)$-dimensional stacked sphere with $n$ vertices.  Equivalently,

\begin{equation} \label{phi}
\phi_i(n,d) = \begin{cases}   \binom{d}{i} n - \binom{d+1}{i+1} i & \mbox{ for } 1 \le i \le d-2 \\  (d-1) n - (d+1)(d-2) & \mbox{ for } i=d-1. \end{cases}
\end{equation}

\begin{thm} \cite[Theorem 1.1]{Kal2} \label{Kal3}
  Let $\Delta$ be a homology manifold without boundary.  Then $f_i(\Delta) \ge \phi_i(n,d).$  If $f_i(\Delta) = \phi_i(n,d)$ for any $1 \le i \le d-1,$ then $\Delta$ is a stacked sphere.
\end{thm}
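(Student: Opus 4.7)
The plan is to use generic rigidity of graphs in $\R^d$ combined with induction on dimension, since the core inequality is $f_1(\Delta) \ge \phi_1(n,d) = dn - \binom{d+1}{2}$, which is equivalent via \eqref{h by f} to $h_2(\Delta) \ge h_1(\Delta)$. The bounds on higher face numbers will be extracted from this one together with information about links.

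The first step is to show that the $1$-skeleton $G(\Delta)$ is generically $d$-rigid in $\R^d$. Since $d \ge 4$ and $\Delta$ is a homology manifold without boundary, the link of every vertex $v$ is a $(d-2)$-dimensional homology sphere, whose graph is generically $(d-1)$-rigid by induction on $d$. The closed star $\overline{\st}\,v$ is a cone on $\lk v$, and coning preserves rigidity while raising the ambient dimension by one, so the graph of $\overline{\st}\,v$ is generically $d$-rigid. Using connectedness of $\Delta$ together with the observation that any two adjacent closed stars overlap in at least $d$ vertices along a ridge, the standard gluing lemma in rigidity theory propagates rigidity from the individual stars to $G(\Delta)$. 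A dimension count on infinitesimal motions then yields $f_1 \ge dn - \binom{d+1}{2}$.

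The higher-dimensional face bounds $f_i \ge \phi_i(n,d)$ for $2 \le i \le d-1$ can be derived by applying the edge-count bound recursively inside vertex links: each $\lk v$ is a $(d-2)$-homology sphere, hence its edge count is bounded below by $\phi_1(f_0(\lk v), d-1)$, and summing these bounds with the correct multiplicity of incidence reproduces the $\phi_2(n,d)$ formula. Iterating this together with the Dehn-Sommerville type equations for homology manifolds (which account for the possibly nontrivial Euler characteristic correction) yields the remaining cases, including the bound on $f_{d-1}$.

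The main obstacle is the equality statement. When $f_i(\Delta) = \phi_i(n,d)$ for some $1 \le i \le d-1$, every inequality in the induction must be tight, forcing $G(\Delta)$ to be generically $d$-isostatic (minimally rigid) and every vertex link to be a stacked $(d-2)$-sphere. One then produces a vertex $v$ of minimum degree $d$, whose link is therefore forced to be the boundary of a $(d-1)$-simplex. Replacing $\overline{\st}\,v$ by the single facet opposite $v$ yields a smaller homology manifold still meeting the bound with equality, and induction on $n$ terminates at the boundary of a $d$-simplex. Reversing the sequence of vertex deletions exhibits $\Delta$ as a stacked sphere. The delicate point here is guaranteeing the existence of such a degree-$d$ vertex with simplex-boundary link; this requires extracting combinatorial consequences from the isostaticity condition, and is where the bulk of the technical work will be.
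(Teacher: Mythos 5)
This theorem is not proved in the paper at all --- it is cited from Kalai's ``Rigidity and the lower bound theorem.\ I'' and used as a black box (together with Corollary~\ref{Kal2}), so there is no internal proof to compare against. That said, your reconstruction is worth evaluating against Kalai's actual argument.

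Your first two stages match Kalai's proof faithfully: generic $d$-rigidity of the $1$-skeleton is obtained exactly by the cone lemma applied to each vertex link (a $(d-2)$-homology sphere, generically $(d-1)$-rigid by induction, base case $d=4$ handled by Steinitz/Whiteley) and the gluing lemma propagated along a connected sequence of closed stars overlapping in $\ge d$ vertices; and the higher face bounds come from an MPW-type reduction summing the edge bound over vertex links, as you describe and as the paper itself re-uses in the proof of Theorem~\ref{min nonzero betti}.

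The gap is in the equality case, and you flag it yourself. Your plan is to produce a vertex of degree exactly $d$ (so its link is $\partial\sigma^{d-1}$) and ``unstack.'' But $g_2=0$ gives average degree $2f_1/n = 2d - d(d+1)/n$, which for $n$ large is close to $2d$ and does not force a vertex of degree $d$; no simple degree count supplies the vertex you need, and in fact producing it is essentially equivalent to already knowing $\Delta$ is stacked. Kalai's route is genuinely different: one first shows that $g_2(\Delta)=0$ forces $g_2(\lk v)=0$ for every vertex $v$ (the key step, which uses the rigidity/stress machinery, not degree counting). By induction on dimension each vertex link is then a stacked $(d-2)$-sphere, so $\Delta\in\mathcal{H}^{d-1}$ by Theorem~\ref{walk1}. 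Finally, for a complex in $\mathcal{H}^{d-1}$ each handle addition contributes $\binom{d+1}{2}$ to $g_2$, so $g_2=0$ rules out all handles and leaves a connected sum of boundaries of simplices, i.e.\ a stacked sphere; the base case $d=4$ is Walkup's 3-dimensional lower bound theorem. If you want to salvage the ``unstacking'' induction, you would have to extract the degree-$d$ vertex \emph{after} establishing $\Delta\in\mathcal{H}^{d-1}$ with no handles, at which point the induction is superfluous; as stated, the step asserting the degree-$d$ vertex exists is a real hole.
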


\begin{cor} \cite{Kal2}  \label{Kal2}
  Let $\Delta$ be a homology manifold without boundary.  Then $\Delta$ is a stacked sphere if and only if $h_1(\Delta) = h_2(\Delta).$
\end{cor}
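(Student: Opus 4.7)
The plan is to reduce the corollary to the $i=1$ case of Theorem \ref{Kal3} via a direct algebraic manipulation of the $h$-vector, so that $h_1 = h_2$ becomes exactly the equality condition $f_1 = \phi_1(n,d)$.

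First I would compute $h_1$ and $h_2$ from (\ref{f by h}). Using $f_{-1}=1$ and $f_0 = n$, one gets
\begin{equation*}
h_1 = f_0 - d = n - d, \qquad h_2 = f_1 - (d-1)f_0 + \binom{d}{2},
\end{equation*}
so that
\begin{equation*}
h_2 - h_1 = f_1 - d f_0 + \binom{d}{2} + d = f_1 - d n + \binom{d+1}{2}.
\end{equation*}
Next, reading off $\phi_1(n,d) = dn - \binom{d+1}{2}$ from (\ref{phi}) yields the key identity
\begin{equation*}
h_2 - h_1 = f_1 - \phi_1(n,d).
\end{equation*}

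Now I would apply Theorem \ref{Kal3} with $i=1$. That theorem gives $f_1(\Delta) \ge \phi_1(n,d)$ for any homology manifold without boundary, so $h_2 \ge h_1$, with equality iff $f_1(\Delta) = \phi_1(n,d)$. The equality clause of Theorem \ref{Kal3} then says this forces $\Delta$ to be a stacked sphere. Conversely, a stacked sphere with $n$ vertices has $f_1 = \phi_1(n,d)$ by the very definition of $\phi_1$, hence $h_1 = h_2$.

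There is no real obstacle here: the argument is essentially the one-line identity $h_2 - h_1 = f_1 - \phi_1(n,d)$ combined with Theorem \ref{Kal3}. The only thing to be slightly careful about is the standing hypothesis $d \ge 4$, which ensures the formula for $\phi_1(n,d)$ from (\ref{phi}) applies (it needs $1 \le i \le d-2$).
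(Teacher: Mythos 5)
Your proof is correct and is the natural (indeed, essentially the only) derivation of the corollary from Theorem \ref{Kal3}: the identity $h_2 - h_1 = f_1 - \phi_1(n,d)$ reduces the claim immediately to the $i=1$ case of that theorem. The paper states the corollary without proof, citing \cite{Kal2}, so there is no separate argument in the paper to compare against, but your reasoning is exactly what the label ``Corollary'' presupposes.
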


Let $\Delta^\prime$ be another $(d-1)$-dimensional complex and let $\sigma^\prime$ be a facet of $\Delta^\prime.$  Let $\sigma$ be a facet of $\Delta$ and   choose a bijection between the vertices of $\sigma^\prime$ and the vertices of $\sigma.$  The {\it connected sum}  of $\Delta$ and $\Delta^\prime, \Delta \# \Delta^\prime,$ is the complex obtained by identifying the vertices (and corresponding faces) of $\Delta$ and $\Delta^\prime$  by the chosen bijection, and then removing the facet corresponding to $\sigma$ ($= \sigma^\prime).$  If both complexes are $(d-1)$-dimensional homology manifolds without boundary, then any connected sum  is also a homology manifold without boundary.  However, the homeomorphism type of $\Delta \# \Delta^\prime$ may depend on the chosen bijection.  Direct calculation shows that $h_d(\Delta \# \Delta^\prime) = h_d(\Delta) + h_d(\Delta^\prime) -1$ and for $0 < i < d, h_i(\Delta \# \Delta^\prime) = h_i(\Delta) + h_i(\Delta^\prime).$  

Another method for forming new complexes out of old is handle addition.  Let $\sigma$ and $\sigma^\prime$ be disjoint facets of $\Delta.$  Also, let $\phi$ be a bijection between the vertices of the two facets.  Identify each pair of vertices $(v, \phi(v))$ and any corresponding faces.  As long as $v$ and $\phi(v)$ are not neighbors and there are no vertices which have both $v$ and $\phi(v)$ as neighbors, the resulting space will still be a simplicial complex and we say it is obtained by {\it handle addition}.  If the original complex is a homology manifold without boundary, then so is the new complex.  As before, the homeomorphism type of the new complex may depend on the choice of bijection.

In \cite{Wal} Walkup introduced $\mathcal{H}^{d-1},$ the set of simplicial complexes that can be obtained from $(d-1)$-dimensional stacked spheres by repeated handle addition.  As we will see (cf. Theorem \ref{min h} and Theorem \ref{h by betti}), the triangulations in $\mathcal{H}^{d-1}$ are minimal in a certain sense.

\begin{thm} \label{walk1} \cite{Wal}, \cite{Kal2}
  $\Delta \in \mathcal{H}^{d-1}$ if and only if the link of every vertex of $\Delta$ is a stacked sphere.
\end{thm}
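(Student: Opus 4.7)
I handle the two directions separately.

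\emph{Necessity.} Suppose $\Delta \in \mathcal{H}^{d-1}$; induct on the number of handle additions used to produce $\Delta$ from a stacked sphere. For the base case---that every vertex link of a stacked sphere is itself a stacked sphere---run a secondary induction on the number of stacking operations. In the boundary of the $d$-simplex every vertex link is a smaller simplex boundary. When one attaches a pyramid with apex $w$ over a facet $\tau$, the link of $w$ is the simplex boundary $\partial \tau$; for each $v \in \tau$, the facet $\tau \setminus \{v\}$ of $\lk v$ is replaced by the cone from $w$ over its boundary, which is precisely a single stacking operation applied to the (inductively stacked) link $\lk v$; and links of vertices outside $\tau$ are unchanged. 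For the inductive step, a handle addition identifying disjoint facets $\sigma, \sigma'$ along a bijection $\phi$ only alters vertex links at the identified pairs $v \sim \phi(v)$, and a direct check shows the new link is the connected sum $\lk_\Delta v \# \lk_\Delta \phi(v)$, glued along $\sigma \setminus \{v\}$ and $\sigma' \setminus \{\phi(v)\}$. A connected sum of stacked spheres is stacked: if the second summand is realized as a sequence of stackings starting from the pasted facet, then the same sequence of stackings can be carried out on top of the first summand.

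\emph{Sufficiency.} Assume every vertex link of $\Delta$ is a stacked sphere; then each link is a genuine sphere, so $\Delta$ is a closed homology manifold. I would induct on the nonnegative quantity $h_2(\Delta) - h_1(\Delta)$ (nonnegative by Theorem \ref{Kal3}). In the base case $h_2 = h_1$, Corollary \ref{Kal2} yields that $\Delta$ is a stacked sphere, hence in $\mathcal{H}^{d-1}$. For $h_2 > h_1$ the plan is to exhibit disjoint facets $\sigma, \sigma'$ of $\Delta$ and a bijection $\phi$ between them so that reversing the handle addition yields a smaller complex $\Delta_0 \in \mathcal{H}^{d-1}$ with strictly smaller $h_2 - h_1$ whose vertex links remain stacked. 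The link condition on $\Delta_0$ is automatic, since reversing the handle decomposes the stacked link at each identified vertex into a connected sum of two stacked spheres, and every non-simplex-boundary stacked sphere admits such a splitting.

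\emph{Main obstacle.} The crux is producing the candidate handle $(\sigma, \sigma', \phi)$ in the sufficiency argument; the link hypothesis alone does not exhibit it directly. I expect the argument to invoke the rigidity-theoretic machinery behind Theorem \ref{Kal3}---for instance Kalai's algebraic shifting---to show that when $h_2 > h_1$ the combinatorial structure of $\Delta$ forces the existence of two such facets whose vertex identifications are compatible with the stacked-link structure, producing the desired inductive reduction.
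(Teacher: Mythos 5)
The paper does not prove this statement; it is cited to Walkup and Kalai, so there is no internal proof to compare against. I therefore evaluate your proposal on its own merits.

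Your necessity direction is sound: stacking at a facet $\tau$ induces a single stacking on $\lk v$ for each $v \in \tau$ and leaves other vertex links unchanged; handle addition along $(\sigma, \sigma', \phi)$ changes the link only at identified vertices, where the new link is the connected sum $\lk_\Delta v \,\#\, \lk_\Delta \phi(v)$; and connected sums of stacked spheres are stacked. One point to keep straight: the paper's definition of handle addition implicitly removes the identified facet (otherwise the result is not a manifold), so after the operation what had been a facet becomes a missing $(d-1)$-simplex.

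The sufficiency direction has a genuine gap, which you honestly flag, but the plan as stated is oriented in a direction that is unlikely to close it. First, because handle addition removes the identified facet, what you must locate in $\Delta$ is not a pair of disjoint facets but a single \emph{missing} $(d-1)$-simplex $\sigma$ (all of whose proper faces are present) together with a way to pull apart its $d$ vertices. Second, and more seriously, the connected-sum splitting of $\lk_\Delta v$ must be carried out for all $d$ vertices $v$ of $\sigma$ simultaneously and coherently, so that the un-glued object is a bona fide simplicial complex; the pointwise existence of some splitting for each individual stacked link does not furnish this coherence. The argument in the literature does not search for a handle to undo at all. For $d \ge 5$, Kalai exploits the fact that a stacked $(d-2)$-sphere bounds a \emph{unique} stacked $(d-1)$-ball: each $\lk_\Delta v$ bounds a canonical $B_v$, one verifies the $B_v$ agree on overlaps, and then $\bigcup_v (v \ast B_v)$ assembles into a simple $d$-tree with facet identifications whose boundary recovers $\Delta$, directly exhibiting the $\mathcal{H}^{d-1}$ structure; for $d=4$ a separate argument of Walkup's is needed. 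The rigidity and algebraic shifting machinery you suggest is what drives Theorem \ref{Kal3}, not this characterization, so the missing ingredient is the stacked-ball uniqueness and coherence, not a refinement of the handle-finding strategy.
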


  \section{Linear relations}  \label{complexes}

   After the Euler-Poincar\'{e} formula, the Dehn-Sommerville equations for simplicial polytopes (\cite{De}, \cite{So}) were one of the first known restrictions on the $f$-vectors of a class of manifolds.  These relations for polytopes were generalized to semi-Eulerian complexes by Klee.  We say $\Delta$ is a {\it semi-Eulerian} complex if for every nonempty face $\rho$ of $\Delta,$ the Euler characteristic of its link, $\chi (\lk \rho),$ equals $\chi (S^{d - |\rho|-1}).$ Homology manifolds without boundary are a motivating example.   If in addition, $\chi (\Delta) = \chi( S^{d-1}),$ then we say $\Delta$ is an {\it Eulerian} complex.  
    
    \begin{thm}\cite{Ke}
      Let $\Delta$ be a  semi-Eulerian complex.  Then
        \begin{equation} \label{klee}
        h_{d-i} - h_i =  (-1)^i \binom{d}{i} (\chi (\Delta) - \chi (S^{d-1})).
       \end{equation}
    \end{thm}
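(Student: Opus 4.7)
The plan is to prove an equivalent polynomial identity involving the $h$-polynomial and then extract Klee's formula by comparing coefficients. Introduce the face generating polynomial
$$F(\Delta;t) = \sum_{\rho \in \Delta} t^{|\rho|},$$
with the sum including the empty face (using $|\emptyset|=0$). Two reformulations of $F$ will do all the work. First, $F(\Delta;-1) = 1 - \chi(\Delta)$ directly from the definition. Second, combining $F(\Delta;t) = t^d f_\Delta(1/t)$ with $f_\Delta(x) = h_\Delta(x+1)$ gives
$$F(\Delta;t) = \sum_{i=0}^{d} h_i\, t^i (1+t)^{d-i}.$$
A double-count of pairs $(\rho,\tau)$ with $\rho \cap \tau = \emptyset$ and $\rho \cup \tau \in \Delta$, grouped alternately by $\rho$ or by $\sigma = \rho \cup \tau$, establishes
$$\sum_{\rho \in \Delta} s^{|\rho|} F(\lk \rho; t) = F(\Delta; s+t).$$

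Setting $t=-1$ and applying the semi-Eulerian hypothesis $\chi(\lk \rho) = \chi(S^{d-|\rho|-1})$ for each nonempty $\rho$ gives $F(\lk \rho;-1) = (-1)^{d-|\rho|}$. Pulling out the $\rho=\emptyset$ term and packaging the remaining terms as $(-1)^d(F(\Delta;-s)-1)$ yields the functional equation
$$F(\Delta;s-1) - (-1)^d F(\Delta;-s) = -\bigl(\chi(\Delta) - \chi(S^{d-1})\bigr).$$
Substituting the $h$-expansion of $F$ into both sides, and using $(1-s)^{d-i} = (-1)^{d-i}(s-1)^{d-i}$, the $(-1)^d$ factor in the functional equation cancels exactly against the $(-1)^{d-i}$ coming from the expansion of $F(\Delta;-s)$. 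After reindexing the second sum by $j = d-i$, the functional equation collapses to the symmetric form
$$\sum_{i=0}^{d}(h_i - h_{d-i})(s-1)^i s^{d-i} = -\bigl(\chi(\Delta) - \chi(S^{d-1})\bigr).$$

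The final move is the rational substitution $s = 1/(1-v)$, under which $(s-1)^i s^{d-i}$ becomes $v^i/(1-v)^d$. Clearing the denominator produces the polynomial identity $\sum_i(h_i - h_{d-i})v^i = -(\chi(\Delta) - \chi(S^{d-1}))(1-v)^d$, and equating coefficients of $v^i$ on both sides yields Klee's identity. The double-counting and final coefficient extraction are routine; the main obstacle is the sign bookkeeping in the middle step, where one must verify that the $(-1)^d$ factor in the functional equation is exactly the one needed to cancel the signs from $(1-s)^{d-i}$ and that after reindexing the leftover terms organize themselves into the clean antisymmetric form $(h_i - h_{d-i})(s-1)^i s^{d-i}$, whose denominator under the final substitution is uniformly $(1-v)^d$.
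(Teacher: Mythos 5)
Your argument is correct. Each step checks out: the identity $F(\Delta;t) = t^d f_\Delta(1/t) = \sum_{i=0}^d h_i\,t^i(1+t)^{d-i}$ is right, the double-counting identity $\sum_{\rho\in\Delta}s^{|\rho|}F(\lk\rho;t) = F(\Delta;s+t)$ is a clean bijection between pairs $(\rho,\tau)$ with $\rho\cap\tau=\emptyset,\ \rho\cup\tau\in\Delta$ and the binomial expansion of $(s+t)^{|\sigma|}$ over $\sigma\in\Delta$, and the sign bookkeeping in the functional equation
\[
F(\Delta;s-1) - (-1)^d F(\Delta;-s) = -\bigl(\chi(\Delta)-\chi(S^{d-1})\bigr)
\]
works because $(-1)^d(-1)^i(-1)^{d-i}=1$. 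The final substitution $s=1/(1-v)$, collapsing $(s-1)^is^{d-i}$ to $v^i/(1-v)^d$ uniformly in $i$, is a nice way to clear the mess, and equating $v^i$-coefficients delivers exactly Klee's relation.

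The paper itself does not prove this theorem; it cites Klee. However, it does prove the balanced generalization (Theorem \ref{fine DS thm}), and there the method is commutative-algebraic: one computes the Hilbert series $F(k[\Delta],1/\lambda)$ of the Stanley--Reisner ring in two different ways (via the $h_\bb$-expansion of the Hilbert series and via the reciprocity formula \cite[Cor.\ 7.2]{St}), and compares. Your proof is a genuinely different and more elementary route: you work only with the face-generating polynomial $\sum_\rho t^{|\rho|}$ and a direct double-count, so no ring theory, no Hilbert series, no linear systems of parameters. What you lose is the clean path to the balanced refinement --- your two-variable identity $\sum_\rho s^{|\rho|}F(\lk\rho;t)=F(\Delta;s+t)$ would need to become a multi-variable identity tracking each color class, which is exactly what the $\N^m$-graded Hilbert series does automatically. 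What you gain is transparency: the proof is self-contained, and the functional equation makes clear exactly where the semi-Eulerian hypothesis enters, namely in replacing $F(\lk\rho;-1)$ by $(-1)^{d-|\rho|}$ for all nonempty $\rho$. One small stylistic remark: it would be slightly cleaner to carry out the coefficient comparison directly in the variable $s$ (the coefficient of $s^{d-i}$ in $(s-1)^is^{d-i}$ expanded near $s=1$, say), avoiding the rational substitution and the appeal to clearing a denominator, though as written your argument is perfectly valid since the identity in $s$ holds identically.
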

    
\noindent  Our semi-Eulerian complexes were called Eulerian manifolds in \cite{Ke}.  Related equations were discovered earlier by Vaccaro \cite{Va}.

  If $\Delta$ is an odd-dimensional semi-Eulerian complex, then setting $i = d/2$ shows that the Euler characteristic of $\Delta$ is zero and hence $\Delta$ is Eulerian.    We will refer to the above equations as the {\it generalized Dehn-Sommerville equations}.    
    
    Under certain conditions there is a refinement of the generalized Dehn-Sommerville equations.  Let $\aa=(a_1,\dots,a_m)$ be a sequence of positive  integers. Define $|\aa| = a_1 + \dots + a_m.$  Let $\phi:V \to [m]$, with $[m]=\{1,\dots, m\}$, be a surjective function and set $V_j = \phi^{-1}(j).$
    
    \begin{defn} Suppose $|\aa|=d.$  The pair $(\Delta, \phi)$ is a {\bf balanced complex of  type} $\aa$ if for every facet $\sigma \in \Delta$ and $j, 1 \le j \le m,$
    $$|\sigma \cap V_j| = a_j.$$
   \end{defn}
   
\noindent   Balanced complexes of type $(1,\dots,1)$ are called {\it completely balanced.}  The canonical example of a completely balanced complex is the order complex of a graded poset with $\phi(v) = \rk (v).$  If $\bb = (b_1, \dots, b_m)$ is a sequence of nonnegative integers such that $b_j \le a_j,$ then we write $\bb \le \aa.$ When $\Delta$ is completely balanced we can identify sequences $\bb \le (1, \dots, 1)$ with subsets of $[d]$ in the usual way, $\bb \leftrightarrow \{i \in [d]: b_i = 1\}.$

One way to produce examples of balanced complexes is to start with a completely balanced complex $(\Delta,\phi)$ and specialize.   Given $\aa$ with $|\aa|=d,$   let $\psi: [d] \to [m]$ be the map such that $\psi^{-1}(j) = [a_1 + \dots + a_{j-1} + 1, a_1 + \dots + a_j].$  Then $(\Delta,\psi \circ \phi)$ is a balanced complex of type $\aa.$  Under these conditions we write $S \to \bb$ if $S \subseteq [d]$ and $|S \cap  [a_1 + \dots + a_{j-1} + 1, a_1 + \dots + a_j]| = b_j$ for each $j.$  If $\Delta$ is any $(d-1)$-dimensional pure complex and $|\aa|=d,$ then we can construct a balanced complex of type $\aa$ which is homeomorphic to $\Delta$.  Indeed, the (reduced) order complex of the face poset of $\Delta$ is a completely balanced complex homeomorphic to $\Delta$ which can then be specialized to $\aa.$  For a simple example of a balanced complex which is not the specialization of a completely balanced complex, see Figure \ref{bipyramid}.

Let  $(\Delta, \phi)$ be a balanced complex of type $\aa.$  For $\bb \le \aa$  define
$f_\bb$ (if necessary, $f_\bb(\Delta)$) to be the number of faces   $\rho $ such that for all $j, |\rho \cap \phi^{-1}(j)| = b_j.$  The collection  $\{f_\bb\}_{\bb \le \aa}$ is the {\it fine} $f$-vector of $\Delta$ and is a refinement of the $f$-vector in the sense that

$$\displaystyle\sum_{|\bb| = i} f_\bb = f_{i-1}(\Delta).$$

The {\it fine} $h$-vector of $\Delta$  is defined by

\begin{equation} \label{bal h}
h_\bb = \displaystyle\sum_{\cc \le \bb} f_\cc \ \displaystyle\prod^m_{i=1} (-1)^{b_i-c_i} \binom{a_i-c_i}{b_i-c_i}.
\end{equation}

\begin{example}
  The bipyramid in Figure \ref{bipyramid} is a balanced complex of type $(1,2).$ The fine $f$- and $h$-vectors are
$$\begin{array}{lclclcl}
f_{0,0} & = & 1 &\mbox{   }& h_{0,0} & = & 1\\
f_{1,0} & = & 2 &\mbox{   }& h_{1,0} & = & 1 \\ 
f_{0,1} & = & 5 & \mbox{   } & h_{0,1} & = & 3 \\
f_{1,1} & = & 10 & \mbox{   } & h_{1,1} & = & 3\\
f_{0,2} & = & 5 & \mbox{   } & h_{0,2} & = & 1\\
f_{1,2} & = & 10 & \mbox{   } & h_{1,2} & = & 1
\end{array}$$
\end{example}

\begin{thm} \cite{St5}
   Let $(\Delta, \phi)$ be a balanced complex of type $\aa.$  Then
   
   \begin{equation} 
   h_i = \displaystyle\sum_{|\bb|=i} h_\bb.
   \end{equation}
 \end{thm}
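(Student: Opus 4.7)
The plan is a direct double-counting argument: expand the right-hand side using the definition (\ref{bal h}) of the fine $h$-vector, swap the order of summation, and collapse the inner sum via a standard binomial identity, ending with formula (\ref{f by h}) for the usual $h_i$.

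First I would write
$$\sum_{|\bb|=i} h_\bb \;=\; \sum_{|\bb|=i} \sum_{\cc \le \bb} f_\cc \prod_{j=1}^m (-1)^{b_j-c_j}\binom{a_j-c_j}{b_j-c_j}$$
and interchange the two sums, so that the outer summation runs over $\cc \le \aa$ and the inner one over $\bb$ with $\cc \le \bb \le \aa$ and $|\bb|=i$. Writing $k=|\cc|$ and substituting $d_j = b_j-c_j$ (so $0 \le d_j \le a_j-c_j$ and $\sum d_j = i-k$), the coefficient of $f_\cc$ becomes
$$\sum_{\substack{0 \le d_j \le a_j-c_j \\ \sum d_j = i-k}} \prod_{j=1}^m (-1)^{d_j}\binom{a_j-c_j}{d_j},$$
which I recognize as the coefficient of $x^{i-k}$ in $\prod_{j=1}^m (1-x)^{a_j-c_j}$.

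Here is the key observation: since $|\aa|=d$ and $|\cc|=k$, we have $\sum_j (a_j-c_j) = d-k$, so the generating function collapses to $(1-x)^{d-k}$. Extracting the coefficient gives $(-1)^{i-k}\binom{d-k}{i-k}$, a quantity that depends on $\cc$ only through $k=|\cc|$. Grouping the outer sum by $k$ and using that $\sum_{|\cc|=k} f_\cc = f_{k-1}$ by the definition of the fine $f$-vector, I obtain
$$\sum_{|\bb|=i} h_\bb \;=\; \sum_{k=0}^i (-1)^{i-k}\binom{d-k}{d-i} f_{k-1},$$
which is exactly the formula (\ref{f by h}) for $h_i$.

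There is no real obstacle here; the only step that requires a moment's thought is the factorization of the product $\prod (1-x)^{a_j-c_j}$, which works precisely because the balancedness constraint is built into $|\aa|=d$. The proof uses nothing about $\Delta$ beyond this combinatorial identity, which reflects the fact that (\ref{bal h}) is simply the inclusion-exclusion refinement of (\ref{f by h}) split across the color classes $V_1,\dots,V_m$.
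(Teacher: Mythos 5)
Your proof is correct, and it is the natural direct verification: substitute the definition of $h_\bb$, interchange sums, recognize the inner sum as a coefficient in $\prod_j(1-x)^{a_j-c_j}=(1-x)^{d-|\cc|}$, and collect by $|\cc|=k$ to recover formula (\ref{f by h}). The paper itself does not supply a proof (it simply cites \cite{St5}), so there is nothing to contrast against; I will only note that your key observation --- the factorization collapses because $\sum_j(a_j-c_j)=d-|\cc|$, so the color classes decouple --- is exactly the point, and the final binomial $\binom{d-k}{i-k}=\binom{d-k}{d-i}$ matches (\ref{f by h}) precisely. No gaps.
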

 
  If $\Delta$ is completely balanced and we have identified $\bb \le (1,\dots, 1)$ with subsets of $[d]$ as above, then the collections $f_S$ and $h_S, S \subseteq [d],$ are called the {\it flag} $f$-vector and {\it flag} $h$-vector respectively. Here $f_S$ is the number of faces $\rho$ such that the image of the vertices of $\rho$ under $\phi$ is $S.$  In this case  Equation (\ref{bal h}) becomes
 
 \begin{equation} \label{comp bal h}
    h_S = \displaystyle\sum_{T \subseteq S} (-1)^{|S-T|} f_S.
  \end{equation}  
    
  An equivalent way to define  $h$ and $h_\bb$ is through the  face ring.  Let $k$ be a field and set $R = k[x_1,\dots,x_n].$  
  
\begin{defn}
  The {\bf face ring} (also known as the {\bf Stanley-Reisner ring}) of $\Delta$ is $k[\Delta] = R/I_\Delta,$ where
  
  $$I_\Delta = <\{x_{i_1}, \dots, x_{i_k}: \{v_{i_1}, \dots, v_{i_k}\} \notin \Delta \}>.$$
\end{defn}  
    
   The Hilbert function of $k[\Delta]$ encodes the $h$-vector of $\Delta$ is a nice way. Let $k[\Delta]_i$ be the degree $i$ component of $k[\Delta].$  Define $$F(\Delta,\lambda) = \displaystyle\sum^\infty_{i=0} \dim_k k[\Delta]_i \ \lambda^i.$$
   
   \begin{thm} (See, for instance, \cite[II.2]{St} )
 $$ 
     F(\Delta,\lambda) = \displaystyle\sum^d_{i=0} \frac{h_i \lambda^i}{(1-\lambda)^d}.
$$
   \end{thm}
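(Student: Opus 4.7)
The plan is to compute the Hilbert series by finding an explicit monomial $k$-basis for $k[\Delta]$, then rearrange the resulting rational function into the desired form using the defining relation $h_\Delta(x+1)=f_\Delta(x)$.

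First I would observe that since $I_\Delta$ is a monomial ideal, a monomial $x_1^{c_1}\cdots x_n^{c_n}\in R$ lies in $I_\Delta$ if and only if its support $\{v_i : c_i>0\}$ fails to be a face of $\Delta$. Consequently the residues of the monomials whose support \emph{is} a face of $\Delta$ form a $k$-basis of $k[\Delta]$. Grouping this basis by support, for each face $\sigma\in\Delta$ with $|\sigma|=j\ge 1$ the monomials on $\sigma$ of total degree $i$ number $\binom{i-1}{j-1}$ (compositions of $i$ into $j$ positive parts), while the empty face contributes the single element $1$ in degree $0$.

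Next I would sum the resulting generating functions. For fixed $j\ge 1$, $\sum_{i\ge j}\binom{i-1}{j-1}\lambda^i = \lambda^j/(1-\lambda)^j$, and together with the contribution $1=\lambda^0/(1-\lambda)^0$ from $\emptyset$ (with $f_{-1}=1$) this yields
\begin{equation*}
F(\Delta,\lambda) \;=\; \sum_{j=0}^{d} f_{j-1}\,\frac{\lambda^j}{(1-\lambda)^j}
\;=\; \frac{1}{(1-\lambda)^d}\sum_{j=0}^{d} f_{j-1}\,\lambda^j(1-\lambda)^{d-j}.
\end{equation*}

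Finally I would identify the numerator with $\sum h_i\lambda^i$. Setting $t=\lambda/(1-\lambda)$ (so $\lambda=t/(1+t)$ and $1-\lambda=1/(1+t)$), each summand becomes $f_{j-1}t^j/(1+t)^d$, so the numerator equals $t^d f_\Delta(1/t)/(1+t)^d$. Applying $f_\Delta(1/t)=h_\Delta(1/t+1)=h_\Delta((1+t)/t)$ and expanding $h_\Delta$ converts this to $\sum_i h_i\,t^i/(1+t)^i = \sum_i h_i\lambda^i$, as required.

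There is no real obstacle here; the only subtlety is making sure the empty face is handled with the convention $f_{-1}=1$ so that the $j=0$ term contributes correctly, and keeping track of which polynomial is evaluated at which shifted variable when translating between $f_\Delta$ and $h_\Delta$.
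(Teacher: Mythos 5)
Your argument is correct and complete; the paper itself gives no proof, merely citing Stanley's \emph{Combinatorics and Commutative Algebra} [II.2], and your proof is precisely the standard one presented there (monomial $k$-basis indexed by faces, summing $\sum_{i\geq j}\binom{i-1}{j-1}\lambda^i=\lambda^j/(1-\lambda)^j$ over faces, then rewriting the numerator via the substitution $\lambda=t/(1+t)$ and the relation $f_\Delta(x)=h_\Delta(x+1)$). All the bookkeeping checks out, including the empty-face convention $f_{-1}=1$.
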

  
  When $\Delta$ is balanced, $k[\Delta]$ has a natural  $\N^m$ grading by assigning $x_i$ to $\lambda_{\phi(v_i)}.$ 
  For instance, let $\Delta$ be the boundary of a bipyramid over a pentagon in $\R^3$ as in Figure \ref{bipyramid}.   With $\phi$ as given, $\Delta$ is a balanced complex of type $\{1,2\}.$  The fine Hilbert function for $k[\Delta]$ is
  
  $$  1 + 2\lambda_1 +5 \lambda_2  + 2 \lambda^2_1 + 10 \lambda^2_2  + 10 \lambda_1 \lambda_2  + 2 \lambda^3_1 + 10 \lambda^2_1 \lambda_2 + 20 \lambda_1 \lambda^2_2 + 15 \lambda^3_2 + \dots.$$ 
  
  \begin{figure} 
 \scalebox{0.70}[0.70]{\includegraphics{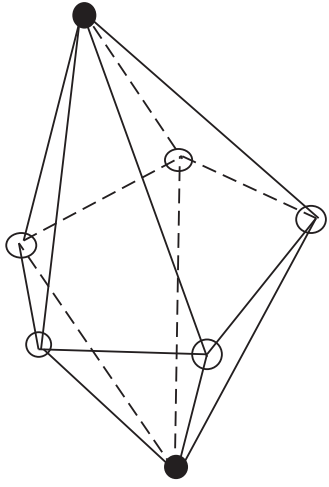}}
  \caption{Balanced bipyramid, $\phi(\bullet) = 1, \phi({\circ})=2.$} \label{bipyramid}
\end{figure}

    \begin{thm} \cite{St5}
      Let $\Delta$ be a balanced complex of type $\aa = (a_1,\dots,a_m).$  Then
      
      \begin{equation} \label{fine hilbert}
      F(k[\Delta],\lambda) =\prod^m_{j=1}  \frac{1}{(1-\lambda_j)^{a_j}} \displaystyle\sum_{\bb \le \aa} h_\bb \lambda^\bb.
      \end{equation}
      
  \end{thm}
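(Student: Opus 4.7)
The plan is to compute $F(k[\Delta],\lambda)$ directly by decomposing $k[\Delta]$ as a $k$-vector space according to the support of monomials, then rewrite the resulting generating function in the desired form.

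First I would observe that $k[\Delta]$ has a $k$-basis consisting of monomials $x_{i_1}^{e_1}\cdots x_{i_k}^{e_k}$ with $e_j \ge 1$ and $\{v_{i_1},\dots,v_{i_k}\}\in\Delta$, together with the constant $1$. Grouping these basis elements by their support $\rho \in \Delta$, and noting that each variable $x_i$ carries $\N^m$-degree $e_{\phi(v_i)}$ (the $\phi(v_i)$-th standard basis vector), the monomials supported exactly on $\rho$ contribute
\begin{equation*}
\prod_{v_i \in \rho} \frac{\lambda_{\phi(v_i)}}{1-\lambda_{\phi(v_i)}}
\end{equation*}
to the fine Hilbert series. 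If $\rho$ has type $\cc$, meaning $|\rho\cap \phi^{-1}(j)|=c_j$ for each $j$, this contribution only depends on $\cc$, so summing over all faces gives
\begin{equation*}
F(k[\Delta],\lambda) \;=\; \sum_{\cc\le \aa} f_\cc \prod_{j=1}^m \left(\frac{\lambda_j}{1-\lambda_j}\right)^{c_j}.
\end{equation*}

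Next I would clear denominators by multiplying through by $\prod_j (1-\lambda_j)^{a_j}$, obtaining on the right a sum of terms $f_\cc \prod_j \lambda_j^{c_j}(1-\lambda_j)^{a_j-c_j}$. Expanding each $(1-\lambda_j)^{a_j-c_j}$ by the binomial theorem and reindexing via $b_j = c_j + d_j$ converts this into $\sum_{\cc\le\aa} f_\cc \sum_{\cc\le\bb\le\aa}\prod_j (-1)^{b_j-c_j}\binom{a_j-c_j}{b_j-c_j}\lambda_j^{b_j}$. Swapping the order of summation, the coefficient of $\lambda^\bb$ becomes exactly
\begin{equation*}
\sum_{\cc\le\bb} f_\cc \prod_{j=1}^m (-1)^{b_j-c_j}\binom{a_j-c_j}{b_j-c_j},
\end{equation*}
which is the definition of $h_\bb$ from (\ref{bal h}). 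Dividing back by $\prod_j(1-\lambda_j)^{a_j}$ yields the stated formula.

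There is no real obstacle here; the argument is a bookkeeping exercise combining the monomial basis of $k[\Delta]$ with the definition of $h_\bb$. The only place that requires attention is the index bookkeeping when substituting $b_j = c_j + d_j$ and swapping the two sums, together with checking that the empty face contributes $1$ (corresponding to $\cc = \mathbf{0}$), so the identity holds on the nose rather than up to a correction term.
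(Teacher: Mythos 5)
Your proof is correct and is the standard argument (it is essentially Stanley's original proof from the cited reference; the paper itself invokes the theorem without reproducing a proof). The support decomposition you use, writing the fine Hilbert series as $\sum_{\rho\in\Delta}\prod_{v_i\in\rho}\lambda_{\phi(v_i)}/(1-\lambda_{\phi(v_i)})$, is precisely the same device the paper later appeals to (via {[St, Corollary 7.2]}) inside the proof of Theorem~\ref{fine DS thm}, so your argument is fully consistent with the paper's machinery; the remaining steps are exactly the binomial reindexing that turns the face count $f_\cc$ into $h_\bb$ via definition~(\ref{bal h}).
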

  
  As usual $\lambda^\bb = \lambda^{b_1}_1 \cdots \lambda^{b_m}_m.$   When $\bb \le \aa,$ denote by $\aa - \bb$ the $m$-tuple $(a_1 - b_1, \dots, a_m -b_m).$
  For the special case of balanced Eulerian complexes, the following theorem was first stated in \cite{BMi}.
  
  \begin{thm}  \label{fine DS thm}
    If $\Delta$ is a balanced semi-Eulerian complex of type $\aa,$ then for all $\bb \le \aa,$
    
    \begin{equation} \label{fine DS}
    h_{\aa - \bb} - h_\bb = (-1)^{|b|} [\chi(\Delta) - \chi(S^{d-1})] \prod^m_{j=1} \binom{a_j}{b_j}.
    \end{equation}
  
  \end{thm}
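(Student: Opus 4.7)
The approach is to rephrase (\ref{fine DS}) as a single compact identity for the full multivariate generating function $H(\lambda) := \sum_\bb h_\bb \lambda^\bb$, then verify that identity face-by-face using the semi-Eulerian condition on all links. From the definition (\ref{bal h}), a direct computation gives
$$H(\lambda) = \sum_{\sigma \in \Delta \cup \{\emptyset\}}\prod_{j=1}^m \lambda_j^{c_j(\sigma)}(1-\lambda_j)^{a_j - c_j(\sigma)},$$
where $c_j(\sigma) := |\sigma \cap V_j|$. Writing $\lambda^{\aa} := \prod_j \lambda_j^{a_j}$, the substitution $\lambda_j \mapsto \lambda_j^{-1}$ yields $\lambda^{\aa} H(\lambda^{-1}) = \sum_\sigma (-1)^{d-|\sigma|}\prod_j (1-\lambda_j)^{a_j - c_j(\sigma)}$. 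The desired conclusion (\ref{fine DS}) is equivalent to the single polynomial identity
$$\lambda^{\aa} H(\lambda^{-1}) - H(\lambda) \;=\; (\chi(\Delta) - \chi(S^{d-1}))\prod_{j=1}^m (1-\lambda_j)^{a_j}, \qquad (\star)$$
since matching the coefficient of $\lambda^\bb$ on each side (using $\prod_j(1-\lambda_j)^{a_j} = \sum_\bb (-1)^{|\bb|}\prod_j \binom{a_j}{b_j}\lambda^\bb$) recovers the stated formula.

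To verify $(\star)$, divide through by $\prod_j(1-\lambda_j)^{a_j}$ and substitute $t_j := \lambda_j/(1-\lambda_j)$, so that $(1-\lambda_j)^{-1} = 1 + t_j$. The identity reduces to
$$(-1)^d \sum_{\sigma}(-1)^{|\sigma|}\prod_j(1+t_j)^{c_j(\sigma)} \;-\; \sum_\sigma \prod_j t_j^{c_j(\sigma)} \;=\; \chi(\Delta) - \chi(S^{d-1}).$$
Expanding $\prod_j(1+t_j)^{c_j(\sigma)} = \sum_{\tau \subseteq \sigma}\prod_j t_j^{c_j(\tau)}$ and interchanging the order of summation, the inner sum for each fixed $\tau$ is
$$\sum_{\sigma \supseteq \tau}(-1)^{|\sigma|} \;=\; (-1)^{|\tau|}(1 - \chi(\lk \tau)),$$
which by the semi-Eulerian property $\chi(\lk \tau) = 1 + (-1)^{d-|\tau|-1}$ equals $(-1)^d$ for $\tau \neq \emptyset$, and equals $1 - \chi(\Delta)$ for $\tau = \emptyset$. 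The contributions from $\tau \neq \emptyset$ combine with the second sum in the displayed identity to produce the single remaining term $(-1)^d(1-\chi(\Delta)) - 1$, which must match $\chi(\Delta) - \chi(S^{d-1}) = \chi(\Delta) - 1 + (-1)^d$.

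The principal technical obstacle is this final matching, together with the careful bookkeeping of signs and the special handling of the empty face during the swap of summation. Reconciling the two expressions amounts to the parity identity $\chi(\Delta)(1+(-1)^d) = 0$, which is vacuous for odd $d$. For even $d$ (equivalently, $\Delta$ odd-dimensional) it follows from the remark made immediately after Klee's equations: setting $i = d/2$ in (\ref{klee}) forces $\chi(\Delta) = 0$ in that case. Once $(\star)$ is established, extracting the coefficient of $\lambda^\bb$ yields (\ref{fine DS}).
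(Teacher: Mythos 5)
Your argument is correct and is in essence the same reciprocity computation the paper carries out via Stanley's Hilbert-series reciprocity formula \cite[Corollary 7.2]{St}: in both, one compares $\lambda^\aa H(\lambda^{-1})$ with $H(\lambda)$ by expanding the generating function as a sum over faces and letting the semi-Eulerian hypothesis enter through $\chi(\lk\tau) = \chi(S^{d-|\tau|-1})$ for each nonempty face $\tau$, so the two proofs coincide in substance even though you inline the face-sum identity rather than citing it. Your write-up has the merit of surfacing the residual parity constraint $\chi(\Delta)(1+(-1)^d)=0$, which the paper's final step $(-1)^{d-1}\tilde{\chi}(\Delta)-1=\chi(\Delta)-\chi(S^{d-1})$ uses silently, and you correctly discharge it via the remark following Klee's equations.
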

  
  \begin{proof}
    The strategy of the proof is not new and follows the ideas of  \cite[II.7]{St}.  We compute the fine Hilbert function $F(k[\Delta],1/\lambda)$ in two different ways.  From equation (\ref{fine hilbert}),
    
    $$\begin{array}{ccl}
     F(k[\Delta],1/\lambda) & = &\displaystyle\prod^m_{j=1}  \frac{1}{(1-1/\lambda_j)^{a_j}} \displaystyle\sum_{\bb \le \aa} h_\bb /\lambda^\bb \\
     & =  & \displaystyle\prod^m_{j=1}  \frac{1}{(\lambda_j-1)^{a_j}} \displaystyle\sum_{\bb \le \aa} h_\bb \lambda^{\aa-\bb} \\ 
     & = & (-1)^d \displaystyle\prod^m_{j=1}  \frac{1}{(1-\lambda_j)^{a_j}} \displaystyle\sum_{\bb \le \aa} h_\bb \lambda^{\aa-\bb}.
     \end{array}$$
     
    For a face $\rho,$ define $\rho(l) = |v \in \rho: \phi(v) = l|.$ By \cite[Corollary 7.2]{St},
     
      $$\begin{array}{ccl}
  (-1)^d   F(k[\Delta],1/\lambda) & = & (-1)^{d-1} \tilde{\chi}(\Delta) + \displaystyle\sum_{\rho \neq \emptyset} \displaystyle\prod_{v_i \in \rho} \frac{\lambda_{\phi(v_i)}}{1 - \lambda_{\phi(v_i)}} \\
    &  =  & (-1)^{d-1} \tilde{\chi}(\Delta) + \\
    & & \displaystyle\prod^m_{j=1}  \frac{1}{(1-\lambda_j)^{a_j}} \displaystyle\sum_{\rho \neq \emptyset} \displaystyle\prod_{v_i \in \rho} \lambda_{\phi(v_i)} \displaystyle\prod^m_{l=1} (1 - \lambda_l)^{a_l-\rho(l)}  \\ 
     &  =  & (-1)^{d-1} \tilde{\chi}(\Delta) + \\
     & &  \displaystyle\prod^m_{j=1}  \frac{1}{(1-\lambda_j)^{a_j}} \displaystyle\sum_{\bb \leq \aa} \displaystyle\sum_{\stackrel{\cc \leq \bb}{|\cc| \neq 0}}  (-1)^{|\bb-\cc|} f_\cc \displaystyle\prod^m_{l=1} \binom{a_l - c_l}{b_l-c_l} \lambda^\bb \\
     & = & (-1)^{d-1} \tilde{\chi}(\Delta) +\\
    & & \displaystyle\prod^m_{j=1}  \frac{1}{(1-\lambda_j)^{a_j}} \left\{ \displaystyle\sum_{\bb \leq \aa} h_\bb - (-1)^{|\bb|} \displaystyle\prod^m_{l=1} \binom{a_l}{b_l} \right\} \lambda^\bb.
     \end{array}$$

Multiplying both equations by  $ \displaystyle\prod^m_{j=1}  \frac{1}{(1-\lambda_j)^{a_j}}$ leaves

$$\displaystyle\sum_{\bb \le \aa} h_\bb \lambda^{\aa-\bb} = \displaystyle\sum_{\bb \le \aa} \left\{ h_\bb + (-1)^{|\bb|} [(-1)^{d-1} \tilde{\chi}(\Delta) -1] \displaystyle\prod^m_{j=1} \binom{a_j}{b_j} \right\} \lambda^\bb.$$

Since $(-1)^{d-1} \tilde{\chi}(\Delta) -1 = \chi(\Delta) - \chi(S^{d-1}),$ comparing the coefficients of $\lambda^\bb$ finishes the proof.
    
      \end{proof}
  
 As far as we know, the only other place that semi-Eulerian balanced (as opposed to completely balanced) complexes are considered is Magurn \cite{Mag}, where balanced compact 2-manifolds are analyzed.  Equation (\ref{fine DS}) for completely balanced semi-Eulerian posets appears in \cite[Proposition 2.2]{St9}. Balanced complexes of type $\aa=(d)$ are just pure complexes, and in this case (\ref{fine DS}) recovers the generalized Dehn-Sommerville equations.
 
 \begin{cor}
   If $\Delta$ is a completely balanced semi-Eulerian complex, then
   
   \begin{equation} \label{flag h DS}
   h_{[d]-S} - h_S = (-1)^{|S|} [\chi(\Delta) - \chi(S^{d-1})].
   \end{equation}
 \end{cor}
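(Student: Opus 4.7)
The plan is to obtain this statement as the special case $\aa = (1,\dots,1)$ of Theorem~\ref{fine DS thm}, which has already been established in the preceding argument. So the work is essentially a translation of notation.

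First I would recall that in a completely balanced complex the type is $\aa = (1,\dots,1)$ with $m = d$, so sequences $\bb \le \aa$ are exactly $\{0,1\}$-vectors of length $d$. Under the stated identification $\bb \leftrightarrow S = \{i : b_i = 1\}$, we have $|\bb| = |S|$, the complementary sequence $\aa - \bb$ corresponds to the complementary set $[d] - S$, and the refined face numbers and $h$-numbers match: $f_\bb = f_S$ and $h_\bb = h_S$ (with the latter equal to $\sum_{T \subseteq S} (-1)^{|S-T|} f_T$ as in (\ref{comp bal h})).

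Next I would substitute into Equation~(\ref{fine DS}). The only thing to check is that the product of binomial coefficients collapses: since each $a_j = 1$ and each $b_j \in \{0,1\}$,
\begin{equation*}
\prod_{j=1}^{d} \binom{a_j}{b_j} = \prod_{j=1}^{d} \binom{1}{b_j} = 1.
\end{equation*}
Therefore Theorem~\ref{fine DS thm} specializes to
\begin{equation*}
h_{[d]-S} - h_S = (-1)^{|S|}\,[\chi(\Delta) - \chi(S^{d-1})],
\end{equation*}
which is exactly (\ref{flag h DS}).

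There is no real obstacle here; the content is entirely in Theorem~\ref{fine DS thm}. The only thing one must be careful about is verifying that the dictionary between sequences $\bb \le (1,\dots,1)$ and subsets $S \subseteq [d]$ correctly identifies $h_\bb$ with the flag $h$-number $h_S$ defined by (\ref{comp bal h}), but this is immediate from Equation~(\ref{bal h}) once one notes that the binomial factors in that equation also collapse to $1$ in the completely balanced case.
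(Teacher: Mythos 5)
Your proof is correct and is exactly the specialization the paper intends (the paper gives no separate argument, treating the corollary as immediate from Theorem~\ref{fine DS thm} by taking $\aa = (1,\dots,1)$). You correctly observe that the binomial product $\prod_j \binom{1}{b_j}$ collapses to $1$ and that the dictionary $\bb \leftrightarrow S$ carries $|\bb|$ to $|S|$ and $\aa - \bb$ to $[d]-S$.
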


 For Eulerian complexes the  relations in the above corollary are also called the generalized Dehn-Sommerville equations. For the history of these equations see the discussion in \cite{BBa}.

 Let $H_E(d)$ be the  affine span of fine $h$-vectors of  balanced $(d-1)$-dimensional Eulerian  complexes of type $\aa$.  Billera and Magurn determined the dimension of $H_E(d)$ in \cite{BMi}.  Their answer was in terms of the number of $\bb \le \aa, n(\aa) = \displaystyle\prod^m_{j=1}  (a_j + 1).$  Equation (\ref{fine DS}) allows us to extend their result to semi-Eulerian complexes.  
  
    \begin{thm} \label{fine affine span}
     Let $\Delta$ be a  semi-Eulerian complex.  Fix $\aa, |\aa|=d.$  Let $H_\Delta$ be the affine span of $\{h_\bb(\Delta^\prime)\}$, where $\Delta^\prime$ ranges over all balanced complexes  of type $\aa$ homeomorphic to $\Delta.$ Then 
     \begin{equation} \label{affine span fine DS}
     \dim H_\Delta =
          \begin{cases}
         	\frac{1}{2} (n(\aa)-1) 	& \mbox{ if every } a_i \mbox{ is even, } \\
	      	\frac{1}{2} (n(\aa)-2)	& \mbox{ otherwise. } 
	\end{cases}
     \end{equation}
	
     \end{thm}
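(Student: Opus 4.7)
The plan is to establish matching upper and lower bounds on $\dim H_\Delta$.

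\emph{Upper bound.} I would apply Theorem \ref{fine DS thm} together with the trivial normalization $h_0 = 1$ (which holds since $f_{\emptyset}=1$). The involution $\bb \mapsto \aa - \bb$ acts on the $n(\aa)$ coordinates of the fine $h$-vector, and (\ref{fine DS}) yields one affine constraint per orbit. When some $a_i$ is odd this involution has no fixed point, so the $n(\aa)/2$ instances of (\ref{fine DS}) are linearly independent; combined with $h_0 = 1$ they cut $\dim H_\Delta$ down to $n(\aa)/2 - 1 = (n(\aa)-2)/2$. When every $a_i$ is even, $d$ is even and so Klee's equation (\ref{klee}) at $i=d/2$ forces $\chi(\Delta) = \chi(S^{d-1})$; the unique self-paired index $\bb = \aa/2$ then turns (\ref{fine DS}) into the trivial $0=0$, leaving $(n(\aa)-1)/2$ independent homogeneous relations that together with $h_0=1$ give dimension at most $(n(\aa)-1)/2$.

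\emph{Lower bound.} I would fix a balanced triangulation $\Gamma_0$ of $\Delta$ of type $\aa$, which exists by specializing the order complex of the face poset of any triangulation of $\Delta$, and then vary over balanced connected sums $\Gamma_0 \# \Sigma$ as $\Sigma$ ranges over balanced Eulerian spheres of type $\aa$. Any two facets in a balanced complex of type $\aa$ share the colour profile $(a_1,\dots,a_m)$, so a colour-respecting bijection between chosen facets exists; the connected sum therefore stays balanced of type $\aa$, and since $\Sigma$ is a sphere the result remains homeomorphic to $\Delta$. A direct inclusion-exclusion — using $f_\cc(\Gamma_0 \# \Sigma) = f_\cc(\Gamma_0) + f_\cc(\Sigma) - \prod_j \binom{a_j}{c_j}$ for $\cc < \aa$, together with the telescoping identity $\binom{a_j}{c_j}\binom{a_j-c_j}{b_j-c_j} = \binom{a_j}{b_j}\binom{b_j}{c_j}$ — should give
\[
h_\bb(\Gamma_0 \# \Sigma) = h_\bb(\Gamma_0) + h_\bb(\Sigma) \quad \text{for } 0 < |\bb| < d,
\]
while $h_0 = h_\aa = 1$ throughout. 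Thus the affine span of $\{h(\Gamma_0 \# \Sigma)\}$ is a translate of the projection of the affine span of $\{h(\Sigma)\}$ onto the interior coordinates, and since $h_0(\Sigma)=h_\aa(\Sigma)=1$ are fixed that projection is an affine isomorphism. By the Billera--Magurn theorem \cite{BMi} the target therefore has exactly the dimension demanded by (\ref{affine span fine DS}), matching the upper bound.

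The main obstacle will be verifying that balanced Eulerian \emph{spheres} of type $\aa$ — not merely general balanced Eulerian complexes of that type — already attain the full dimension of $H_E(d)$. This is what allows the topology of $\Delta$ to be preserved under the connected-sum step, and it is not formally contained in Billera--Magurn's statement; I expect it to follow from examining their constructions, which typically stay within the class of boundaries of simplicial polytopes, but confirming that no dimension is lost under this restriction to spheres is the delicate technical point of the argument.
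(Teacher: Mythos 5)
Your proposal is correct and follows essentially the same route as the paper: the upper bound comes from counting the independent affine constraints supplied by Theorem \ref{fine DS thm} together with $h_{(0,\dots,0)}=1$, and the lower bound from forming balanced connected sums $\Gamma_0 \# \Sigma$ with the Billera--Magurn balanced spheres and noting that this translates the interior coordinates of the fine $h$-vector. One small inaccuracy: $h_\aa(\Gamma_0 \# \Sigma) = h_\aa(\Gamma_0)$, which need not equal $1$ unless $\Delta$ is Eulerian, though this is harmless since all that is used is that $h_0$ and $h_\aa$ are constant over the family; and the point you flag about the Billera--Magurn constructions being genuine balanced spheres is resolved in the paper by the same appeal to \cite[Section 5]{BMi}, so your argument rests on the same foundation.
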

     
     \begin{proof}
        If $\bb \le \aa,$ then $\bb \neq \aa - \bb$ unless each $a_i$ is even and $b_i = a_i/2$ for every $i.$ Also, $h_{\{0,\dots,0\}} = 1$ for any balanced complex.  Hence, Theorem \ref{fine DS thm} implies that $H_\Delta$ satisfies $1+ \frac{n(\aa)-1}{2}$ linearly independent equations if every $a_i$ is even, and $1+\frac{n(\aa)}{2}$  otherwise. Therefore, the dimension of $H_\Delta$ is bounded above by the right-hand side of (\ref{affine span fine DS}). 
        
         In order to prove the opposite inequality, we first  construct the requisite number of balanced spheres of type $\aa$ whose fine $h$-vectors affinely span $H_\Delta$   for $\Delta = S^{d-1}.$   This is accomplished in \cite[Section 5]{BMi}.  Denote by $\{\PP^\cc\}_{\cc \in \mathcal{C}}$ the corresponding collection of balanced spheres.

     Now let $\Delta$ be an arbitrary semi-Eulerian complex.  As noted before, there exists $\Delta^\prime$ homeomorphic to $\Delta$ with $\Delta^\prime$ a balanced complex of type $\aa.$  
  For  $\cc \in \mathcal{C}, \bb \neq (0, \dots, 0), \bb \neq \aa, h_\bb(\Delta^\prime \#  \PP^\cc) = h_\bb(\Delta^\prime) + h_\bb( \PP^\cc),$ where the connected sum identifies vertices so that the resulting complex is still balanced of type $\aa.$     In addition, $h_{\{0,\dots,0\}}(\Delta^\prime \# \PP^\cc) = 1$ and $h_\aa(\Delta^\prime \#  \PP^\cc) = h_\aa(\Delta^\prime).$  So, the affine span of $\{h_\bb(\Delta^\prime \#  \PP^\cc)\}_{\cc \in \mathcal{C}}$ is a translation of the affine span of $\{h_\bb( \PP^\cc)\}_{\cc \in \mathcal{C}}$ and hence has the same dimension.
     \end{proof}

     When $\Delta$ is the order complex of a poset there are further restrictions on the flag $h$-vector of $\Delta.$ A finite graded poset $P$ with a least element $\hat{0}$ and greatest element $\hat{1}$   is semi-Eulerian if $\mu(x,y) = (-1)^{\rk(x) - \rk(y)}$ for all $x \le y, (x,y) \neq (\hat{0},\hat{1}).$  Equivalently, the reduced order complex of $P$ is a semi-Eulerian complex.  If in addition, $\mu(\hat{0},\hat{1}) = (-1)^{\rk(P)},$ then we say $P$ is {\it Eulerian.}
     
    Let $F_E(d)$ be the affine span of flag $f$-vectors of reduced order complexes of rank $d$ Eulerian posets. Bayer and Billera determined $F_E(d)$ explicitly \cite{BBa}.  For the purposes of stating the linear equations satisfied by the elements of $F_E(d),$ we temporarily extend the definition of $f_S$ to subsets $S \subseteq \{0,1,\dots,d-1\}.$   If $0 \in S,$ then define $f_S = f_{S-\{0\}}.$      
     
     \begin{thm} \cite{BBa}
       Let $P$ be an Eulerian poset of rank $d,$  let $\Delta$ be the order complex of $P$ and let $S \subseteq [d-1].$ If $\{i,k\} \subseteq S \cup \{-1,d\}, i<k-1,$ and $S$ contains no $j$ such that $i<j<k,$ then
       \begin{equation} \label{cd inequalities}
         \displaystyle\sum^{k-1}_{j=i+1} (-1)^{j-i-1} f_{S \cup j} = f_S (1-(-1)^{k-i-1}).
       \end{equation}
         \end{thm}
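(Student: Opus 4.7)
The plan is to derive each relation by localizing to the interval between the two flanking ranks in a chain of type $S$ and exploiting that every closed interval of an Eulerian poset is itself Eulerian.

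First, I would fix a chain $\sigma$ of type $S$ and let $x,y$ denote its elements of ranks $i,k$, with the convention that $x=\hat{0}$ (respectively $y=\hat{1}$) when $i$ (respectively $k$) plays the role of a boundary marker. Since $S\cap(i,k)=\emptyset$, the chains of type $S\cup\{j\}$ refining $\sigma$, for $i<j<k$, are in bijection with the rank-$j$ elements of the open interval $(x,y)$. Setting $a_\ell(\sigma)=|\{z\in(x,y):\rk z=i+\ell\}|$, one obtains
\[
f_{S\cup\{j\}}=\sum_\sigma a_{j-i}(\sigma),
\]
with the outer sum over chains $\sigma$ of type $S$.

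Second, because $P$ is Eulerian, the closed interval $[x,y]$ is Eulerian of rank $k-i$, so $\mu(x,z)=(-1)^{\rk z-\rk x}$ for every $z\in[x,y]$. The defining identity $\sum_{z\in[x,y]}\mu(x,z)=0$ (valid since $x\ne y$) therefore reads, after grouping terms by rank,
\[
1+\sum_{\ell=1}^{k-i-1}(-1)^\ell a_\ell(\sigma)+(-1)^{k-i}=0,
\]
which rearranges to the local identity
\[
\sum_{\ell=1}^{k-i-1}(-1)^{\ell-1}a_\ell(\sigma)=1-(-1)^{k-i-1}.
\]

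Third, summing this local identity over all chains $\sigma$ of type $S$, reindexing $\ell=j-i$, and using the decomposition of step one gives
\[
\sum_{j=i+1}^{k-1}(-1)^{j-i-1}f_{S\cup\{j\}}=(1-(-1)^{k-i-1})f_S,
\]
as desired.

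The principal ingredient is the hereditary property that every interval of an Eulerian poset is Eulerian, which reduces the global identity to the one-line consequence $\sum_z\mu(x,z)=0$ applied interval by interval. The only delicate point is bookkeeping in the boundary cases where $i$ or $k$ corresponds to $\hat{0}$ or $\hat{1}$, handled via the convention $f_{S\cup\{0\}}=f_S$ from the excerpt.
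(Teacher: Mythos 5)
Your argument is the standard one, and is essentially Bayer and Billera's original proof; the paper itself offers no proof of this theorem but simply cites \cite{BBa}, so there is nothing internal to compare against. The core mechanism is exactly right: localize to the closed interval $[x,y]$ spanned by consecutive ranks of $S\cup\{\text{boundary markers}\}$ in a fixed chain $\sigma$ of type $S$; use that every such interval of an Eulerian poset is itself Eulerian, so $\mu(x,z)=(-1)^{\rk z-\rk x}$ for all $z\in[x,y]$; turn the M\"obius recursion $\sum_{z\in[x,y]}\mu(x,z)=0$ into the rank-alternating Euler relation; and sum over all chains $\sigma$ of type $S$. Your reindexing $\ell=j-i$ and the passage from the local identity to the global one are correct whenever $\rk x=i$ and $\rk y=k$ hold literally, which covers all interior cases and the upper marker $k=d$ (since $\rk\hat1=d$).

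The delicate boundary point you flag at the end is not, however, actually repaired by the convention $f_{S\cup\{0\}}=f_S$; it is repaired by never invoking it. If you take the statement literally with $i=-1$ and set $x=\hat0$, then $\rk x=0\neq i$, so your claim that $[x,y]$ is Eulerian of rank $k-i$ is off by one, the exponent $k-i$ in your Euler relation is off by one, and your first bijection fails at $j=0$: you would need $f_{S\cup\{0\}}=\sum_\sigma a_1(\sigma)$, but $a_1(\sigma)$ as you defined it counts elements of $(x,y)$ of rank $i+1=0$, which is empty, whereas $f_{S\cup\{0\}}=f_S$ by the stated convention. The clean fix is to take the lower marker to be $0$ rather than $-1$ (matching $\rk\hat0=0$, and matching Bayer and Billera's own indexing, where for a rank-$(n+1)$ poset with $S\subseteq[n]$ the markers are $0$ and $n+1$); then the sum $\sum_{j=i+1}^{k-1}$ begins at $j=1$, the extension of $f$ to sets containing $0$ is never touched, and your proof needs no special casing. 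A quick sanity check with the face lattice of a $1$-simplex ($d=2$, $S=\emptyset$, $f_\emptyset=1$, $f_{\{1\}}=2$) confirms that the relation holds with markers $\{0,d\}$ and fails with $\{-1,d\}$, so you should state your proof for the $\{0,d\}$ version rather than trying to force the bookkeeping into the form printed in the excerpt.
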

         
     Bayer and Billera proved that the affine span of the set of flag $f$-vectors which satisfy (\ref{cd inequalities})   has dimension $e_d -1, $ where $e_d$ is the $d$-th Fibonacci number.  Then they constructed a family, $\PP^d,$ of polytopes whose flag $f$-vectors were affinely independent with $|\PP^d| = e_d, $ thus proving that $F_E(d)$ consists of  all $\{f_S\}$ which satisfy (\ref{cd inequalities}).
     
    J.  Fine gave a basis for $F_E(d)$ which we now describe.  The coefficients with respect to this basis have come to be known as the $\cc \dd$-index of $P.$  Encode the flag $h$-vector of $P$ (or more accurately, of the reduced order complex of $P$) as a polynomial $h_P(\aa,\bb)$ in noncommuting variables $a$ and $b$ (not to be confused with the indices in the previous section) by
     
     $$S \leftrightarrow \begin{cases} \aa,& i \notin S \\ \bb, & i \in S. \end{cases}$$
\noindent For instance, if $P$ is the face poset of the bipyramid in Figure \ref{bipyramid}, then 
$$h_P(\aa,\bb) = \aa \aa \aa + 6 \bb \aa \aa + 14 \aa \bb \aa + 9 \aa \aa \bb + 6 \aa \bb \bb + 14 \bb \aa \bb + 9 \bb \bb \aa + \bb \bb \bb.$$ 

Now let  $\cc = \aa+\bb$ and $\dd = \aa \bb + \bb \aa.$ In the above example, $h_P(\aa, \bb) = \cc \cc \cc + 5 \dd \cc + 8 \cc \dd.$  Let $F(\cc,\dd)$ be the linear subspace spanned by all monomials in $\cc$ and $\dd$ of degree  $d-1$ other than $\cc^{d-1},$ where the degree of $\cc$ is one and the degree of $\dd$ is two. Bayer and Klapper proved that $F_E(d) = \cc^{d-1} + F(\cc,\dd)$ \cite[Theorem 4]{BK}.

The results for flag $f$-vectors of semi-Eulerian posets are similar.  Suppose $P$ is a rank $d$ semi-Eulerian poset and let $\Delta_P$ be the reduced order complex of $P.$ Note that the dimension of $\Delta_P$ is $d-2.$   In order to describe $F_P,$ the affine span of flag $f$-vectors of posets whose order complexes are homeomorphic to $\Delta_P,$ set $X = \chi(\Delta_P) - \chi(S^{d-2}).$  

\begin{thm}  \label{semi cd}
  Let $f^X(d)$ be the flag $f$-vector which is zero for all $S \subseteq [d-1]$ except $f_{\{d-1\}} = X.$  Then $ F_P = F_E(d) + f^X(d).$
\end{thm}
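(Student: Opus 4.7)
The plan is to establish the equality $F_P = F_E(d) + f^X(d)$ via two inclusions, after a preliminary reduction to the nontrivial case. Apply the generalized Dehn--Sommerville identity (\ref{klee}) to $\Delta_P$ (which has dimension $d-2$) with $i = (d-1)/2$: when $d$ is odd, this forces $X = 0$ and $f^X(d) = 0$, so every poset whose reduced order complex is homeomorphic to $\Delta_P$ is already Eulerian, and the statement reduces to the already-known equality $F_P = F_E(d)$. I may therefore assume $d$ is even.

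For the inclusion $F_P \subseteq F_E(d) + f^X(d)$, I would group the Bayer--Billera equations (\ref{cd inequalities}) by their ``gap interval'' $(i,k)$: each equation is obtained by applying the generalized Dehn--Sommerville identity to the subinterval $[x_i, x_k]$ of the poset (with $x_{-1} = \hat{0}$ and $x_d = \hat{1}$). For a semi-Eulerian poset $P'$ with $\Delta_{P'} \cong \Delta_P$, every proper subinterval $[u,v] \neq [\hat{0}, \hat{1}]$ is Eulerian, so all Bayer--Billera equations corresponding to proper subintervals continue to hold for $P'$ unchanged. The unique exception is the equation involving the full interval (indexed by $S = \emptyset$ and the extreme pair), which records the Euler relation on $[\hat{0}, \hat{1}]$ and fails for $P'$ by an amount proportional to $X$. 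A short direct calculation shows that the correction vector $f^X(d)$ --- perturbing only $f_{\{d-1\}}$ by $X$ --- exactly absorbs this discrepancy while leaving every other Bayer--Billera equation unchanged, placing $f(P') - f^X(d)$ in $F_E(d)$.

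For the reverse inclusion $F_E(d) + f^X(d) \subseteq F_P$, let $\{\mathcal{P}^c\}_c$ denote a Bayer--Billera family of rank $d$ simplicial convex polytopes whose flag $f$-vectors affinely span $F_E(d)$, and fix a reference semi-Eulerian poset $P^\ast$ of rank $d$ with $\Delta_{P^\ast}$ homeomorphic to $\Delta_P$ (one may take $P^\ast = P$). For each $c$, construct a connected-sum-type amalgamation $P^\ast \# \mathcal{P}^c$ by identifying a maximal chain in $P^\ast$ with one in $\mathcal{P}^c$ and then deleting the resulting common top-dimensional facet from the reduced order complexes --- the poset analog of the simplicial connected sum employed in the proof of Theorem \ref{fine affine span}. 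Because $\Delta_{\mathcal{P}^c}$ is a $(d-2)$-sphere, the reduced order complex of $P^\ast \# \mathcal{P}^c$ is $\Delta_{P^\ast} \# S^{d-2}$, which is again homeomorphic to $\Delta_P$; and the flag $f$-vector splits as $f_S(P^\ast \# \mathcal{P}^c) = f_S(P^\ast) + f_S(\mathcal{P}^c) - \varepsilon_S$, where $\varepsilon_S$ depends only on the chosen identification chain. Hence $\{f(P^\ast \# \mathcal{P}^c)\}_c$ affinely spans a translate of $F_E(d)$ sitting inside $F_P$; by the first inclusion this translate must lie in $F_E(d) + f^X(d)$, and since both are translates of $F_E(d)$ of equal dimension they coincide.

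The main obstacle is the rigorous execution of the poset-level connected sum $P^\ast \# \mathcal{P}^c$ in the second step: one must verify that the amalgamation yields a genuine rank $d$ semi-Eulerian poset whose reduced order complex realizes the simplicial connected sum of $\Delta_{P^\ast}$ and $\Delta_{\mathcal{P}^c}$, and whose flag $f$-vector has the additive decomposition stated. The remainder of the argument is then formal bookkeeping.
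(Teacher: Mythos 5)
Your overall strategy matches the paper's exactly: reduce to $d$ even via Dehn--Sommerville, show $F_P \subseteq F_E(d) + f^X(d)$ by tracking which Bayer--Billera equations the shift by $f^X(d)$ disturbs, and get the reverse inclusion by a connected-sum construction against the Bayer--Billera spanning family $\{\mathcal{P}_t\}$. Your poset-level amalgamation in the last step is a re-phrasing of the paper's $\Delta_P \# \partial \mathcal{P}_t$ and carries the same (unproved, but standard) additivity assertion for flag $f$-vectors under connected sum, so that part is fine modulo the bookkeeping you yourself flag.

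There is, however, one place where a step would fail as written. You assert that subtracting $f^X(d)$ --- i.e.\ perturbing only the single coordinate $f_{\{d-1\}}$ --- ``exactly absorbs this discrepancy while leaving every other Bayer--Billera equation unchanged,'' with the Euler relation as the unique equation touched. That is not true: $f_{\{d-1\}}$ also sits on the \emph{right-hand side} of the Bayer--Billera equation indexed by $S = \{d-1\}$ with gap interval $(i,k) = (-1, d-1)$,
\[
\sum_{j=0}^{d-2} (-1)^j\, f_{\{d-1\}\cup j} \;=\; f_{\{d-1\}}\bigl(1-(-1)^{k-i-1}\bigr),
\]
so the shift \emph{does} alter this equation's right-hand side. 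The reason the argument survives is that the right-hand coefficient $1-(-1)^{k-i-1}$ vanishes for $d$ even, and the left-hand side involves only two-element index sets $\{j,d-1\}$, none of which equals $\{d-1\}$ --- an observation the paper makes explicitly and which your ``short direct calculation'' must surface for the inclusion $F_P \subseteq F_E(d)+f^X(d)$ to go through. Your heuristic that ``proper subintervals of $P'$ are Eulerian, hence those equations hold'' explains why $P'$ itself satisfies all non-Euler Bayer--Billera relations, but it does not by itself address whether the \emph{shifted vector} $f(P') - f^X(d)$ still satisfies them; that requires checking every equation in which $f_{\{d-1\}}$ appears, and there are two, not one.
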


\begin{proof}
    If $d$ is odd, then $P$ is Eulerian and $X=0$. Hence, $F_P \subseteq F_E(d).$  So assume that $d$ is even. Since $P$ is semi-Eulerian the flag $f$-vector of $\Delta_P$ satisfies all of the equations in  (\ref{cd inequalities}) except the one equivalent to Euler's formula for the sphere,
    $$\displaystyle\sum^{d-1}_{j=0} (-1)^j f_{\{j\}} = f_\emptyset (1-(-1)^{d-1}).$$
    
    Let $f^\prime$ be the flag $f$-vector defined by $f^\prime_S = f_S(\Delta_P) - f^X_S(d).$  The only expressions of (\ref{cd inequalities}) which are different for $f^\prime$ are the one above, which by the definition of $X$ is now valid, and
    $$\displaystyle\sum^{d-2}_{j=0} (-1)^j f^\prime_{\{d-1\} \cup j} = f^\prime_{\{d-1\}} (1-(-1)^d).$$
    Comparing this expression with the corresponding expression for $f$, the left hand side is unchanged, while the right hand side in both cases is zero since $d$ is even.  Hence $f^\prime$ is in $F_E(d).$  Therefore, $F_P \subseteq F_E(d) + f^X(d).$

    To establish the opposite inclusion, consider the family of flag $f$-vectors given by the (reduced) order complex of $\{\Delta_P \# \partial \PP_t\},$ where $\PP_t$ is the collection of $(d-1)$-polytopes given by Bayer and Billera whose flag $f$-vectors (affinely) span $F_E(d)$ \cite{BBa}.  As this set of flag $f$-vectors is a translation of the flag $f$-vectors of the reduced order complex of $\{\partial \PP_t\},$ its affine dimension is the same.  Since each $\Delta_P \# \partial \PP_t$ is homeomorphic to $\Delta_P, \dim F_P \ge \dim F_E(d).$  
    \end{proof}
    
\noindent The containment $F_P \subseteq F_E(d) + f^X(d)$ is a special case of \cite[Theorem 4.2]{Eh}, where Ehrenborg considers posets whose intervals of varying lengths are Eulerian.  
    
 In view of Karu's proof that the {\it cd}-index of any Gorenstein* poset has nonnegative coefficients \cite{Kar}, and the fact that  the flag $f$-vectors of semi-Eulerian posets with the same Euler characteristic and dimension lie in the same affine subspace of flag $f$-vectors,  it seems natural to ask the following question.
 
 \begin{prob}
   For a fixed semi-Eulerian poset $P,$ describe the cone of flag $f$-vectors of posets $P^\prime$ such that $\Delta_P$ is homeomorphic to $\Delta_{P^\prime}.$ 
 \end{prob}  
 
  An alternative approach to the combinatorics of semi-Eulerian posets is through the toric $h$-vector.  Originally introduced to correspond to the Betti numbers of the intersection cohomology of toric varieties associated to rational polytopes, the toric $h$-vector can be defined for any  finite graded poset with a minimum element $\bt$ and a maximum element $\tp.$  With the exception noted below, we follow Stanley's presentation \cite[Section 3.14]{St6} and refer the reader to \cite{St7} for background on the motivation and history behind its definition.
     
Let $P$ be a finite graded poset with $\bt$ and $\tp$ and let $\rho$ be the rank function of $P.$   Let $\tilde{P}$ be the set of all intervals $[\bt,z]$ in $P$  ordered by inclusion.  The map $z \to [\bt,z]$ is a poset isomorphism from $P$ to $\tilde{P}.$  Define two polynomials $\tilde{h}$ and $\tilde{g}$ inductively as follows.

\begin{itemize}
  \item
    $\th(\mathbf{1},x) = \tg(\mathbf{1},x) = 1.$  Here $\mathbf{1}$ is the poset with only one element $\tp=\bt.$   \item
   If the rank of $P$ is $d+1,$ then $\th(P,x)$ has degree $d.$ Write $\th(P,x) = \th_d + \th_{d-1} x + \th_{d-2} x^2 + \dots + \th_0 x^d.$ Then define
    $\tg(P,x) = \th_d + (\th_{d-1} - \th_d) x + (\th_{d-2} - \th_{d-1}) x^2 + \dots + (\th_{d-m} - \th_{d-m+1}) x^m,$
    where $m = \lfloor d/2 \rfloor.$
    
    \noindent [NOTE: Our $\th_i$ is $\th_{d-i}$ in \cite{St6}.]
  \item
  If the rank of $P$ is $d+1,$ then define
  \begin{equation} \label{toric def} 
   \th(P,x) = \displaystyle\sum_{\stackrel{Q \in \tilde{P}}{Q \neq P}} \tg (Q,x) (x-1)^{d - \rho(Q)}.
  \end{equation}
  
\end{itemize}

Induction shows that if $B_d$ is the face poset of the $(d-1)$-simplex, then $\th(B_d,x) = 1 + x + \dots + x^{d-1}$ and $\tg(B_d,x) = 1.$  From this it follows that if $P$ is the face poset of a simplicial complex $\Delta$ with $\tp$ adjoined, then $\th_i(P,x) = h_i(\Delta).$  

\begin{example}
Figure \ref{poset} shows the Hasse digram, $P$, of the face poset of the cell decomposition of the torus depicted in Figure \ref{torus}.   The rank one and rank two elements correspond to simplices.  The four rank two elements, which correspond to the four rectangles of the cell decomposition have $\hat{g} = 1 + x.$  So,

$$\th_P(x) = (x-1)^3 + 4(x-1)^2 + 8(x-1) + 4(x+1) = x^3 + x^2 + 7x -1.$$
\end{example}

\begin{figure} 
 \scalebox{0.60}[0.60]{\includegraphics{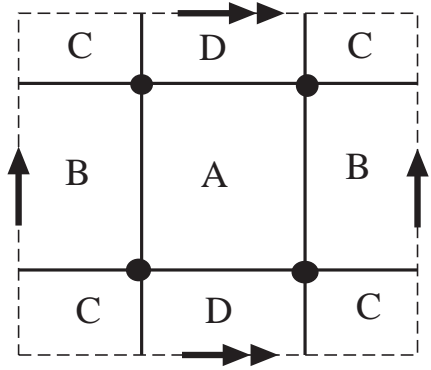}}
  \caption{Cell decomposition of the torus} \label{torus}
\end{figure}

\begin{figure} 
 \scalebox{0.40}[0.40]{\includegraphics{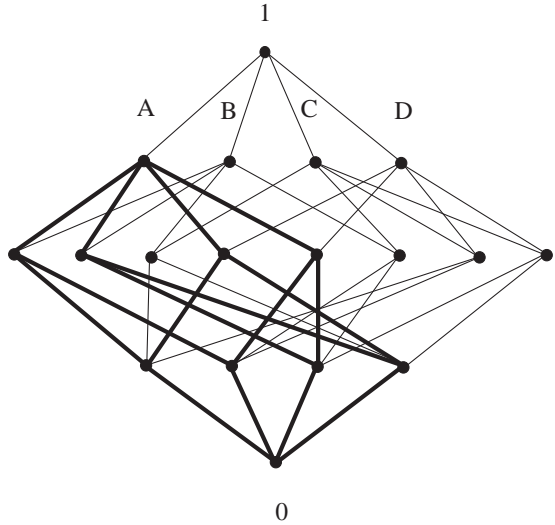}}
  \caption{Hasse diagram of $P$.} \label{poset}
\end{figure}

\begin{thm} \label{toric h}
  Let $P$ be a semi-Eulerian poset of rank $d+1$ and let $\Delta_P$ be the reduced order complex of $P.$ Then
  \begin{equation} \label{toric formula}
  \th_{d-i} - \th_i = (-1)^i \binom{d}{i} [\chi(\Delta_P) - \chi(S^{d-1})].
  \end{equation}
 \end{thm}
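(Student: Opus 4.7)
The plan is to prove the identity by induction on $d$, leveraging the recursive definition (\ref{toric def}) of $\th$ together with the palindromic property of $\th$ for Eulerian posets (Stanley's theorem, the $X=0$ case). Since $P$ is semi-Eulerian, every proper interval $[\bt,z]$ with $z<\tp$ is Eulerian, so this palindromy is available for all intervals appearing in the recursion. Writing $\psi(P,x):=\th(P,x)-x^{d}\th(P,1/x)$ and $X:=\chi(\Delta_P)-\chi(S^{d-1})$, the theorem is equivalent to $\psi(P,x)=X(1-x)^{d}$.

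A routine expansion using the palindromy of $\th(Q,x)$ and the definition of $\tg$ establishes the key identity
$$\tg(Q,x)-x^{r}\tg(Q,1/x)=(1-x)\,\th(Q,x)$$
for every Eulerian $Q$ of rank $r\ge 1$. Applying the recursion to both $\th(P,x)$ and $x^{d}\th(P,1/x)$, and using this identity to rewrite each $x^{\rho(z)}\tg([\bt,z],1/x)$ for $z>\bt$ (the trivial interval $[\bt,\bt]$ falls outside the scope of the identity and must be treated as a boundary term contributing $(1-x)^{d}$), the difference simplifies to
$$\psi(P,x)=-(1-x)^{d+1}-2\,O(x)+(1-x)\sum_{z<\tp}(1-x)^{d-\rho(z)}\,\th([\bt,z],x),$$
where $O(x)$ collects the terms with $d-\rho(z)$ odd from the $\tg$-expansion of $\th(P,x)$.

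The final step substitutes the recursion into the remaining $\th$-sum and switches the order of summation. For each $w$ with $\rho(w)<d$, the inner sum reduces to $\sum_{z\in(w,\tp)}(-1)^{\rho(z)}$; here the semi-Eulerian hypothesis is decisive. When $w>\bt$, the interval $[w,\tp]$ is Eulerian and Philip Hall's identity $\sum_{z\in[w,\tp]}\mu(w,z)=0$ combined with $\mu(w,z)=(-1)^{\rho(z)-\rho(w)}$ pins the sum down, while for $w=\bt$ the relation $\mu_P(\bt,\tp)=\tilde\chi(\Delta_P)=X+(-1)^{d+1}$ injects exactly the $X$-defect. After the bookkeeping, the $\th$-sum evaluates to $(1-x)^{d+1}+X(1-x)^{d}+2O(x)$, and substitution back yields $\psi(P,x)=X(1-x)^{d}$.

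The main obstacle is the careful sign-tracking through the two applications of the recursion, compounded by the two boundary phenomena that must be handled separately: the $z=\bt$ case, where the key identity above fails, and the $w=\bt$ case, where the interval $[w,\tp]=P$ itself is only semi-Eulerian rather than Eulerian. These two boundary terms conspire to produce both the $(1-x)^{d+1}$ cancellation and the $X(1-x)^{d}$ residue.
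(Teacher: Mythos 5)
Your strategy is sound and reaches the correct conclusion, but it is worth comparing it to the paper's route, which is a small variation of Stanley's argument for Eulerian posets. Both proofs rest on the same two ingredients: the palindromy-derived identity $\tg(Q,x)-x^{\rho(Q)}\tg(Q,1/x)=(1-x)\th(Q,x)$ for proper (hence Eulerian) intervals $Q$, and the defining relation $\sum_{z\in[w,\tp]}\mu(w,z)=0$ of the M\"obius function. The paper applies these once each: it rewrites the recursion as $y^{-\rho(P)}(\tg(P)+y\th(P))=\sum_Q\tg(Q)y^{-\rho(Q)}$, inverts by M\"obius, and then substitutes the palindromy identity, arriving at $\th(P,x)=-y^{d}[\mu_P(\bt,\tp)-(-1)^{d+1}]+x^{d}\th(P,1/x)$ in one pass. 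Your version instead expands $\psi(P,x)=\th(P,x)-x^{d}\th(P,1/x)$ by two successive applications of the defining recursion together with a summation interchange, pushing the M\"obius/Euler relation to the very end. This is a brute-force reorganization of the same calculation: it avoids framing anything as ``M\"obius inversion'' at the price of a second layer of recursion and heavier sign bookkeeping. It does bring out clearly where the two boundary contributions enter ($z=\bt$ for the failure of palindromy, $w=\bt$ for the $\mu_P(\bt,\tp)$-defect), which is a nice feature. Note also that using ``Stanley's theorem'' for the intervals is really an induction on rank in disguise, exactly as the paper makes explicit. One technical slip: your displayed intermediate identity should read $\psi(P,x)=+2O(x)-(1-x)^{d+1}+(1-x)\sum_{z<\tp}(1-x)^{d-\rho(z)}\th([\bt,z],x)$, i.e.\ the coefficient of $O(x)$ has sign $+2$, not $-2$; correspondingly the $\th$-sum evaluates to $(1-x)^{d+1}+X(1-x)^{d}-2O(x)$. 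Since your two sign errors are consistent they cancel and the final conclusion $\psi(P,x)=X(1-x)^{d}$ is unaffected, but the intermediate formulas as written are not correct.
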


\begin{proof}
  The proof is a  small variation of Stanley's proof of this equation in the special case that $P$ is Eulerian \cite[pg. 139]{St6}.  Write $\th(P)$ for $\th(P,x)$ and $\tg(P)$ for $\tg(P,x).$  Let $y = x-1.$ Mutliply (\ref{toric def}) by $y$ and add $\tg(P)$ to obtain for $P \neq \mathbf{1},$
  
  $$\tg(P) + y \ \th(P) = \displaystyle\sum_{Q \in \tilde{P}} g(Q) y^{\rho(P) - \rho(Q)}.$$ 
  
\noindent Hence for $P \neq \mathbf{1},$

$$y^{-\rho(P)}(\tg(P) + y\ \th(P)) = \displaystyle\sum_Q \tg(Q) y^{-\rho(Q)}.$$

Since $\sum_{Q \in \mathbf{1}} \tg(Q)y^{-\rho(Q)} = 1,$ M\"{o}bius inversion implies,

$$\tg(P) y^{-\rho(P)} = \mu_P(\bt,\tp) + \displaystyle\sum_{\stackrel{Q \in \tilde{P}}{Q \neq \mathbf{1}}} (\tg(Q) + y\ \th(Q)) y^{-\rho(Q)} \mu_{\tilde{P}} (Q,P) $$ 

Since $\tilde{P}$ is semi-Eulerian, $\mu_{\tilde{P}} (Q,P) = (-1)^{\rho(P) - \rho(Q)}.$ So,

\begin{equation} \label{toric1}
  \tg(P) = y^{\rho(P)} \mu_P(\bt,\tp) + \displaystyle\sum_{Q \neq \mathbf{1}} (\tg(Q) + y \th(Q)) (-y)^{\rho(P) - \rho(Q)}.
\end{equation}

Let $\th(Q) = a_0 + a_1 x + \dots + a_r x^r,$ where $\rho(Q) = r+1.$  Then

$$\tg(Q) + y \th(Q) = (a_s - a_{s+1}) x^{s+1} + (a_{s+1} - a_{s+2}) x^{s+2} + \dots,$$

\noindent where $s = \lfloor r/2 \rfloor.$  Since each $Q$ is neither  $\mathbf{1}$ nor $P,$ it is Eulerian, so we may assume by induction on the rank of $Q$ that $a_i = a_{r-i},$ where $r +1 = \rho(Q), r<d.$  In this case,

\begin{equation} \label{toric2}
  \begin{array}{lcl} \tg(Q) + y \ \th(Q) & = & (a_s - a_{s-1}) x^{s+1} + (a_{s-1} - a_{s-2}) x^{s+2} +\dots \\
  \ & = & x^{\rho(Q)} \tg(Q, 1/x).
  \end{array}
 \end{equation}
  
  Now subtract $y \ \th(P) + \tg(P)$ from both sides of (\ref{toric1}) and use (\ref{toric2}) to obtain

$$\begin{array}{lcl}
-y \th(P) &  = & y^{\rho(P)} \mu_P(\bt,\tp) + \displaystyle\sum_{\bt< Q < \tp} x^{\rho(Q)} \tg(Q,1/x) (-y)^{\rho(P)-\rho(Q)} \\
\Rightarrow \th(P) &  = &- (y^d) [\mu_P(\bt,\tp) -(-1)^{d+1}] + \displaystyle\sum_{Q<\tp} x^{\rho(Q)} \tg(Q,1/x) (-y)^{d - \rho(Q)} \\
\ & = & -(y^d) [\mu_P(\bt,\tp) -(-1)^{d+1}] + x^d \th(P,1/x). \end{array}$$

Comparing like terms of the last equation gives

\begin{equation} \label{toric3}
\th_{d-i} - \th_i = (-1)^{d-i-1} \binom{d}{i}[\mu_P(\bt,\tp) - (-1)^{d+1}].
\end{equation}

When $d$ is even $P$ is Eulerian, so the right hand side of (\ref{toric3}) is zero and the equality agrees with (\ref{toric formula}).  If $d$ is odd, then, since $\mu_P({\bt,\tp}) = \chi(\Delta_P)-1$ and $(-1)^{d+1} = \chi(S^{d-1})-1,$ (\ref{toric3}) also agrees with (\ref{toric formula}).

\end{proof}

As the toric $h$-vector agrees with the usual $h$-vector for simplicial complexes, it is easy to see that if we fix the order complex homeomorphism type of a semi-Eulerian poset,  (\ref{toric formula}) spans all of the linear relations among the $\th_i$.

\section{inequalities} \label{inequalities}
   
  There are two very general inequalities for $h$-vectors of homology manifolds.  One is due to Schenzel (Theorem \ref{schen} below), and the following {\it rigidity inequality}  due to Kalai and, independently,  Gromov.
  
  \begin{thm} (Rigidity inequality) \cite[2.4.10]{Gr} \cite{Kal2} \label{rigidity}
    Suppose $\Delta$ is a   homology manifold without boundary and $d \ge 3.$ Then $h_0 \le h_1 \le h_2.$
  \end{thm}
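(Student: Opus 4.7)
The plan is to split the two inequalities and handle them separately: $h_0 \le h_1$ is essentially immediate, while $h_1 \le h_2$ carries the real content and reduces to a generic rigidity statement about the $1$-skeleton of $\Delta$ in $\R^d$.

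For the first inequality, $h_0 = 1$ and $h_1 = f_0 - d$ by (\ref{f by h}), so the claim is $f_0 \ge d+1$. Fix any facet $\sigma$, which has $d$ vertices, and any $(d-2)$-subface $\tau \subset \sigma$. Since $\Delta$ has no boundary, $\lk \tau$ is a homology $0$-sphere---two isolated vertices---one of which lies in $\sigma$ while the other supplies the required $(d+1)$st vertex.

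For the second inequality, substituting into (\ref{f by h}) yields
\[
h_2 - h_1 \;=\; f_1 - d f_0 + \binom{d+1}{2},
\]
so $h_1 \le h_2$ is equivalent to the combinatorial rigidity bound $f_1 \ge d f_0 - \binom{d+1}{2}$. I would derive this by embedding $V$ generically in $\R^d$ and examining the resulting bar-and-joint framework: its rigidity matrix $R$ has $f_1$ rows and $d f_0$ columns, and $\ker R$ always contains the $\binom{d+1}{2}$-dimensional space of infinitesimal Euclidean motions. If the $1$-skeleton $G(\Delta)$ is \emph{generically $d$-rigid}---meaning these trivial motions exhaust $\ker R$---then $\operatorname{rank} R = d f_0 - \binom{d+1}{2}$, which is at most $f_1$.

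The heart of the matter is therefore to prove that $G(\Delta)$ is generically $d$-rigid for every closed $(d-1)$-dimensional homology manifold with $d \ge 3$. I would proceed by induction on $d$ using two standard tools of combinatorial rigidity: the \emph{cone lemma} (a graph $H$ is generically $d$-rigid iff the cone $v*H$ is generically $(d+1)$-rigid) and the \emph{gluing lemma} (the union of two generically $d$-rigid subgraphs sharing at least $d$ vertices is generically $d$-rigid). The base case $d = 3$ is the classical rigidity theorem for triangulated closed surfaces, due to Dehn--Alexandrov for the $2$-sphere (via Steinitz) and to Fogelsanger in general. For $d \ge 4$, choose any vertex $u$; its link is a homology $(d-2)$-sphere, whose $1$-skeleton is generically $(d-1)$-rigid by induction, and the cone lemma promotes $G(\overline{\st}\, u)$ to generic $d$-rigidity. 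Strong connectivity of $\Delta$ through $(d-2)$-faces then lets us assemble $G(\Delta)$ one closed star at a time along a facet path, each new star meeting the running union in a full facet---whose $d$ vertices span the complete graph $K_d$, itself generically $d$-rigid---so that the gluing lemma applies at every step.

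The principal obstacle is the bookkeeping in this inductive gluing: one must order the closed stars so that every incremental intersection contains $d$ vertices spanning a generically $d$-rigid subgraph. This is handled by choosing a facet path in $\Delta$ (consecutive facets sharing a $(d-2)$-face, which exists by strong connectivity) and attaching the stars of its vertices in turn, so that each intersection with the accumulated union always contains at least one full facet of $\Delta$.
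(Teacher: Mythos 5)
The paper does not actually prove Theorem~\ref{rigidity}; it is quoted from Kalai \cite{Kal2} and Gromov \cite[2.4.10]{Gr}, so there is no in-text proof to compare against. Your sketch is a correct reconstruction of Kalai's rigidity argument from \cite{Kal2}. The arithmetic reduction of $h_1 \le h_2$ to $f_1 \ge d f_0 - \binom{d+1}{2}$ and then to generic $d$-rigidity of the $1$-skeleton is exactly right, and the argument for $h_0 \le h_1$ via a codimension-one face of a facet is fine. Two small remarks on the rigidity induction. First, the gluing step can be stated more cleanly than your facet-path bookkeeping: order the vertices $v_1,\dots,v_n$ so that each $v_k$ with $k\ge 2$ is adjacent to some earlier $v_i$; then $\overline{\st}\,v_k$ and $\overline{\st}\,v_i$ contain a common facet and hence share at least $d$ vertices, so the gluing lemma applies directly to $G(\overline{\st}\,v_k)$ against $\bigcup_{j<k}G(\overline{\st}\,v_j)$ (the shared vertices need not span a rigid subgraph; $d$ generic points already determine a Euclidean motion, which is all the lemma uses). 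Second, Fogelsanger's theorem in fact gives generic $d$-rigidity of the $1$-skeleton of every minimal $(d-1)$-cycle in every dimension $d\ge 3$, so it subsumes the whole induction and covers all homology manifolds without boundary at once; the cone-and-gluing route is Kalai's, with Fogelsanger (or, for $2$-spheres, Gluck/Whiteley) supplying the base case.
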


The rigidity inequality has a strong implication when the fundamental group of $\Delta$ is nontrivial.  Suppose $\tilde{\Delta}$ is a $t$-sheeted covering of $\Delta.$  Then the triangulation of $\Delta$ lifts to a triangulation of $\tilde{\Delta}$ with $f_i(\tilde{\Delta}) = t f_i(\Delta)$  for $i \ge 0.$
 
 \begin{prop} \label{covering}
   If $\tilde{\Delta}$ is a $t$-sheeted covering of $\Delta, $ then
   \begin{itemize}
      \item
         $h_1(\tilde{\Delta}) =t \cdot h_1(\Delta) +d (t-1).$
      \item
         $h_2(\tilde{\Delta}) = t \cdot  h_2(\Delta) - (t-1) \binom{d}{2}.$
   \end{itemize}
\end{prop}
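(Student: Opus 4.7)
The plan is to reduce both identities to direct computations using the formula (\ref{f by h}) expressing $h_i$ in terms of $f_{-1},\dots,f_{i-1}$, combined with the observation that a $t$-sheeted cover multiplies each $f_i$ by $t$ while leaving $f_{-1}=1$ untouched. This discrepancy between how $f_{-1}$ and $f_i$ ($i \ge 0$) transform is precisely what produces the nontrivial correction terms on the right-hand sides.

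First I would specialize (\ref{f by h}) to $i=1$ and $i=2$ to get
\begin{align*}
h_1 &= f_0 - d,\\
h_2 &= f_1 - (d-1)f_0 + \binom{d}{2}.
\end{align*}
Then, using $f_0(\tilde{\Delta})=t f_0(\Delta)$ and $f_1(\tilde{\Delta})=t f_1(\Delta)$, I would compute
$$h_1(\tilde{\Delta}) = t f_0(\Delta) - d = t(h_1(\Delta)+d) - d = t\, h_1(\Delta) + d(t-1),$$
and similarly
$$h_2(\tilde{\Delta}) = t f_1(\Delta) - (d-1) t f_0(\Delta) + \binom{d}{2} = t\, h_2(\Delta) - (t-1)\binom{d}{2}.$$

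The only substantive point to verify before beginning is that the covering triangulation of $\tilde{\Delta}$ really does satisfy $f_i(\tilde{\Delta}) = t f_i(\Delta)$ for every $i \ge 0$, which is what justifies lifting face counts through the covering map; this is already asserted in the paragraph preceding the proposition, so there is no genuine obstacle. The proof is thus essentially a two-line calculation, and the content of the proposition lies in the mismatch between the scaling behavior of $f_{-1}$ and that of the positive-dimensional face numbers, which shifts $h_1$ upward by $d(t-1)$ and $h_2$ downward by $(t-1)\binom{d}{2}$.
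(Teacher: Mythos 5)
Your proof is correct and is exactly the argument the paper has in mind; the paper simply states that the proposition follows by combining $f_i(\tilde{\Delta}) = t f_i(\Delta)$ with the formula expressing $h$ in terms of $f$, which is precisely the two-line calculation you carry out. The expansions $h_1 = f_0 - d$ and $h_2 = f_1 - (d-1)f_0 + \binom{d}{2}$ are the correct specializations of (\ref{f by h}), and the algebra verifying both identities is sound.
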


\begin{proof} This is a straight-forward application of  $f_i(\tilde{\Delta}) = t f_i(\Delta)$ and the definition of $h$-vectors in terms of $f$-vectors. 
\end{proof}

\begin{thm} \label{covering 2}
   Let $\Delta$ be a closed homology manifold.  If $\pi_1(\Delta)$ has a subgroup of index $t,$ then 
   $$\frac{t-1}{t} \binom{d+1}{2} \le h_2 - h_1 \le \binom{h_1}{2}.$$
   In particular, if $|\pi_1(\Delta)|$ is finite and greater than $\binom{d+1}{2},$ or,  if $\beta_1 > 0,$ then 
   $$\binom{d+1}{2} \le h_2 - h_1 \le \binom{h_1}{2}.$$
\end{thm}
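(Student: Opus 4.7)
The proof splits naturally into an upper bound and a lower bound, each of which I would handle independently.

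For the upper bound $h_2 - h_1 \le \binom{h_1}{2}$, no topological assumption is really needed beyond $\Delta$ being simplicial. The plan is to expand both sides in terms of $f_0 = n$ and $f_1$: using $h_1 = n - d$ and $h_2 = f_1 - (d-1)n + \binom{d}{2}$ one computes
\begin{equation*}
h_2 - h_1 = f_1 - dn + \binom{d+1}{2}.
\end{equation*}
The trivial edge bound $f_1 \le \binom{n}{2}$ then gives $h_2 - h_1 \le \binom{n}{2} - dn + \binom{d+1}{2} = \binom{n-d}{2} = \binom{h_1}{2}$, completing this half.

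For the lower bound, the idea is to transport the rigidity inequality (Theorem \ref{rigidity}) up to a finite cover and then pull it back. Given a subgroup of $\pi_1(\Delta)$ of index $t$, it corresponds to a $t$-sheeted covering $\tilde\Delta \to \Delta$, which is again a closed homology manifold (local homology is preserved by local homeomorphisms, and orientability lifts for a finite cover of an orientable manifold — or one may simply work with $\F_2$ coefficients, the rigidity inequality being valid in either case). Since $\tilde\Delta$ is also of dimension $d-1 \ge 3$, Theorem \ref{rigidity} gives $h_1(\tilde\Delta) \le h_2(\tilde\Delta)$. Substituting the formulas of Proposition \ref{covering} and rearranging yields
\begin{equation*}
t\bigl(h_2(\Delta) - h_1(\Delta)\bigr) \ge (t-1)\Bigl(d + \tbinom{d}{2}\Bigr) = (t-1)\tbinom{d+1}{2},
\end{equation*}
which is the desired lower bound.

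For the two special cases, if $|\pi_1(\Delta)| = t$ is finite with $t > \binom{d+1}{2}$, then the trivial subgroup has index $t$, so
\begin{equation*}
h_2 - h_1 \ge \frac{t-1}{t}\binom{d+1}{2} = \binom{d+1}{2} - \frac{\binom{d+1}{2}}{t} > \binom{d+1}{2} - 1,
\end{equation*}
and integrality of $h_2 - h_1$ forces $h_2 - h_1 \ge \binom{d+1}{2}$. If $\beta_1(\Delta) > 0$, the abelianization of $\pi_1(\Delta)$ surjects onto $\Z$, so $\pi_1(\Delta)$ has subgroups of arbitrarily large finite index $t$; letting $t \to \infty$ in the main inequality gives the same conclusion.

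The only non-routine point is the verification that the cover $\tilde\Delta$ really is a closed homology manifold to which Theorem \ref{rigidity} applies — everything else is bookkeeping — but this is standard covering-space theory together with the fact that $H_*(\lk\rho;k)$ is a local invariant that lifts. So I expect no substantial obstacle beyond citing the covering-space construction cleanly.
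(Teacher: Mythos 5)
Your proof takes essentially the same route as the paper's: the upper bound is the elementary observation valid for any pure complex (which you usefully spell out via the $f_1 \le \binom{n}{2}$ computation), and the lower bound comes from applying the rigidity inequality to the $t$-sheeted cover and pulling back through Proposition \ref{covering}. The one place you go beyond the paper's write-up is the finite-$\pi_1$ case, where you supply the integrality step ($h_2 - h_1 > \binom{d+1}{2} - 1$ forces $h_2 - h_1 \ge \binom{d+1}{2}$), which the paper's proof leaves implicit; your discussion of why the cover is again a homology manifold without boundary is also a correct detail that the paper takes for granted.
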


\begin{proof}
The inequality $ h_2 - h_1 \le \binom{h_1}{2}$ holds for {\it any} pure  complex.  Let $\tilde{\Delta}$ be a $t$-sheeted covering space of $\Delta$ corresponding to a subgroup of $\pi_1(\Delta)$  of index $t.$  By the rigidity inequality, $ 0 \le h_2(\tilde{\Delta}) - h_1(\tilde{\Delta}).$ But, by the above proposition,

$$ 0 \le h_2(\tilde{\Delta}) - h_1(\tilde{\Delta}) = t \cdot h_2(\Delta) - (t-1) \binom{d}{2} - t \cdot  h_1(\Delta) - d(t-1).$$

If $\beta_1 \ge 0,$ then $\pi_1$ has subgroups of arbitrarily large index, so the second inequality follows from the first.

\end{proof}
         
The inequality involving only $h_1$ in Theorem \ref{covering 2} can be improved if $\Delta$ is a combinatorial manifold. A {\it combinatorial $(d-1)$-manifold} is a simplicial complex in which the link of every vertex is PL-homeomorphic to the boundary of the $(d-1)$-simplex.  

\begin{thm} \cite{BKu}
  Let $\Delta$ be a  combinatorial manifold.  If $\pi_1(\Delta)$ is not trivial, then $ d+1 \le h_1.$
\end{thm}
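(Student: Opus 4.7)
The plan is to prove the contrapositive: if $n \leq 2d$, i.e., $h_1 \leq d$, then $\pi_1(\Delta)$ is trivial.

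First I would extract as much as possible from the covering-space machinery already in place. If $\pi_1(\Delta)$ has a subgroup of index $t$, lift to the corresponding $t$-sheeted cover $\tilde{\Delta}$. Applying the rigidity inequality (Theorem~\ref{rigidity}) to $\tilde{\Delta}$, together with Proposition~\ref{covering}, produces
$$h_2 - h_1 \;\geq\; \frac{t-1}{t}\binom{d+1}{2}.$$
Combined with the elementary bound $h_2 - h_1 \leq \binom{h_1}{2}$ valid for any pure complex, this yields $h_1(h_1-1)\geq \tfrac{t-1}{t}\,d(d+1)$, and a short calculation shows that $h_1 \leq d$ forces $t \leq (d+1)/2$. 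So the conclusion is already immediate whenever $\pi_1(\Delta)$ is infinite (take $t$ arbitrarily large) or admits a finite quotient of order exceeding $(d+1)/2$; in particular much of the content of Theorem~\ref{covering 2} is reusable verbatim.

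The residual case is that $\pi_1(\Delta)$ is a nontrivial finite group of order at most $(d+1)/2$. Here the full PL structure of a combinatorial manifold must be invoked---a homology manifold hypothesis would not suffice, and the assumption that every link is a genuine PL sphere (rather than merely a homology sphere) is essential. Following Brehm and K\"uhnel, I would assume $n \leq 2d$ and exploit local combinatorial surgeries available in the PL setting (bistellar flips, stellar subdivisions, elementary collapses) to argue that $\Delta$ admits a sequence of such moves reducing it to the boundary of the $d$-simplex. Because PL surgeries preserve PL homeomorphism type, $\Delta$ would then be a PL $(d-1)$-sphere, contradicting $\pi_1(\Delta)\neq 1$.

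The main obstacle is executing this combinatorial reduction. With $n\leq 2d$, the link $\lk\{u,v\}$ of every edge is a PL $(d-3)$-sphere on comparatively few vertices; one needs to guarantee a flippable configuration in some link and to verify that performing the flip strictly decreases a well-chosen complexity measure (for example, the face vector in a fixed lexicographic order). Ruling out the possibility that a nontrivial fundamental group persists all the way through this reduction---equivalently, that the minimal combinatorial manifold with $\pi_1\neq 1$ must already have at least $2d+1$ vertices---is the technical heart of \cite{BKu}, and is exactly where the sharpness of the bound $h_1\geq d+1$ lives.
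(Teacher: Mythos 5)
The paper does not prove this theorem at all---it is cited from Brehm and K\"uhnel \cite{BKu} and used as a black box---so there is no in-paper proof to compare against. That said, your proposal is not itself a proof, and you in effect say so yourself.

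Your covering-space reduction is correct arithmetic: combining Theorem~\ref{covering 2} with $h_2 - h_1 \le \binom{h_1}{2}$ and $h_1 \le d$ does force $t \le (d+1)/2$, so a nontrivial $\pi_1$ could only be a finite group of order at most $(d+1)/2$. (Note you need $\pi_1$ itself to be small; what you control via covers is the index of a subgroup, not the order of a quotient, but since you may take the universal cover when $\pi_1$ is finite, $t = |\pi_1|$, so the conclusion is right.) This much is genuinely already present in the paper via Theorem~\ref{covering 2}, and it works for homology manifolds. The content specific to combinatorial manifolds is precisely the residual case, and there your proposal stops at a plan. You suggest bistellar flips or stellar moves should reduce $\Delta$ to $\partial \Delta^d$ when $n \le 2d$, but you supply no mechanism for producing a flippable configuration, no decreasing complexity measure, and no argument that the reduction terminates at the simplex rather than at some other small PL sphere or at an irreducible non-sphere. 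You explicitly flag this as ``the technical heart of \cite{BKu},'' which is an admission that the proof is absent. For the record, Brehm and K\"uhnel's actual argument is not a bistellar-flip reduction to the simplex; it is a collapsibility and Morse-theoretic argument showing that a combinatorial $(d-1)$-manifold with few vertices is highly connected, and the PL hypothesis enters in guaranteeing that certain subcomplexes collapse. So the gap in your proposal is real, it is where the entire theorem lives, and the sketched route (flips to $\partial\Delta^d$) is not the one that is known to work.
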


\begin{prob}
  Do there exists triangulated  manifolds with nontrivial fundamental group and $h_2 - h_1 < \binom{d+1}{2}$ or $h_1 < d+1?$  
   \end{prob}

One application of Theorem \ref{covering 2} is a proof that a family of triangulations of spherical bundles over $S^1$ given by K\"uhnel  have then minimum possible $f$-vector for homology manifolds without boundary and nonzero first Betti number. This family of complexes has the following properties.

\begin{thm} \cite{Ku2}
  For every  $d \ge 3$  there is a simplicial complex $M^d$ with the following properties.
  \begin{itemize}
      \item
         $M^d$ has $2d+1$ vertices.
      \item
         $M^d$ is $2$-neighborly, i.e. $f_1(M^d) = \binom{2d+1}{2}.$
      \item
         If $d$ is odd, then $M^d$ is homeomorphic to $S^1 \times S^{d-2}.$  If $d$ is even, then $M^d$ is homeomorphic to the nonorientable $S^{d-2}$-bundle over $S^1.$
     \item
         $M^d$ is vertex transitive with dihedral symmetry group.
      \item
         The link of every vertex of $M^d$ is a stacked sphere.
   \end{itemize}
\end{thm}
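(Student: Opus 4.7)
The plan is to construct $M^d$ explicitly as a cyclic simplicial complex on vertex set $\mathbb{Z}/(2d+1)\mathbb{Z}$ with dihedral symmetry, verify the combinatorial properties by direct calculation on difference cycles, and identify the homeomorphism type via an explicit $S^{d-2}$-fibration over $S^1$.

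For the construction, set $n = 2d+1$ and take vertex set $V = \mathbb{Z}_n$. Choose a base $d$-subset $\Sigma \subset V$ that is symmetric under the involution $\tau: i \mapsto -i$ and whose successive differences form a palindromic word summing to $n$. Declare the facets of $M^d$ to be the $\mathbb{Z}_n$-translates $\{\Sigma + k : k \in \mathbb{Z}_n\}$ (together with a second $\mathbb{Z}_n$-orbit if required to close up into a pseudomanifold). By construction $|V| = 2d+1$, and translation by $\mathbb{Z}_n$ together with $\tau$ generates a dihedral group of order $2n$ acting vertex-transitively. For $2$-neighborliness it suffices to show that the difference multiset $\{a - b : a \ne b \in \Sigma\}$ covers $\mathbb{Z}_n \setminus \{0\}$; this is a finite check that reduces to counting $d(d-1)$ ordered-pair differences against $2d$ nonzero residues (just possible when $d \ge 3$) and is achievable for the palindromic $\Sigma$ above.

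To prove that every vertex link is a stacked $(d-2)$-sphere, use the cyclic action to restrict to $\lk 0$. The facets of $M^d$ containing $0$ are the translates $\Sigma - s$ for $s \in \Sigma$, so $\lk 0$ comprises $d$ specific $(d-1)$-subsets of $V \setminus \{0\}$. Enumerating these in the natural cyclic order on $\Sigma$, consecutive facets differ by a single vertex swap, so each new $(d-1)$-simplex of the link is attached to the union of the previous ones along a single $(d-2)$-face --- exactly the inductive definition of a stacked sphere. Because every vertex link is thus a PL-$(d-2)$-sphere, $M^d$ is a closed combinatorial $(d-1)$-manifold.

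For the topology, consider the simplicial map $\pi: M^d \to S^1_n$ induced by the identity on $V$, where $S^1_n$ is the cyclic $n$-gon triangulation of the circle. The palindromic choice of $\Sigma$ forces the preimage $\pi^{-1}$ of each edge of $S^1_n$ to be a triangulated cylinder $I \times S^{d-2}$, and these $n$ cylinders glue cyclically to give an explicit $S^{d-2}$-bundle structure on $M^d$ over $S^1$. The monodromy is the cyclic shift by $+1$ on the fiber $S^{d-2}$; computing the sign of this permutation on the vertices of one fiber simplex shows it is orientation-preserving precisely when $d$ is odd. Hence $M^d \cong S^1 \times S^{d-2}$ for odd $d$ and the nonorientable twisted bundle for even $d$. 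The main obstacle is this last step: turning the tautological map $\pi$ into a genuine fiber bundle requires that the palindromic symmetry make each fiber over an edge of $S^1_n$ a triangulated cylinder, and tracking the monodromy sign to distinguish the two bundle types. Everything else --- vertex count, difference-cycle $2$-neighborliness, the stacked-link induction, and dihedral symmetry --- is finite bookkeeping built into the construction.
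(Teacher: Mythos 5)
The paper itself does not prove this statement; it simply cites K\"uhnel \cite{Ku2}. So the relevant question is whether your blind argument is internally sound, and there are two concrete breakdowns.

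First, your construction is under-specified in a way that cannot be repaired by ``a second $\mathbb{Z}_n$-orbit if required.'' You take the $\mathbb{Z}_n$-orbit of one base $d$-set $\Sigma$, giving (at most) $n = 2d+1$ facets and exactly $d$ facets through a given vertex. But a link that is a $(d-2)$-sphere with only $d$ facets is forced to be $\partial\Delta^{d-1}$, hence has only $d$ vertices; this is incompatible with $2$-neighborliness, which forces every vertex link to contain all $2d$ other vertices. Such a link, if it is a stacked $(d-2)$-sphere, must have $h_1=\cdots=h_{d-2}=d+1$ and hence $d^2-d$ facets, so in fact $M^d$ must have $n\cdot d(d-1)/d = (2d+1)(d-1)$ facets. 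K\"uhnel's actual complex (and the $\Delta_{S^1\times S^{2m-1}}(n)$ family in Section~\ref{Constructions} of this paper) is accordingly generated by $d-1$ distinct difference cycles, the permutations of $(1,1,\dots,1,2)$, not by one or two orbits. Your claim that ``consecutive facets of $\lk 0$ differ by a single vertex swap, so the link is stacked'' also needs all of these orbits present, and the induction you sketch is not the definition of a stacked sphere: attaching a new facet along a single codimension-one face describes a simple tree (a ball), and identifying the resulting ball as a \emph{stacked} sphere requires additionally that the last attachment close the ball up without introducing any higher interior faces, which is not addressed.

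Second, the topological identification does not work as written. The map $\pi: M^d \to S^1_n$ ``induced by the identity on $V$'' is not a simplicial map: a facet of $M^d$ has $d\geq 3$ vertices, and under the identity they remain $d$ distinct vertices of the $n$-gon $S^1_n$, which is not a face of $S^1_n$. So there is no tautological simplicial projection, and consequently no preimage decomposition of $M^d$ into cylinders over the edges of $S^1_n$. Identifying $M^d$ with the appropriate $S^{d-2}$-bundle over $S^1$ genuinely requires a separate argument (K\"uhnel's route is to show $M^d$ lies in Walkup's class $\mathcal{H}^{d-1}$, analyze it as a handle on a stacked sphere, and then use $\pi_1$ and orientability to pin down the bundle). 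As it stands the proposal's bundle structure, and the monodromy-sign computation that is supposed to distinguish the orientable from the nonorientable case, rest on a nonexistent map. Both of these gaps are essential and are exactly the parts you yourself flagged as ``the main obstacle''; fixing them essentially amounts to reproducing K\"uhnel's construction and argument rather than supplying a shortcut.
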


Our $M^d$ is called $M^{d-1}$ in \cite{Ku2}.  These triangulations were generalized by K\"uhnel and Lassmann \cite{KL}.  While we will consider  all of the K\"uhnel-Lassmann triangulations of  $S^1 \times S^{2m-1}$ in Section \ref{Constructions}, we refer the reader to \cite{KL} for  details on the others.

\begin{thm}  \label{min nonzero betti}
   If $\Delta$ is a homology manifold without boundary and nonzero first Betti number, then for all $i, f_i(\Delta) \ge f_i(M^d).$
\end{thm}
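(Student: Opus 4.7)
The approach is to use Theorem \ref{covering 2} to bound $f_0$ and $f_1$ from below, then propagate these bounds to the remaining $f_i$ via the vertex-link sum identity combined with Kalai's Theorem \ref{Kal3} applied to each link.

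The first step will be to record $f(M^d)$ in closed form. Since $f_0(M^d) = 2d+1$ and $M^d$ is $2$-neighborly, $f_1(M^d) = d(2d+1)$. Because every vertex link of $M^d$ is a stacked $(d-2)$-sphere, Theorem \ref{walk1} places $M^d$ in $\mathcal{H}^{d-1}$, so $M^d$ arises by a single handle addition from a stacked $(d-1)$-sphere on $3d+1$ vertices. A direct computation with $\phi_i(3d+1,d) - \binom{d}{i+1}$, after one application of Pascal's identity, collapses to the clean formulas $f_i(M^d) = (2d+1)\binom{d}{i}$ for $1 \le i \le d-2$ and $f_{d-1}(M^d) = (d-1)(2d+1)$.

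Next I will dispose of the base cases $i = 0, 1$. Since $\beta_1(\Delta) > 0$, Theorem \ref{covering 2} gives $\binom{d+1}{2} \le h_2(\Delta) - h_1(\Delta) \le \binom{h_1(\Delta)}{2}$, which forces $h_1(\Delta) \ge d+1$ and hence $f_0(\Delta) \ge 2d+1$. Combining $h_2 - h_1 \ge \binom{d+1}{2}$ with the identity $\binom{d}{2} + \binom{d+1}{2} = d^2$ then yields $f_1(\Delta) = \binom{d}{2} + (d-1)h_1 + h_2 \ge d(h_1+d) = d\, f_0(\Delta) \ge d(2d+1) = f_1(M^d)$.

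For $i \ge 2$ I pass to the links via the identity $(i+1) f_i(\Delta) = \sum_v f_{i-1}(\lk v)$. Each $\lk v$ is a $(d-2)$-dimensional homology sphere, so Kalai's Theorem \ref{Kal3} (applied with dimension parameter $d-1$) gives $f_{i-1}(\lk v) \ge \phi_{i-1}(\deg v, d-1)$. Summing over $v$ and using $\sum_v \deg v = 2 f_1(\Delta)$ produces, for $2 \le i \le d-2$,
\[
(i+1) f_i(\Delta) \ge 2\binom{d-1}{i-1} f_1(\Delta) - (i-1)\binom{d}{i}\, f_0(\Delta).
\]
Substituting $f_1(\Delta) \ge d\, f_0(\Delta)$ from the previous step and using $d\binom{d-1}{i-1} = i\binom{d}{i}$ collapses this to $f_i(\Delta) \ge f_0(\Delta)\binom{d}{i} \ge (2d+1)\binom{d}{i} = f_i(M^d)$. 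The case $i = d-1$ is handled identically using $\phi_{d-2}(\deg v, d-1) = (d-2)\deg v - d(d-3)$. The main obstacle is really just the initial Theorem \ref{covering 2} bound on $h_2 - h_1$; once it is in hand the link-sum passage is essentially mechanical, and the reason $M^d$ achieves equality throughout is that its stacked vertex links simultaneously saturate Kalai's bound at every vertex.
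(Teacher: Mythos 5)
Your proof is correct and follows essentially the same route as the paper: Theorem \ref{covering 2} to get $f_0 \ge 2d+1$ and $f_1 \ge d\,f_0$, then the vertex-link sum identity combined with Kalai's Theorem \ref{Kal3} on each vertex link to handle $i \ge 2$. The only cosmetic difference is that you carry out the arithmetic explicitly (including a closed form for $f_i(M^d)$), whereas the paper packages the link sum as a function $\Phi_i$ and argues via its monotonicity; your version is if anything slightly tighter in its bookkeeping.
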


\begin{proof}
By Theorem \ref{covering 2}, $f_0(\Delta) \ge 2d+1 = f_0(M^d)$ and $f_1(\Delta) \ge \binom{2d+1}{2} = f_1(M^d).$   Define 

\begin{equation} \label{ss f def}
\tilde{f}_i(\Delta) = \displaystyle\sum^n_{j=1} f_i(\lk_\Delta v_j ).
\end{equation}

\noindent  Since $f_{i+1} = \tilde{f}_i / (i+2),$ it suffices to prove that 

\begin{equation} \label{ss f}
\tilde{f}_i(\Delta) \ge \tilde{f}_i(M^d).
\end{equation}  

Recall that $\phi_i(n,d)$ is the minimal number of $i$-faces in a $(d-1)$-dimensional  homology manifold without boundary which has $n$ vertices.  Define 
$$\Phi_i(N,n,d) =  \displaystyle\sum^n_{j=1} \phi_i (N_j,d),$$
where $N = N_1 + \dots + N_n$ is any composition of $N$ into $n$ nonzero parts.  The formula for $\phi,$ Equation (\ref{phi}), implies that this definition is independent of the choice of the $N_j.$

Now let $N_j$ be the number of vertices in the link of $v_j.$  So, $N_1 + \dots + N_n = \tilde{f}_0(\Delta) = 2 f_1(\Delta) \ge 2 f_1(M^d)= 2d(2d+1).$ Theorem \ref{Kal3} tells us that 
$$\tilde{f}_i(\Delta) \ge \displaystyle\sum^n_{j=1} \phi_i(N_j,d-1) = \Phi_i(2f_1(\Delta),f_0(\Delta),d-1).$$

   Theorem \ref{covering 2} says that $f_1(\Delta) \ge d f_0 (\Delta).$  As $\Phi_i(N,n,d)$ is monotonically increasing for fixed $n,d$ and $i, \tilde{f}_i(\Delta)$ will be minimized by the least value of $\Phi_i((d-1)n,n,d-1).$  However, for fixed $d \ge 4, 1 \le i \le d-1$ this function is strictly increasing as a  function of $n.$ Since $M^d$ minimizes $n$ and $\tilde{f}_i(M^d) = \Phi_i((d-1) n, n, d-1),$ where $n=2d+1,$ we are done.

\end{proof}

   Another way to use the rigidity  inequality is to sum it over the links of all the faces of a fixed dimension.  For this purpose we consider a generalization of the short simplicial $h$-vector introduced by Hersh and Novik in \cite{HN}.
   
   \begin{defn}
   $$\Th^{(m)}_i(\Delta) = \displaystyle\sum_{|\rho|=m} h_i( \lk \rho).$$
   \end{defn}
   
   The vector $(\Th^{(1)}_0, \dots, \Th^{(1)}_{d-1})$ was called the short simplicial $h$-vector in \cite{HN}.  
   
   \begin{prop} \cite{Sw7} 
      \begin{equation} \label{ss h}
  (m+1)\  \Th^{(m+1)}_{i-1} = i \Th^{(m)}_i + (d-m-i+1) \Th^{(m)}_{i-1}.
  \end{equation}
   \end{prop}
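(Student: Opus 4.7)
The plan is to reduce (\ref{ss h}) to a single binomial absorption identity by expanding both sides in terms of $f(\Delta)$. No manifold hypothesis is needed: the identity is a purely algebraic consequence of the defining relation (\ref{f by h}) between $h$-vectors and $f$-vectors, and will hold for any pure $(d-1)$-dimensional complex with $m+1 \le d$.

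First I would apply (\ref{f by h}) to each link. Since $\lk\rho$ has dimension $d-m-1$ when $|\rho|=m$,
$$h_i(\lk\rho) = \sum_{k=0}^{i} (-1)^{i-k} \binom{d-m-k}{d-m-i}\, f_{k-1}(\lk\rho).$$
Summing over $\rho$ and using the elementary double-count
$$\sum_{|\rho|=m} f_{k-1}(\lk\rho) = \binom{k+m}{m}\, f_{k+m-1}(\Delta)$$
(each $(k+m-1)$-face $\sigma$ of $\Delta$ contains exactly $\binom{k+m}{m}$ subsets $\rho$ of size $m$, and each such pair contributes the $(k-1)$-face $\sigma\setminus\rho$ of $\lk\rho$) yields the closed form
$$\Th^{(m)}_i = \sum_{k\ge 0} (-1)^{i-k} \binom{d-m-k}{d-m-i}\binom{k+m}{m}\, f_{k+m-1}(\Delta).$$
The analogous expression for $\Th^{(m+1)}_{i-1}$ is obtained by replacing $m$ with $m+1$ and $i$ with $i-1$, and after re-indexing $k \mapsto k-1$ its general term also involves $f_{k+m-1}(\Delta)$.

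Next I would compare the coefficient of $f_{j+m-1}(\Delta)$ on both sides of (\ref{ss h}). Using $(m+1)\binom{j+m}{m+1} = j\binom{j+m}{m}$, the LHS contributes
$$j\,(-1)^{i-j}\binom{d-m-j}{d-m-i}\binom{j+m}{m},$$
while the two RHS terms combine to
$$(-1)^{i-j}\binom{j+m}{m}\Bigl[\,i\binom{d-m-j}{d-m-i} - (d-m-i+1)\binom{d-m-j}{d-m-i+1}\,\Bigr].$$
Dividing through by the common factor reduces the proposition to
$$(i-j)\binom{d-m-j}{d-m-i} = (d-m-i+1)\binom{d-m-j}{d-m-i+1},$$
which is just the absorption rule $k\binom{n}{k}=(n-k+1)\binom{n}{k-1}$ applied with $n=d-m-j$ and $k=d-m-i+1$.

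The only real obstacle is bookkeeping: the shift between $\Th^{(m+1)}_{i-1}$, whose defining sum naturally involves $f_{k+m}(\Delta)$, and the two $\Th^{(m)}$ terms, which involve $f_{k+m-1}(\Delta)$, has to be handled carefully so that a common $f_\ell(\Delta)$ appears on both sides of (\ref{ss h}). Once the reindexing is carried out, everything reduces to a single application of absorption, and no topology, no face-ring input, and no manifold hypothesis is used.
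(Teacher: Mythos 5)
Your proof is correct. A quick summary of what you establish: expanding each $h_i(\lk\rho)$ via (\ref{f by h}) for the $(d-m-1)$-dimensional complex $\lk\rho$, then double-counting flags $\rho\subset\sigma$ to get $\sum_{|\rho|=m}f_{k-1}(\lk\rho)=\binom{k+m}{m}f_{k+m-1}(\Delta)$, yields a closed formula for $\Th^{(m)}_i$ as a linear combination of the $f_\ell(\Delta)$. After the reindexing $k\mapsto k-1$ on the left and the identity $(m+1)\binom{j+m}{m+1}=j\binom{j+m}{m}$, both sides of (\ref{ss h}) have coefficient of $f_{j+m-1}(\Delta)$ proportional to $(-1)^{i-j}\binom{j+m}{m}$, and the remaining binomial factors agree by absorption applied with $n=d-m-j$ and $k=d-m-i+1$. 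The boundary cases ($j=0$, where both coefficients vanish, and $j=i$, where the $\Th^{(m)}_{i-1}$ sum contributes nothing) also match, so the identity holds for any pure complex with $m+1\le d$, as you observe; no manifold hypothesis is needed.

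The paper itself gives no proof — it cites \cite{Sw7} — so a line-by-line comparison is not possible. One remark worth making: the identity factors cleanly through its $m=0$ instance. Since $\Th^{(0)}_j=h_j(\Delta)$, the $m=0$ case reads $\sum_v h_{i-1}(\lk v)=i\,h_i(\Delta)+(d-i+1)h_{i-1}(\Delta)$, which is the standard short simplicial $h$-vector recursion. Applying this to each $\lk\rho$ with $|\rho|=m$ (which is $(d-m-1)$-dimensional) and using $\lk_{\lk\rho}(v)=\lk_\Delta(\rho\cup\{v\})$ gives
\begin{equation*}
\sum_{|\rho|=m}\,\sum_{v\in\lk\rho}h_{i-1}\bigl(\lk_\Delta(\rho\cup\{v\})\bigr)=i\,\Th^{(m)}_i+(d-m-i+1)\,\Th^{(m)}_{i-1},
\end{equation*}
and the left side equals $(m+1)\,\Th^{(m+1)}_{i-1}$ since every $(m+1)$-element face is counted once for each of its $m+1$ vertices. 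This isolates the binomial computation to the $m=0$ step and reduces the general $m$ case to a one-line double count; your direct argument does both steps simultaneously, which is equivalent but carries more bookkeeping.
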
 
   
   As long as  all of the links in question are homology manifolds of dimension at least three, the rigidity inequality implies $\Th^{(m)}_0 \le \Th^{(m)}_1 \le \Th^{(m)}_2.$  Here are two examples of this principle.

   \begin{thm}  \label{min h}
    Suppose the link of every vertex of $\Delta$ is a $(d-2)$-dimensional  homology manifold without boundary.  Then 
    \begin{equation} \label{walkupgen}
    (d-1) h_1 \le 3 h_3 + (d-4) h_2.
    \end{equation}
      Furthermore, when  $d \ge 5,$ equality occurs if and only if $\Delta \in \mathcal{H}^{d-1}.$  In this case, the  $h$-vector of $\Delta$ is determined by $h_1$ and $h_2.$
  \end{thm}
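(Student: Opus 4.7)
The plan is to apply the rigidity inequality (Theorem \ref{rigidity}) to every vertex link and then convert the sum of those inequalities into a statement about $h(\Delta)$ via the short simplicial identity (\ref{ss h}). Since $d \ge 4$ and each vertex link $\lk v$ is a $(d-2)$-dimensional homology manifold without boundary, Theorem \ref{rigidity} gives $h_1(\lk v) \le h_2(\lk v)$; summing over all vertices yields $\Th^{(1)}_1(\Delta) \le \Th^{(1)}_2(\Delta)$. Specializing (\ref{ss h}) at $m=0$ (so that $\Th^{(0)}_j = h_j(\Delta)$) with $i = 2$ and $i = 3$ gives
$$\Th^{(1)}_1 = 2 h_2 + (d-1) h_1, \qquad \Th^{(1)}_2 = 3 h_3 + (d-2) h_2,$$
so the summed inequality is precisely $(d-1) h_1 \le 3 h_3 + (d-4) h_2$, which is (\ref{walkupgen}).

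For the equality case with $d \ge 5$: equality in the summed inequality forces $h_1(\lk v) = h_2(\lk v)$ for every vertex $v$, since each summand of $\Th^{(1)}_2 - \Th^{(1)}_1$ is nonnegative. Each link is a homology manifold without boundary of dimension $d - 2 \ge 3$, so Corollary \ref{Kal2} applies and identifies $\lk v$ as a stacked sphere. By Theorem \ref{walk1} this is exactly the condition $\Delta \in \mathcal{H}^{d-1}$, and conversely any $\Delta \in \mathcal{H}^{d-1}$ has every vertex link a stacked sphere and hence achieves equality.

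For the last assertion, I would invoke the standard fact that a stacked $(d-2)$-sphere has $h$-vector of the form $(1, s, s, \ldots, s, 1)$ (derivable directly from (\ref{phi}) by a short calculation). For $\Delta \in \mathcal{H}^{d-1}$ this gives $h_i(\lk v) = h_1(\lk v)$ for every vertex $v$ and every $1 \le i \le d-2$, so $\Th^{(1)}_{i-1} = \Th^{(1)}_1$ for $2 \le i \le d-1$. Applying (\ref{ss h}) at $m=0$ once more converts these equalities into the triangular system
$$i\, h_i(\Delta) + (d - i + 1)\, h_{i-1}(\Delta) = 2 h_2 + (d-1) h_1, \qquad 2 \le i \le d-1,$$
in which the coefficient of $h_i$ is nonzero, so $h_3, h_4, \ldots, h_{d-1}$ are determined inductively from $h_1$ and $h_2$; the remaining entry $h_d$ is then pinned down by the generalized Dehn--Sommerville equations (\ref{klee}). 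The main technical input is the stacked-sphere $h$-vector identity used in the last step, but this is a well-known consequence of (\ref{phi}); the rest of the argument is a direct application of the summing-over-links technique built on Theorems \ref{rigidity} and \ref{walk1}.
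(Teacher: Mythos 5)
Your proof is correct and follows essentially the same route as the paper: apply the rigidity inequality to each vertex link and sum to get $\Th^{(1)}_1 \le \Th^{(1)}_2$, translate via (\ref{ss h}), then characterize equality through Corollary \ref{Kal2} and Theorem \ref{walk1}, and determine the $h$-vector from the constancy of the $\Th^{(1)}_i$. The only cosmetic difference is that you pin down $h_d$ via the Dehn--Sommerville equations rather than by applying (\ref{ss h}) once more with $\Th^{(1)}_{d-1} = n = h_1 + d$, which is the route the paper gestures at when it says ``(\ref{ss h}) insures that this determines the entire $h$-vector.''
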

  
  \begin{proof}
  From the previous proposition, $\Th^{(1)}_1 = 2 h_2 + (d-1) h_1$ and $\Th^{(1)}_2 = 3 h_3 + (d-2) h_2.$  The rigidity inequality applied to these two equations gives (\ref{walkupgen}).  In addition,  equality occurs if and only if  for each vertex $v$ of $\Delta, h_1(\lk v) = h_2 (\lk v).$  By Corollary \ref{Kal2} each such link must be  a stacked sphere   and thus $h_1(\lk v) = h_2(\lk v) = \cdots = h_{d-2}(\lk v).$  Hence $\Th^{(1)}_1 = \Th^{(1)}_2 = \cdots = \Th^{(1)}_{d-2}.$  Since $h_1$ and $h_2$ determine $\Th^{(1)}_1$ they determine all of the $\Th^{(1)}_i.$ It is not difficult to see that (\ref{ss h}) insures that this determines the entire $h$-vector.  Finally, the link of every vertex  of $\Delta$ is a stacked sphere if and only if $\Delta \in \mathcal{H}^{d-1}.$
  
  \end{proof}
  
  The above theorem is optimal in the following sense.  When $d=4$ (\ref{walkupgen}) reduces to $h_1 \le h_3.$ Both $h_1 = h_3$ (any homology manifold without boundary) and $h_1 < h_3$ (for instance, the suspension of $\R P^2$) can occur.  For $d>4,$ any triangulation in $\mathcal{H}^{d-1}$ satisfies equality, including, for instance, the K\"{u}hnel-Lassmann triangulations of $S^1 \times S^{2m+1}$ and the nonorientable $S^{2m}$ bundle over $S^1$ \cite{KL},  stacked spheres, and connected sums along facets of any of these spaces.

  When $d=5$ (\ref{walkupgen}) becomes
  $$\begin{array}{lcl}
  4 h_1 & \le & 3h_3 + h_2\\
  \Rightarrow 4(h_1 - h_2) & \le & 3(h_3 - h_2).
  \end{array}$$
Any three manifold without boundary has Euler characteristic zero, so $\Delta$ is semi-Eulerian and we can substitute $10(\chi(\Delta) - 2)$ for $h_3 - h_2.$ Hence,
 \begin{equation} \label{walkup}
 h_2 - h_1 \ge -\frac{15}{2} (\chi(\Delta)-2).
 \end{equation}

 This formula first appears in \cite{Wal}, as does the characterization of equality.   As Walkup's proof is logically equivalent to  the one above, (\ref{walkupgen}) can be viewed as a higher dimensional analog of (\ref{walkup}).  For another example, we consider $m=2.$

 \begin{thm}  
   If the link of every edge is a homology manifold without boundary and $d \ge 5,$  then 
   \begin{equation} \label{walkupgen2}
   12 h_4 +6(d-4) h_3 + (d-2)(d-7) h_2 - (d-1)(d-2) h_1 \ge 0.
   \end{equation}
 \end{thm}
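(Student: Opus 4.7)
The plan is to mirror the proof of Theorem \ref{min h} exactly, only now applying the rigidity inequality to links of edges rather than links of vertices. Since the link of an edge $\rho$ has dimension $d-3$, the rigidity inequality (Theorem \ref{rigidity}) requires $d-2 \ge 3$, which is precisely the hypothesis $d \ge 5$. Under this hypothesis every edge link is a homology manifold without boundary of the right dimension, so summing the inequality $h_1(\lk \rho) \le h_2(\lk \rho)$ over all edges $\rho$ yields $\Th^{(2)}_1 \le \Th^{(2)}_2$.

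The next step is to express $\Th^{(2)}_1$ and $\Th^{(2)}_2$ in terms of $h_1,h_2,h_3,h_4$ using the recurrence (\ref{ss h}) twice. Starting from $\Th^{(0)}_i = h_i$, one application gives the known formulas $\Th^{(1)}_1 = 2h_2 + (d-1)h_1$, $\Th^{(1)}_2 = 3h_3 + (d-2)h_2$, and $\Th^{(1)}_3 = 4h_4 + (d-3)h_3$. A second application of (\ref{ss h}) with $m=1$, taking $i=2$ and $i=3$, then gives
\begin{equation*}
2\,\Th^{(2)}_1 = 2\,\Th^{(1)}_2 + (d-2)\,\Th^{(1)}_1 = 6h_3 + 4(d-2)h_2 + (d-1)(d-2)h_1,
\end{equation*}
\begin{equation*}
2\,\Th^{(2)}_2 = 3\,\Th^{(1)}_3 + (d-3)\,\Th^{(1)}_2 = 12h_4 + 6(d-3)h_3 + (d-2)(d-3)h_2.
\end{equation*}

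Subtracting the first from the second and collecting the coefficient of $h_3$ as $6(d-3)-6 = 6(d-4)$ and the coefficient of $h_2$ as $(d-2)(d-3) - 4(d-2) = (d-2)(d-7)$ yields exactly
\begin{equation*}
2\bigl(\Th^{(2)}_2 - \Th^{(2)}_1\bigr) = 12h_4 + 6(d-4)h_3 + (d-2)(d-7)h_2 - (d-1)(d-2)h_1,
\end{equation*}
and the rigidity inequality makes the left-hand side nonnegative, which is (\ref{walkupgen2}).

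There is no real obstacle: the proof is a direct analogue of Theorem \ref{min h}, and the only thing to verify is that the recursive bookkeeping produces the claimed coefficients. One might additionally expect, by analogy with the characterization of equality in Theorem \ref{min h} via Corollary \ref{Kal2}, that equality here forces every edge link to be a stacked $(d-3)$-sphere (i.e., $\Delta$ is a ``two-stacked'' complex in an appropriate sense); but that is not claimed in the statement and would require separate argument.
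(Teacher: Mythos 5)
Your proposal is correct and follows exactly the paper's argument: both apply the recurrence (\ref{ss h}) twice to express $2\Th^{(2)}_1$ and $2\Th^{(2)}_2$ in terms of $h_1,\dots,h_4$, then sum the rigidity inequality over edge links to conclude $\Th^{(2)}_1 \le \Th^{(2)}_2$. Your bookkeeping matches the paper's coefficients, and your explicit note that $d\ge 5$ is what makes the $(d-3)$-dimensional edge links eligible for Theorem \ref{rigidity} is a welcome clarification that the paper leaves implicit.
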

 
 \begin{proof}
 From (\ref{ss h}), $2 h^{(2)}_1 = 2 h^{(1)}_2 + (d-2)h^{(1)}_1$ and $2h^{(2)}_2 = 3 h^{(1)}_3 + (d-3) h^{(1)}_2.$  Applying (\ref{ss h}) again to the right hand side of these equations,
 $$\begin{array}{lcl}
   2 h^{(2)}_1&  = & 2[3 h_3 + (d-2) h_2]+(d-2)[2 h_2 + (d-1) h_1] \\
   2h^{(2)}_2 & = & 3[4 h_4 + (d-3) h_3] + (d-3) [ 3h_3 + (d-2) h_2].
\end{array} $$                 
 
 The rigidity inequality implies $2h^{(2)}_2 \ge 2 h^{(2)}_1.$ 
  \end{proof}
  
  \begin{cor}
  If $\Delta$ is a $6$-dimensional homology manifold without boundary, then
  $$\chi(\Delta) \le 2 + \frac{1}{14} (h_3 - h_1).$$
  Furthermore,  $\chi(\Delta) = 2 + \frac{1}{14} (h_3 - h_1),$ if and only if $\Delta \in \mathcal{H}^{d-1}.$
    \end{cor}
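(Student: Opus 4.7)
The approach is to specialize Theorem (\ref{walkupgen2}) to $d=7$ (since $\Delta$ is $6$-dimensional) and combine the resulting inequality with the generalized Dehn-Sommerville equation in order to eliminate $h_4$ in favor of $h_3$ and $\chi(\Delta)$.

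First, I would substitute $d=7$ into (\ref{walkupgen2}). The coefficient $(d-2)(d-7)$ of $h_2$ vanishes, and the inequality collapses to $12 h_4 + 18 h_3 - 30 h_1 \ge 0$, i.e.\ $2h_4 + 3h_3 \ge 5 h_1$. Next, since $\Delta$ is semi-Eulerian (being a homology manifold without boundary), Klee's equation (\ref{klee}) with $i=3$ gives
$$h_4 - h_3 = (-1)^3 \binom{7}{3}(\chi(\Delta) - 2) = -35(\chi(\Delta)-2).$$
Substituting $h_4 = h_3 - 35(\chi(\Delta)-2)$ into the inequality yields $5h_3 - 70(\chi(\Delta)-2) \ge 5h_1$, which rearranges to $\chi(\Delta) \le 2 + (h_3 - h_1)/14$, as desired.

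For the equality characterization, the plan is to trace back through the proof of (\ref{walkupgen2}): equality holds there precisely when the rigidity inequality is tight on every edge link, which by Corollary \ref{Kal2} is equivalent to every edge link of $\Delta$ being a stacked sphere. So it suffices to show that this condition is equivalent to $\Delta \in \mathcal{H}^6$. One direction is easy: if $\Delta \in \mathcal{H}^6$, then by Theorem \ref{walk1} every vertex link of $\Delta$ is a stacked sphere, and since stacked spheres themselves lie in $\mathcal{H}^{d-2}$, applying Theorem \ref{walk1} a second time shows every edge link (being a vertex link of a vertex link) is a stacked sphere.

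The main obstacle is the converse. Suppose every edge link of $\Delta$ is a stacked sphere. For each vertex $v$, the identification $\lk_{\lk v} w = \lk_\Delta(\{v,w\})$ implies every vertex link of $\lk v$ is a stacked sphere, and Theorem \ref{walk1} then gives $\lk v \in \mathcal{H}^5$. The subtle point is upgrading this to the statement that $\lk v$ is itself a stacked sphere rather than a nontrivial handle addition of one: this follows because $\lk v$ is a homology $5$-sphere, and hence has trivial first rational Betti number, whereas each handle addition strictly increases $\beta_1$; therefore no handles were added and $\lk v$ is a stacked sphere. Another application of Theorem \ref{walk1} then yields $\Delta \in \mathcal{H}^6$, completing the equivalence.
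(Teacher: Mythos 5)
Your derivation of the inequality is exactly the paper's: set $d=7$ in (\ref{walkupgen2}), the $h_2$ term vanishes, and substituting $h_4-h_3 = -\binom{7}{3}(\chi-2)$ from the generalized Dehn--Sommerville equations yields $\chi(\Delta) \le 2 + \frac{1}{14}(h_3-h_1)$. The equality characterization, however, uses a genuinely different upgrade step. Both you and the paper first reduce equality in (\ref{walkupgen2}) to the condition that every edge link $\lk \rho$ satisfies $h_1(\lk\rho)=h_2(\lk\rho)$ (the rigidity inequality forces termwise equality in $\tilde h^{(2)}_1 = \tilde h^{(2)}_2$). To get from there to ``each $\lk v$ is a stacked sphere,'' the paper appeals to the internals of Kalai's first proof of \cite[Theorem 1.1]{Kal2}, using only that the vertex links of $\Delta_v$ have $h_1=h_2$ together with $H_1(\Delta_v;\Q)=0$. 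You instead stay entirely within the results quoted in the paper: Corollary \ref{Kal2} turns the $h_1=h_2$ condition for edge links into ``every vertex link of $\lk v$ is a stacked sphere,'' Theorem \ref{walk1} then gives $\lk v\in\mathcal H^5$, and the topological observation that each handle addition raises $\beta_1$ by one (since for a connected $(d-1)$-manifold with $d-1\ge 2$, handle addition is connected sum with an $S^{d-2}$-bundle over $S^1$) forces $\lk v$ to be a stacked sphere because $\lk v$ is a homology sphere with $\beta_1=0$. A final application of Theorem \ref{walk1} gives $\Delta\in\mathcal H^6$. Your route is a bit longer but is self-contained: it replaces reliance on the particulars of Kalai's argument by the well-known fact about handle addition and $\beta_1$, which the paper does not state but is elementary. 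The easy direction of your proof (iterating Theorem \ref{walk1} through vertex links) matches the paper's.
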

  
  \begin{proof}
  Setting $d=7,$ (\ref{walkupgen2}) is equivalent to
  $$12(h_4 - h_3) \ge 30(h_1 - h_3).$$
  By the generalized Dehn-Sommerville equations $h_4 - h_3 = -35(\chi(\Delta) -2).$ So,
  $$\chi(\Delta) - 2 \le \frac{30}{12 \cdot 35}(h_3 - h_1).$$
 
 Now suppose $\chi(\Delta) = 2 + \frac{1}{14} (h_3 - h_1)$.  Let $\Delta_v$ be the link of a vertex of $\Delta.$  The proof of (\ref{walkupgen2}) shows that for every vertex $w \in \Delta_v,$ $h_1(\lk_{\Delta_v} w) = h_2(\lk_{\Delta_v} w).$ Indeed, equality holds if and only if this is true.   Since $\Delta$ is a homology manifold, $H_1(\Delta_v; \Q)=0.$  Kalai's first proof of  \cite[Theorem 1.1]{Kal2} shows that $\Delta_v$ is  a stacked sphere and thus $\Delta \in \mathcal{H}^{d-1}.$  Conversely, suppose $\Delta \in \mathcal{H}^{d-1}.$ Since the link of every vertex of a stacked sphere is a stacked sphere, $h^{(2)}_1 = h^{(2)}_2$ and equality in (\ref{walkupgen2}) follows.
  \end{proof}
  
  \begin{cor}
    If $\Delta$ is a $6$-dimensional Eulerian homology manifold,  then the $h$-vector of $\Delta$ is positive.
  \end{cor}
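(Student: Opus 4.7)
The plan is to combine the generalized Dehn-Sommerville equations with the rigidity inequality and the corollary immediately preceding the statement. Since $\Delta$ is $6$-dimensional, $d=7$, and the Eulerian hypothesis gives $\chi(\Delta) = \chi(S^6) = 2$. First, I would invoke (\ref{klee}) to conclude $h_i = h_{7-i}$ for all $i$, so it suffices to establish $h_0, h_1, h_2, h_3 > 0$. The relation $h_0 = 1$ is automatic, so the focus reduces to $h_1, h_2, h_3$.

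Next, I would apply the rigidity inequality (Theorem \ref{rigidity}) to obtain $1 = h_0 \le h_1 \le h_2$, which handles positivity of $h_1$ and $h_2$ (and therefore of $h_6 = h_1$ and $h_5 = h_2$ via Dehn-Sommerville). The only remaining case is $h_3$, but this is exactly where the preceding corollary does the work: it states that $\chi(\Delta) - 2 \le \tfrac{1}{14}(h_3 - h_1)$, so substituting $\chi(\Delta) = 2$ yields $h_3 \ge h_1 \ge 1 > 0$. Dehn-Sommerville then gives $h_4 = h_3 > 0$.

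There is no real obstacle here — every step is an immediate consequence of a tool already on the table. The only minor thing to check is that the dimensional hypothesis $d \geq 5$ required for the rigidity inequality (and hence for the previous corollary) is satisfied, which it is since $d=7$. So the whole argument is essentially a three-line combination: Dehn-Sommerville collapses the problem to $h_0, h_1, h_2, h_3$; rigidity handles the first three; and the previous corollary, specialized to $\chi = 2$, handles $h_3$.
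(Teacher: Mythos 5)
Your argument is correct and matches the route the paper implicitly intends: since $\Delta$ is Eulerian, $\chi(\Delta)=2$, so the generalized Dehn-Sommerville equations give $h_{7-i}=h_i$; rigidity yields $1=h_0\le h_1\le h_2$; and the immediately preceding corollary, with $\chi(\Delta)-2=0$, forces $h_3\ge h_1\ge 1$, so all eight entries are positive. One small slip: the rigidity inequality requires only $d\ge 3$, not $d\ge 5$ (the $d\ge 5$ hypothesis belongs to Theorem \ref{min h} and the edge-link inequality), but this is immaterial here since $d=7$.
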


   For further estimates we turn to the face ring.
  By introducing a linear system of parameters, the face ring can be a powerful tool in understanding the combinatorics of $\Delta$.     Let $\Theta=\{\theta_1,\dots,\theta_d\}$ be a set of one forms in $R=k[x_1, \dots, x_n].$  For each $i,$ write $\theta_i = \theta_{i,1} x_1 + \dots + \theta_{i,n} x_n$ and for each facet $\sigma \in \Delta$ let $T_\sigma$ be the $d \times d$ matrix whose entries are $\{\theta_{i,j}\}_{v_j \in \sigma}.$ We say $\Theta$ is a {\bf linear system of parameters} (l.s.o.p.) for $k[\Delta]$ if $T_\sigma$ has rank $d$ for every facet $\sigma$ of $\Delta.$         
 
 \begin{thm}(Schenzel's formula) \cite{Sche} \label{schen}
 
 Let $\Theta$ be a l.s.o.p. for $k[\Delta],$  and let $h^\prime_i =\dim_k  (k[\Delta]/\langle \Theta \rangle)_i.$ If $\Delta$ is a $k$-homology manifold (with or without boundary), then
 
 \begin{equation} \label{schenzel}
  h^\prime_i = h_i + \binom{d}{i} \displaystyle\sum^{i-1}_{j=2} (-1)^{i-j-1} \beta_{j-1}, \end{equation}
 where the $\beta_j$ are the $k$-Betti numbers of $\Delta.$
 \end{thm}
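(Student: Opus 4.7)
The plan is to exploit the Buchsbaum structure of $k[\Delta]$, which is automatic for homology manifolds, and to apply the Hilbert-series identity for Buchsbaum algebras modulo a linear system of parameters.

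\textbf{Step 1: Local cohomology of $k[\Delta]$.} By Reisner's formula,
$$\dim_k H^j_\mathfrak{m}(k[\Delta])_{-\mathbf{a}} \;=\; \sum_{\sigma\in\Delta,\;\sigma\supseteq\mathrm{supp}(\mathbf{a})}\dim_k\tilde H^{j-|\sigma|-1}(\lk\sigma;k).$$
Since $\Delta$ is a $k$-homology manifold, the link of every nonempty face $\sigma$ is either a homology sphere or a homology ball of dimension $d-|\sigma|-1$, whose reduced cohomology is concentrated in top degree. Consequently, for $j<d$ only the $\sigma=\emptyset$ summand contributes, so $H^j_\mathfrak{m}(k[\Delta])$ is concentrated in degree $0$ with $\dim_k H^j_\mathfrak{m}(k[\Delta])_0=\tilde\beta_{j-1}(\Delta;k)$. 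In particular each $H^j_\mathfrak{m}(k[\Delta])$ has finite length for $j<d$, which is exactly the Buchsbaum condition.

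\textbf{Step 2: The Schenzel Hilbert-series identity.} For any $d$-dimensional graded Buchsbaum $k$-algebra $R$ with one-form l.s.o.p.\ $\Theta$,
$$F(R/\Theta R,\lambda)\;=\;(1-\lambda)^d F(R,\lambda)\;+\;\sum_{i=0}^{d-1}\binom{d}{i}\lambda^i\sum_{j=0}^{i-1}(-1)^{i-j-1}\ell\bigl(H^j_\mathfrak{m}(R)\bigr).$$
This is derived by induction on $d$ using the short exact sequence
$$0\to H^0_\mathfrak{m}(M)(-1)\to M(-1)\stackrel{\theta}{\to} M\to M/\theta M\to 0,$$
which is valid for Buchsbaum $M$ because a parameter annihilates exactly the submodule $H^0_\mathfrak{m}(M)$. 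Passing to $(M/\theta M)/H^0_\mathfrak{m}(M/\theta M)$ produces a $(d-1)$-dimensional Buchsbaum module whose $j$-th local cohomology has length $\ell(H^j_\mathfrak{m}(M))+\ell(H^{j+1}_\mathfrak{m}(M))$, and a direct manipulation of Hilbert series closes the induction.

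\textbf{Step 3: Extraction.} The coefficient of $\lambda^i$ in $(1-\lambda)^d F(k[\Delta],\lambda)$ equals $h_i$ by definition. Substituting $\ell(H^j_\mathfrak{m}(k[\Delta]))=\tilde\beta_{j-1}$ from Step 1 into Step 2, and noting that $\tilde\beta_{-1}=0$ and $\tilde\beta_0=0$ (the latter by the standing connectedness assumption on $\Delta$), collapses the inner sum to the range $j=2,\ldots,i-1$ and produces formula~(\ref{schenzel}).

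The main obstacle is Step 2 — executing the inductive derivation of the Buchsbaum Hilbert-series identity with the correct binomial coefficients and alternating signs. The delicate point is verifying that each successive quotient remains Buchsbaum and that the lengths of the shifted local cohomology modules accumulate in the predicted alternating fashion; once this identity is in place, Steps 1 and 3 are a clean application of Reisner's theorem together with the hypothesis on links.
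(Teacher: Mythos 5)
The paper does not reprove Schenzel's formula; it cites \cite{Sche} and remarks that Schenzel's argument applies to all connected Buchsbaum complexes. Your outline reconstructs exactly that argument --- recognize $k[\Delta]$ as Buchsbaum via Hochster's local cohomology formula, then induct down the l.s.o.p.\ using the four-term exact sequence attached to a parameter --- so the route coincides with the cited proof.

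Two points do need repair. The Hochster formula in Step~1 is misquoted: for $\mathbf{a} \le 0$ one has $H^j_{\mathfrak{m}}(k[\Delta])_{\mathbf{a}} \cong \tilde H^{\,j-|\sigma|-1}(\lk\sigma;k)$ with $\sigma = \mathrm{supp}(\mathbf{a})$ when $\sigma\in\Delta$, and zero otherwise; there is no sum over faces containing $\mathrm{supp}(\mathbf{a})$. Your conclusion --- that for $j<d$ only $\sigma=\emptyset$ contributes, so $H^j_{\mathfrak{m}}$ sits in degree $0$ with dimension $\tilde\beta_{j-1}$ --- survives this correction, and is precisely what certifies the Buchsbaum property. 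Second, the Step~2 identity, written with the raw lengths $\ell(H^j_{\mathfrak{m}}(R))$, is not a theorem about arbitrary graded Buchsbaum algebras: the correction term in general depends on the graded Hilbert series of each $H^j_{\mathfrak{m}}$, not merely its length. It is valid here exactly because Step~1 places the sub-top local cohomology in degree $0$, so you should either restrict the claim to that situation or formulate it with graded Hilbert series throughout. Finally, as you yourself flag, the facts that $(M/\theta M)/H^0_{\mathfrak{m}}(M/\theta M)$ is again Buchsbaum and that its lower local cohomology accumulates as $H^j_{\mathfrak{m}}(M)\oplus H^{j+1}_{\mathfrak{m}}(M)$ are theorems of St\"uckrad--Vogel and Schenzel; they should be cited rather than presented as following directly from the displayed exact sequence. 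With those repairs the proposal is sound and matches the proof the paper points to.
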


Schenzel's proof of the above formula applies to the much more general class of connected  Buchsbaum complexes.
As an application of (\ref{schenzel}), we note that if $\Delta$ is a homology manifold without boundary, then $h^\prime_d(\Delta) = 1$ if $\Delta$ is orientable and $h^\prime_d(\Delta) = 0 $ if $\Delta$ is not orientable.

Schenzel's formula frequently allows us to move back and forth between the commutative algebra of $k[\Delta]/\langle \Theta \rangle,$ and the combinatorics of $\Delta.$  As an example, the rigidity inequality has an interpretation in $k[\Delta]/\langle \Theta \rangle$ due to Lee. 

 \begin{thm} \cite{Le2} \label{injectivity}
    Let $\Delta$ be a  homology manifold without boundary and let $k$ be a field of characteristic zero.  For generic pairs $(\omega,\Theta), \omega$ a one form in $R$ and $\Theta$ a l.s.o.p. for $k[\Delta],$ multiplication
    $$\omega: (k[\Delta]/\langle \Theta \rangle)_1 \to (k[\Delta]/\langle \Theta \rangle)_2$$
    is an injection.
  \end{thm}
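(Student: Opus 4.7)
The plan is to upgrade the rigidity inequality of Theorem~\ref{rigidity} to the stated algebraic injectivity statement via the dictionary between the face ring quotient and the rigidity theory of bar-and-joint frameworks. First, Schenzel's formula~(\ref{schenzel}) gives $h'_1 = h_1$ and $h'_2 = h_2$, because the correction sum $\sum_{j=2}^{i-1}(-1)^{i-j-1}\beta_{j-1}$ is empty for $i \le 2$. So the source and target of multiplication by $\omega$ have dimensions $h_1$ and $h_2$, and the rigidity inequality provides the necessary dimensional room. Moreover, injectivity of $\omega\cdot$ for a pair $(\omega,\Theta)$ is a Zariski open condition on parameter space, so it suffices to exhibit a single witnessing pair.

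To produce one, specialize $\Theta$ so that it arises from a generic point configuration $p\colon V \to k^d$, setting $\theta_i = \sum_j p_{ij} x_j$, and choose $\omega = \sum_j \omega_j x_j$ with generic coefficients. After passing to a basis of vertex-classes, $(k[\Delta]/\langle \Theta \rangle)_1$ is identified with $k^n$ modulo the row span of the $d \times n$ matrix $(p_{ij})$. Multiplication by $\omega$ sends the class of $x_v$ to $\omega_v x_v$ modulo the quadratic monomials $x_v x_w$ for edges $\{v,w\}$ of $\Delta$; unwinding this through the l.s.o.p.\ relations expresses the map, up to generic change of coordinates, as the transpose of the rigidity matrix of the framework $(G(\Delta), p)$, where $G(\Delta)$ is the $1$-skeleton. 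In this language, injectivity of $\omega\cdot$ becomes equivalent to generic $d$-rigidity of $(G(\Delta),p)$ modulo the $d$-dimensional space of infinitesimal trivial motions.

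The final input is Fogelsanger's theorem, which is already the key tool behind Kalai's proof of Theorem~\ref{rigidity}: the $1$-skeleton of any closed homology manifold of dimension at least two is generically $d$-rigid. Applied through the dictionary above, this gives injectivity of $\omega\cdot\colon (k[\Delta]/\langle \Theta\rangle)_1 \to (k[\Delta]/\langle \Theta \rangle)_2$ for generic $(\omega,\Theta)$. The main obstacle is the careful setup of that dictionary between face-ring multiplication and the rigidity matrix; this is the technical heart of Lee's argument in \cite{Le2}, and the one subtlety is matching the kernel of the rigidity map (trivial motions) with the image of degree $0$ under $\omega\cdot$ so that the passage to $(k[\Delta]/\langle \Theta\rangle)_1$ lands cleanly. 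Once that translation is pinned down, Fogelsanger's theorem is invoked as a black box and the injectivity follows.
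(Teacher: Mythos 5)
The paper does not actually prove Theorem~\ref{injectivity}; it is stated with the citation to Lee \cite{Le2} and used thereafter as a black box. So there is no proof in the paper to compare against — the relevant question is simply whether your sketch is a sound reconstruction of the cited result. In outline it is: the route you describe (Schenzel's formula to identify $h'_1 = h_1,\ h'_2 = h_2$; the observation that injectivity is Zariski open; the translation of the face-ring Artinian reduction into the language of bar-and-joint frameworks; and generic $d$-rigidity of the $1$-skeleton of a closed homology manifold) is exactly the mechanism behind Lee's theorem, and the pieces fit together to give the stated injectivity.

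Two caveats. First, the ``dictionary'' you invoke is the whole content of the theorem being proved, and you do not actually establish it; your description (sending $[x_v]$ to ``$\omega_v x_v$ modulo quadratic monomials'' and obtaining ``the transpose of the rigidity matrix after a generic change of coordinates'') is not literally correct as written — the shapes do not even match, since the rigidity matrix is $f_1 \times d f_0$ while the multiplication map is $h_2 \times h_1$. The correct bridge, due to Lee, identifies $\dim_k\bigl(k[\Delta]/\langle\Theta,\omega\rangle\bigr)_2$ with the dimension of the space of stresses of the framework determined by $\Theta$ and $\omega$; injectivity of $\omega:(k[\Delta]/\langle\Theta\rangle)_1 \to (k[\Delta]/\langle\Theta\rangle)_2$ is equivalent to this stress dimension equalling $h_2 - h_1 = f_1 - d f_0 + \binom{d+1}{2}$, which is precisely what generic $d$-rigidity gives. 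As a proof proposal, deferring this identification to the very reference you are meant to replace leaves the argument hollow. Second, a historical nit: Kalai's proof of Theorem~\ref{rigidity} in \cite{Kal2} predates Fogelsanger's thesis and proceeds by a Barnette-style induction together with his cone lemma; Fogelsanger's theorem yields an independent (and stronger, since it handles pseudomanifolds) proof of generic rigidity, but it is not ``the key tool behind Kalai's proof.'' Neither point indicates a wrong approach — the intended argument is correct — but as written this is a sketch that relies on the theorem it purports to prove for its central step.
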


 In view of Theorem \ref{schen}, the study of  $f$-vectors, $h$-vectors and $h^\prime$-vectors, where we define $h^\prime_i$ by (\ref{schenzel}), are all equivalent for homology manifolds.  The value of the $h^\prime$-vectors is that for homology manifolds $(h^\prime_0, h^\prime_1, \dots, h^\prime_d)$ is the Hilbert function of $k[\Delta]/\langle \Theta \rangle,$ and Hilbert functions of homogeneous quotients   of polynomial rings were characterized by Macaulay.
 
 Given $a$ and $i$ positive integers there is a unique way to  write
$$a = \binom{a_i}{i} + \binom{a_{i-1}}{i-1} + \dots + \binom{a_j}{j},$$
with $a_i > a_{i-1} > \dots > a_j \ge j \ge 1.$

Define 
$$a^{<i>} = \binom{a_i+1}{i+1} + \binom{a_{i-1}+1}{i} + \dots + \binom{a_j+1}{j+1}.$$

\begin{thm} \cite[II.2.2]{St} \label{Macaulay}
  Let $(h_0, \dots, h_d)$ be a sequence of nonnegative integers. Then the following are equivalent.
  
  \begin{itemize}
     \item
       $(h_0, \dots, h_d)$ is the Hilbert function of a homogeneous quotient of a polynomial ring.
     
     \item
       $h_0=1$ and $h_{i+1} \le h^{<i>}_i$ for all $1 \le i \le d-1.$
   \end{itemize}
 \end{thm}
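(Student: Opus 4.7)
My plan is to establish the equivalence via the theory of \emph{lex-segment ideals} in a polynomial ring $S = k[x_1, \ldots, x_N]$ with $N$ chosen sufficiently large. A lex-segment ideal is a monomial ideal $L$ whose monomials in each degree form an initial segment with respect to the lexicographic order $x_1 > x_2 > \cdots > x_N$. The two structural facts I would prove are: (a) among all homogeneous ideals giving a prescribed Hilbert function in degrees $\le i$, the lex-segment ideal maximizes the dimension in degree $i+1$; and (b) for any lex-segment ideal $L$, if $\dim_k (S/L)_i = h_i$, then $\dim_k (S/L)_{i+1}$ equals exactly $h_i^{<i>}$.

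For (a), I would first reduce to monomial ideals by passing to an initial ideal with respect to any term order, which preserves the Hilbert function of the quotient. Then I would run the classical compression argument, replacing a monomial ideal degree by degree with its lex-segment counterpart. The key combinatorial claim is that if $M$ and $L$ are monomial ideals with $(S/M)_i$ and $(S/L)_i$ of the same dimension and $L$ is lex in degree $i$, then $\dim_k (S/M)_{i+1} \le \dim_k (S/L)_{i+1}$; this is provable by an exchange argument on monomials (essentially the Clements--Lindstr\"om shifting theorem). For (b), the count is direct: writing $h_i$ in its cascade form $h_i = \binom{a_i}{i} + \binom{a_{i-1}}{i-1} + \cdots + \binom{a_j}{j}$, the initial lex segment of degree-$i$ monomials of size $h_i$ has the property that multiplying by $x_1, \ldots, x_N$ and then discarding duplicates produces exactly $h_i^{<i>}$ degree-$(i+1)$ monomials; the inductive structure of the cascade representation matches the recursive decomposition of the lex segment by leading variable. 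Combining (a) and (b) yields the forward implication $h_{i+1} \le h_i^{<i>}$.

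For the converse, given a sequence $(h_0, \ldots, h_d)$ with $h_0 = 1$ satisfying the Macaulay bounds, I would construct an ideal realizing it by declaring, in each degree $i$, the degree-$i$ part of the ideal to be the unique lex-initial set of monomials of size $\binom{N-1+i}{i} - h_i$ in $S$. The inequality $h_{i+1} \le h_i^{<i>}$ is \emph{precisely} what ensures compatibility between consecutive degrees: the complement of the lex segment in degree $i$ has size $h_i$, produces at most $h_i^{<i>}$ distinct monomials when multiplied by $x_1, \ldots, x_N$, and so can be extended to a lex complement of size $h_{i+1}$ in degree $i+1$. Consequently the chosen degree pieces assemble into a genuine homogeneous ideal whose quotient has the prescribed Hilbert function. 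The main obstacle is the compression step in (a); the cleanest execution either invokes Clements--Lindstr\"om directly or argues via a carefully chosen order-preserving swap, and this is the only nontrivial combinatorial content of the theorem.
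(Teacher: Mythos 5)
The paper does not prove this statement; it simply cites it from Stanley's book (\cite[II.2.2]{St}), and your lex-ideal/compression argument is exactly the standard proof appearing there. Your outline is sound; the only point worth tightening is in step (b), where the claim should be that a lex segment with \emph{no new generators in degree $i+1$} (i.e.\ $L_{i+1}=S_1L_i$ with $L_i$ lex) has $\dim_k(S/L)_{i+1}=h_i^{<i>}$, so that this is the \emph{maximum} possible growth rather than the value for an arbitrary lex ideal.
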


\noindent  Any sequence $(h_0, \dots, h_d)$ which satisfies the above conditions is called an {\it M-vector}.

 In combination with our previous results and the following theorem, Macaulay's formula leads to restrictions on $n$ and  $h_2-h_1$ for $2m$-dimensional homology manifolds without boundary. 

\begin{thm} \cite{No} \label{kernel}
  Let $\Delta$ be a homology manifold, $\Theta$ a l.s.o.p. for $k[\Delta],$ and $\omega \in R_1$  (one forms in $R$). Then the kernel of multiplication $\omega: (k[\Delta]/\langle \Theta \rangle)_i \to (k[\Delta]/\langle \Theta \rangle)_{i+1}$ has dimension greater than or equal to $\binom{d-1}{i} \beta_{i-1}.$
\end{thm}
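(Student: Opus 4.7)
The plan is to extract the kernel bound from local cohomology, using the Buchsbaum structure of $k[\Delta]$ together with a clever recomputation of Hilbert functions after adjoining $\omega$ to the l.s.o.p.

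First I would nail down the local cohomology of $k[\Delta]$ via Reisner's formula. For any $\mathbf{a}\in\mathbb{Z}^n$ supported on a face $F$, the graded piece $H^i_{\mathfrak{m}}(k[\Delta])_{-\mathbf{a}}$ is $\tilde H^{i-|F|-1}(\lk F;k)$. When $\Delta$ is a $(d-1)$-dimensional $k$-homology manifold, every link $\lk F$ is a $k$-homology sphere of dimension $d-|F|-1$, so its reduced cohomology vanishes except in that top degree. A degree count then forces the pieces with $|F|\ge 1$ to vanish whenever $i<d$, leaving
\[
\dim_k H^i_{\mathfrak{m}}(k[\Delta])_0 = \tilde\beta_{i-1}(\Delta), \qquad i<d,
\]
and $H^i_{\mathfrak{m}}(k[\Delta])$ is annihilated by the irrelevant ideal. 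Thus $k[\Delta]$ is Buchsbaum, and Schenzel's formula (\ref{schenzel}) is just the graded dimension formula for its Artinian reduction, obtained from the Koszul-complex spectral sequence whose $E_2$-page for the $\theta_i$-action on local cohomology collapses because $\mathfrak{m}\cdot H^j_{\mathfrak{m}}=0$ for $j<d$.

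Second, I would exploit the symmetry between $\omega$ and the $\theta_j$. For a sufficiently generic choice, any $d$-element subset of $\{\theta_1,\dots,\theta_d,\omega\}$ is again an l.s.o.p.\ for $k[\Delta]$, and Schenzel's formula gives the same Hilbert function for each resulting Artinian reduction. Setting $\bar A=k[\Delta]/\langle\Theta\rangle$, the exact sequence
\[
0\to \ker(\omega)_i \to \bar A_i \xrightarrow{\omega} \bar A_{i+1} \to (\bar A/\omega\bar A)_{i+1} \to 0
\]
converts the desired bound into a lower bound on $\dim(k[\Delta]/\langle\Theta,\omega\rangle)_{i+1}$, since
\[
\dim\ker(\omega)_i = \dim(k[\Delta]/\langle\Theta,\omega\rangle)_{i+1} - h'_{i+1} + h'_i.
\]

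Third, I would compute the Hilbert function of the doubly-reduced ring $k[\Delta]/\langle\Theta,\omega\rangle$ by iterating the Buchsbaum spectral sequence one more time, now with respect to the $(d+1)$ generic linear forms $\Theta\cup\{\omega\}$. Because each $H^j_{\mathfrak{m}}(k[\Delta])$ with $j<d$ is $\mathfrak{m}$-annihilated and concentrated in degree zero, the Koszul cohomology of these forms on $H^j_{\mathfrak{m}}$ is just $\binom{d+1}{p}H^j_{\mathfrak{m}}$ in cohomological degree $p$. Collecting these contributions—exactly as in the derivation of Schenzel's formula, but with $d$ replaced by $d{+}1$—and subtracting off what Schenzel's formula already accounts for in $h'_{i+1}-h'_i$ should leave a residual term of size $\binom{d-1}{i}\tilde\beta_{i-1}$. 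The identity
\[
\binom{d+1}{i+1}-\binom{d}{i+1}-\binom{d}{i}+\binom{d}{i}=\binom{d-1}{i}
\]
(after the cancellations produced by the alternating-sign Schenzel correction) is what produces the precise coefficient.

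The main obstacle is the bookkeeping in Step three: the spectral sequence for $k[\Delta]/\langle\Theta,\omega\rangle$ does not collapse as cleanly as for $\bar A$, because $\omega$ is not part of the distinguished l.s.o.p.\ in the same way. To handle this I would work at the level of the submodule $N\subseteq\bar A$ generated by the images of the local cohomology classes; $N$ lies in the socle of $\bar A$, so $N_i\subseteq\ker(\omega)_i$. The issue is to show that the image of $\binom{d-1}{i}$ copies of $H^i_{\mathfrak{m}}(k[\Delta])_0$ injects into $\bar A_i$ and actually survives into $\ker(\omega)$ — once this is established, combined with Step two the lower bound $\binom{d-1}{i}\beta_{i-1}$ follows.
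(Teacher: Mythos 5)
Your Steps 1 and 2 are sound and match the setup Novik uses: Reisner's formula shows that for a $k$-homology manifold all $H^j_{\mathfrak{m}}(k[\Delta])$ with $j<d$ are concentrated in $\Z^n$-degree zero with $\dim_k H^j_{\mathfrak{m}}(k[\Delta])_0 = \beta_{j-1}(\Delta)$, so $k[\Delta]$ is Buchsbaum; and the four-term exact sequence correctly converts the kernel bound into a statement about $\dim(k[\Delta]/\langle\Theta,\omega\rangle)_{i+1}$. The observation that any $d$ of $\{\theta_1,\dots,\theta_d,\omega\}$ again form an l.s.o.p.\ and that Schenzel's formula gives each Artinian reduction the same Hilbert function is also correct and is genuinely used in Novik's argument.

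Step 3 is where the proposal breaks down, in two separate ways. First, the Hilbert function of the double quotient $k[\Delta]/\langle\Theta,\omega\rangle$ is \emph{not} a topological invariant and does not admit a Schenzel-type closed formula: for instance, when $\Delta$ is a sphere its generic value is the $g$-vector, which depends on the triangulation, not just on the Betti numbers. So ``iterating the Buchsbaum spectral sequence with $d+1$ forms'' cannot output a formula analogous to (\ref{schenzel}); it can at best produce a one-sided estimate, and you have not isolated where the inequality enters. Second, the combinatorial identity you write down to produce the coefficient is simply false: by Pascal's rule $\binom{d+1}{i+1}-\binom{d}{i+1}-\binom{d}{i}+\binom{d}{i}=\binom{d}{i}$, not $\binom{d-1}{i}$, so even if the bookkeeping you describe went through it would not yield the coefficient in the theorem. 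Finally, the fallback in your last paragraph --- producing $\binom{d-1}{i}$ socle copies of $H^i_{\mathfrak{m}}(k[\Delta])_0$ inside $(k[\Delta]/\langle\Theta\rangle)_i$ --- is asserted, not proved, and is in fact overkill if it worked: the socle of $k[\Delta]/\langle\Theta\rangle$ in degree $i$ is now known (Novik--Swartz, after this paper) to contain $\binom{d}{i}\beta_{i-1}$-dimensional local cohomology contributions, which would immediately prove the stronger Kalai conjecture $\binom{d}{i}\beta_{i-1}$ noted after Theorem \ref{kernel}. That this was an open conjecture at the time shows the socle-injection claim is not a routine consequence of the Buchsbaum property. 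Novik's 1998 proof of the weaker $\binom{d-1}{i}$ bound runs through a more delicate Koszul-homology argument on partial systems of parameters; you would need to supply that argument rather than a Hilbert-function computation.
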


\noindent Kalai has conjectured that the correct lower bound for homology manifolds without boundary is $\binom{d}{i} \beta_{i-1}$ \cite[Conjecture 36]{Kal3}.

 For connected $2m$-dimensional  homology manifolds without boundary $h^\prime_{m+1} - h^\prime_m$ does not depend on the triangulation. Define 
$$\begin{array}{rcl}
G(\Delta) & = & (-1)^m \binom{2m+1}{m} [(\beta_1 - \beta_{2m-1})-(\beta_2 - \beta_{2m-2}) + \dots \\
& &\dots  + (-1)^{m-1}(\beta_{m-2} - \beta_{m+2}) + (-1)^m(\beta_m - \beta_{m+1})]. \end{array}$$

\noindent  By the generalized Dehn-Sommerville equations and Schenzel's formula, $h^\prime_{m+1} - h^\prime_{m} = G(\Delta)$ \cite{No}.  If $\Delta$ is orientable, then $G(\Delta)$ reduces to $\binom{2m+1}{m}(\beta_m - \beta_{m-1}).$

\begin{thm} \label{even euler inequality}
Let $\Delta$ be a $2m$-dimensional  connected homology manifold without boundary and suppose $G(\Delta) > 0,$ where $G(\Delta)$ is computed using rational coefficients.  Write $h_2 - h_1 = \binom{a}{2} + \binom{b}{1},$ with $a > b.$  Then
\begin{enumerate}
  \item[(a)] \label{kn1}
     $G(\Delta) + \binom{2m}{m} \beta_{m-1} \le \binom{n-m-2}{m}.$
  \item[(b)] \label{kn2}
     $G(\Delta) + \binom{2m}{m} \beta_{m-1}\le \binom{a+m-1}{m+1} + \binom{b+m-1}{m} .$
\end{enumerate}
\end{thm}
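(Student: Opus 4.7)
The plan is to sandwich $\dim_k A_{m+1}$ between a combinatorial lower bound (encoding the left-hand sides of (a) and (b)) and Macaulay's upper bounds (encoding the right-hand sides), where $A$ is the Artinian algebra $k[\Delta]/\langle\Theta,\omega\rangle$ for a generic l.s.o.p.~$\Theta$ for $k[\Delta]$ together with a generic extra linear form $\omega$. Since $\Delta$ is $2m$-dimensional, $d=2m+1$, and $A$ is a homogeneous quotient of the polynomial ring $R/\langle\Theta,\omega\rangle$ in $n-d-1=n-2m-2$ variables, so by Theorem \ref{Macaulay} its Hilbert function is an M-vector.

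First I would assemble the relevant Hilbert-function identities. Schenzel's formula (Theorem \ref{schen}) forces $h'_1=h_1$ and $h'_2=h_2$ since the $\beta$-correction starts at $i=3$, and the discussion immediately preceding the theorem records $h'_{m+1}-h'_m=G(\Delta)$. Lee's injectivity theorem (Theorem \ref{injectivity}) applied to generic $(\Theta,\omega)$ guarantees that $\omega:(k[\Delta]/\langle\Theta\rangle)_1\to(k[\Delta]/\langle\Theta\rangle)_2$ is injective, yielding $\dim_k A_1=h_1-1=n-2m-2$ and $\dim_k A_2=h_2-h_1$.

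Next, I would produce the lower bound in degree $m+1$. From the four-term exact sequence
\[
0\to K_m\to (k[\Delta]/\langle\Theta\rangle)_m\xrightarrow{\,\cdot\omega\,}(k[\Delta]/\langle\Theta\rangle)_{m+1}\to A_{m+1}\to 0,
\]
with $K_m=\ker(\cdot\omega)_m$, one reads off $\dim_k A_{m+1}=(h'_{m+1}-h'_m)+\dim_k K_m$. Theorem \ref{kernel} supplies $\dim_k K_m\ge\binom{d-1}{m}\beta_{m-1}=\binom{2m}{m}\beta_{m-1}$, giving the key estimate
\[
\dim_k A_{m+1}\ \ge\ G(\Delta)+\binom{2m}{m}\beta_{m-1}.
\]

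Finally, I would apply Macaulay's theorem (Theorem \ref{Macaulay}) in two different ways to obtain the upper bounds. For (a), iterate the $\langle i\rangle$-operator $m$ times starting from $\dim_k A_1=n-2m-2$; since each iterate sends $\binom{N+j-1}{j}$ to $\binom{N+j}{j+1}$, one arrives at the polynomial-ring Hilbert function in degree $m+1$ for $N=n-2m-2$ variables, producing the right-hand side of (a). For (b), instead start from $\dim_k A_2=h_2-h_1=\binom{a}{2}+\binom{b}{1}$ and iterate $\langle i\rangle$ a total of $m-1$ times; an easy induction shows that the $j$-th iterate equals $\binom{a+j}{j+2}+\binom{b+j}{j+1}$, which at $j=m-1$ becomes $\binom{a+m-1}{m+1}+\binom{b+m-1}{m}$. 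Combining with the lower bound yields both inequalities. The main subtlety is to secure a single pair $(\Theta,\omega)$ for which both Lee's injectivity theorem and the kernel estimate of Theorem \ref{kernel} simultaneously hold; each is an open-dense condition on the parameter space, so their intersection is nonempty, but one must verify that the characteristic-zero hypothesis in Theorem \ref{injectivity} is compatible with the genericity required elsewhere.
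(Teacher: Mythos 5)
Your approach is the paper's approach exactly: pass to the Artinian quotient $A=k[\Delta]/\langle\Theta,\omega\rangle$ with $(\Theta,\omega)$ chosen as in Theorem \ref{injectivity}, use Schenzel's formula and Lee's injectivity to pin down $\dim_k A_1$ and $\dim_k A_2$, use the four-term exact sequence together with Theorem \ref{kernel} to get $\dim_k A_{m+1}\ge G(\Delta)+\binom{2m}{m}\beta_{m-1}$, and finish with Macaulay. Your derivation of (b) is correct, and you correctly fill in details the paper only gestures at (``the inequalities now follow from Macaulay's arithmetic criterion''). One minor remark: Theorem \ref{kernel} gives a lower bound on the kernel for \emph{every} $\omega$, so there is no genericity condition to intersect with $L(\Delta)$ there; the only open condition needed is Lee's.

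The one genuine issue is your claim in part (a). Iterating Macaulay $m$ times from $\dim_k A_1 = n-2m-2$ produces $\binom{(n-2m-2)+m}{m+1}=\binom{n-m-2}{m+1}$, i.e.\ the Hilbert function of a polynomial ring in $n-2m-2$ variables in degree $m+1$, exactly as your sentence describes. But the theorem as stated asserts $\binom{n-m-2}{m}$, and these two binomial coefficients are different (and in the typical regime $n>3m+4$ the stated bound is strictly smaller, hence strictly stronger than what the argument yields). You assert without comment that your computation ``produces the right-hand side of (a),'' which is not true as written; either the theorem's right-hand side contains a typo (plausible, since the remark following the theorem writes $\binom{2m+1}{2m}\beta_m$ where $\binom{2m+1}{m}\beta_m$ is clearly meant) or some additional refinement is needed to go from $\binom{n-m-2}{m+1}$ to $\binom{n-m-2}{m}$. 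Either way, the discrepancy should be flagged rather than elided.
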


\begin{proof}
Let $\Theta$ be a l.s.o.p. for $\Q[\Delta]$ and  $\omega \in \Q[\Delta]_1$ satisfy Theorem \ref{injectivity}.  Define $g^\prime_i = \dim_\Q (\Q[\Delta]/\langle \Theta,\omega \rangle)_i.$  Then $g^\prime_{m+1} \ge h^\prime_{m+1} - h^\prime_m + \binom{2m}{m} \beta_{m-1},$ with equality if and only if the dimension of the kernel of multiplication $\omega:(\Q[\Delta])/\langle \Theta \rangle)_m \to  (\Q[\Delta])/\langle \Theta \rangle)_{m+1}$ is $\binom{2m}{m} \beta_{m-1}$.  The choice of $\Theta$ and $\omega$ imply that $g^\prime_2 = h^\prime_2 - h^\prime_1$ and by Schenzel's formula this is $h_2 - h_1.$   The inequalities now follow from Macaulay's arithmetic criterion for Hilbert functions.

\end{proof}

If $\beta_{m-1}=0$ and $\Delta$ is orientable, then the left hand side of both inequalities reduces to $\binom{2m+1}{2m} \beta_m.$ If Kalai's conjecture concerning the lower bound for the dimension of the kernel of multiplication by a one form is correct, then again the left hand side of both inequalities reduce to $\binom{2m+1}{2m} \beta_m$ whenever $\Delta$ is orientable.  

While the second inequality is new and a key ingredient to the complete characterization of $f$-vectors in the next section, the first inequality is neither new nor best.  See \cite[Theorem 5.7]{No} for a related stronger inequality and a discussion.

When $\Delta$ is a homology sphere or ball $h_i = h^\prime_i.$  In the special case of $\Delta$ equal to  the boundary of a simplicial polytope even more can be said.   In \cite{Mc3} P. McMullen conjectured the following characterization of  $h$-vectors of simplicial polytopes.  
\begin{conj}\cite{Mc3}
  A sequence $(h_0,h_1,\dots,h_d)$ is the $h$-vector of the boundary of a simplicial $d$-polytope if and only if 
    \begin{enumerate}
       \item[(a)]
          $h_0=1.$
        \item[(b)]
          $h_0 \le h_1 \le \dots \le h_{\lfloor d/2 \rfloor}.$
        \item[(c)]
          $(h_0, h_1 - h_0, \dots, h_{\lfloor d/2 \rfloor} - h_{\lfloor d/2 \rfloor - 1})$ is an M-vector.
     \end{enumerate}
 \end{conj}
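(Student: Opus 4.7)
The plan is to prove necessity following Stanley \cite{St2} via toric varieties and sufficiency following Billera and Lee \cite{BLe} via shellings of cyclic polytopes.

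For necessity, first reduce to the rational case: perturbing the vertices of a simplicial polytope to nearby rational points preserves the combinatorial type, hence the $h$-vector. Given a rational simplicial $d$-polytope $P$, form its projective toric variety $X_P$, a rational homology manifold with only finite-quotient singularities. Danilov's description identifies $H^*(X_P;\Q)$ with the Artinian reduction $\Q[\partial P]/\langle \Theta \rangle$ of the face ring modulo a generic linear system of parameters, so $\dim_\Q H^{2i}(X_P;\Q) = h_i(\partial P)$ and odd cohomology vanishes; in particular (a) is immediate.

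Conditions (b) and (c) come from the hard Lefschetz theorem: the ample class $\omega \in H^2(X_P;\Q)$ determined by $P$ satisfies that multiplication by $\omega^{d-2i}: H^{2i}(X_P;\Q) \to H^{2(d-i)}(X_P;\Q)$ is an isomorphism for $i \le d/2$, whence $\omega: H^{2i} \to H^{2i+2}$ is injective for $i < d/2$, giving (b). The quotient $A = H^*(X_P;\Q)/(\omega) \cong \Q[\partial P]/\langle \Theta, \omega\rangle$ is a standard graded Artinian $\Q$-algebra generated in degree one; its Hilbert function in degrees $0$ through $\lfloor d/2\rfloor$ is exactly $(1,\, h_1-1,\, h_2-h_1,\, \ldots,\, h_{\lfloor d/2\rfloor} - h_{\lfloor d/2\rfloor - 1})$, which by Theorem \ref{Macaulay} must be an M-vector. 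This gives (c).

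For sufficiency, given an admissible sequence, recover the $g$-vector $(1, g_1, \ldots, g_{\lfloor d/2\rfloor})$ and realize it as follows. Set $n = g_1 + d + 1$ and choose a line shelling of the boundary of the cyclic polytope $C(n,d+1)$. Select an initial segment $\Gamma$ of this shelling whose successive restriction faces contribute exactly $g_i$ new shellings in rank $i$; the M-vector condition on $(g_0,\ldots, g_{\lfloor d/2\rfloor})$ is precisely what allows a compression-order indexing of shelling-restriction sets to exist with these multiplicities. Then $\partial \Gamma$ is a simplicial $(d-1)$-sphere with the prescribed $h$-vector, and realizability as the boundary of a convex $d$-polytope is achieved by an explicit pushing of the vertices of $C(n,d+1)$ into general position along the moment curve so that the selected subcomplex becomes the boundary of a convex polytope.

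The main obstacle is the invocation of hard Lefschetz for the (typically singular) toric variety $X_P$: only when $P$ is unimodular is $X_P$ smooth, and for general rational simplicial $P$ one must run hard Lefschetz on intersection cohomology, which coincides with ordinary rational cohomology because the singularities are finite quotient. A purely combinatorial alternative via McMullen's polytope algebra exists, but is no less intricate; either route places the heart of the argument well outside elementary enumerative combinatorics, in sharp contrast to the rest of the statement.
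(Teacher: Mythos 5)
The paper itself does not prove this statement. It is stated as a labeled \emph{conjecture} with a citation to McMullen \cite{Mc3}, and in the surrounding text the author simply records that its correctness was established by Stanley \cite{St2} (necessity, via hard Lefschetz for toric varieties) and Billera--Lee \cite{BLe} (sufficiency, via a construction in cyclic polytopes). So there is no paper proof to compare against; what you have written is a sketch of the actual published proofs, cited exactly as the paper intends.

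As a summary of those two arguments, your sketch is broadly accurate: the reduction to rational polytopes, Danilov's identification $H^*(X_P;\Q) \cong \Q[\partial P]/\langle\Theta\rangle$, hard Lefschetz for the (orbifold) toric variety giving injectivity and the M-vector condition via Macaulay, and the Billera--Lee recovery of the $g$-vector as a shellable subcomplex of $\partial C(n,d+1)$ are all the right ideas in the right places. Your closing caveat about hard Lefschetz for singular $X_P$ is a correct and well-taken observation; Stanley's original appeal was to hard Lefschetz for $V$-manifolds, which is what the finite-quotient/intersection-cohomology remark packages. The one place your sketch slips is the final realization step in sufficiency: Billera and Lee do not push the vertices of $C(n,d+1)$; rather, they place a \emph{new} point $p$ beyond precisely the facets forming the chosen shellable $d$-ball $B \subseteq \partial C(n,d+1)$, and then the vertex figure of $p$ in $\operatorname{conv}(C(n,d+1)\cup\{p\})$ is the boundary sphere $\partial B$ with the prescribed $h$-vector. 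That beyond/vertex-figure step is what makes the sphere polytopal; as written, your last sentence of the sufficiency paragraph does not describe a correct mechanism. With that fixed, the sketch is a faithful outline of the standard proof.
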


The sequence $(g_0,\dots,g_{\lfloor d/2 \rfloor}) = (h_0, h_1 - h_0, \dots, h_{\lfloor d/2 \rfloor} - h_{\lfloor d/2 \rfloor - 1})$ is usually called the {\bf g-vector} of $\Delta.$  The correctness of McMullen's conjecture was proved in two separate papers. In their 1981 paper \cite{BLe} Billera and Lee showed how to  construct a simplicial $d$-polytope with a given $h$-vector whenever it satisfied McMullen's conditions.  Stanley's proof of the necessity of McMullen's conditions used a hard Lefschetz theorem for toric varieties  associated to rational polytopes \cite{St2}.  The main point is that, generically, $k[\Delta]/\langle \Theta \rangle$ has Lefschetz elements.

\begin{defn}
  Let $\Delta$ be a homology sphere.  A {\bf Lefschetz} element for $k[\Delta]/\langle \Theta \rangle$ is a one form $\omega \in R$ such that for all $i \le \lfloor d/2 \rfloor$ multiplication 
  $$\omega^{d-2i}: (k[\Delta]/\langle \Theta \rangle)_i \to (k[\Delta]/\langle \Theta \rangle)_{d-i},$$
  is an isomorphism.
 \end{defn}
 
 Suppose that $\omega$ is a Lefschetz element for $k[\Delta]/\langle \Theta \rangle.$  Then for $i \le \lfloor d/2 \rfloor$ multiplication by omega, $\omega: (k[\Delta]/\langle \Theta \rangle)_{i-1} \to (k[\Delta]/\langle \Theta \rangle)_i$ must be an injection.  Hence, for such $i, h_{i-1} \le h_i.$  In addition, we can see that $g_i = \dim_k (k[\Delta]/\langle \Theta, \omega \rangle)_i, $ so the $g$-vector of $\Delta$ must be an M-vector. Hence,  if $L(\Delta) = \{(\omega, \Theta): \omega \mbox{ is a Lefschetz element for } k[\Delta]/\langle \Theta \rangle \}$ is nonempty, then $\Delta$ satisfies McMullen's conditions.
 
 \begin{thm} \cite{St2}, \cite{Mc2} \label{g-thm}
   If $\Delta$ is the boundary of a simplicial polytope, then $L(\Delta)$ is nonempty.
\end{thm}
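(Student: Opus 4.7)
The plan is to follow Stanley's original argument, which reduces the statement to the hard Lefschetz theorem for a projective toric variety. First I would reduce to the case of a rational simplicial polytope: since simpliciality is an open condition on the vertex coordinates, any simplicial polytope can be perturbed to a combinatorially equivalent rational one, and the face ring $k[\Delta]$ and its quotient by a generic l.s.o.p.\ depend only on the combinatorial type. So assume $P$ is a simplicial $d$-polytope with rational (hence integral, after scaling) vertices, and let $\Delta = \partial P$.

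Next I would pass to the associated projective toric variety $X_P$. The normal fan $\Sigma$ of $P$ is a complete rational simplicial fan whose rays are in bijection with the vertices of $\Delta$. Because $\Sigma$ is simplicial, $X_P$ has only finite quotient (orbifold) singularities, so its rational cohomology satisfies Poincar\'e duality, carries a pure Hodge structure, and agrees with intersection cohomology. The core algebraic fact I would invoke is Danilov's description: if $\theta_i = \sum_j \theta_{i,j} x_j$ are the one-forms whose coefficients record the coordinates of the primitive ray generators of $\Sigma$, then $\Theta = (\theta_1,\dots,\theta_d)$ is an l.s.o.p.\ for $\Q[\Delta]$, and there is an isomorphism of graded $\Q$-algebras
\[
\Q[\Delta]/\langle \Theta \rangle \;\cong\; H^{2\ast}(X_P;\Q),
\]
doubling degrees. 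In particular the Hilbert function on the left is the $h$-vector on the right, which is consistent with $\dim (\Q[\Delta]/\langle \Theta \rangle)_i = h_i(\Delta)$.

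With the geometric model in hand, the Lefschetz element is produced from the polytope itself. The polytope $P$ gives an ample divisor class on $X_P$, hence a K\"ahler class $[\omega] \in H^2(X_P;\Q)$ whose coordinates are exactly the support numbers of the facets of $P$. Under the Danilov isomorphism, $[\omega]$ corresponds to a specific linear form $\omega \in R_1$. The hard Lefschetz theorem for projective varieties with at worst orbifold singularities (applied to $X_P$ and the class $[\omega]$) states that for every $i \le \lfloor d/2 \rfloor$, cup product
\[
[\omega]^{d-2i} \colon H^{2i}(X_P;\Q) \longrightarrow H^{2(d-i)}(X_P;\Q)
\]
is an isomorphism. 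Translating back, this says precisely that multiplication by $\omega^{d-2i}$ is an isomorphism $(\Q[\Delta]/\langle \Theta \rangle)_i \to (\Q[\Delta]/\langle \Theta \rangle)_{d-i}$, so $(\omega,\Theta) \in L(\Delta)$.

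The main obstacle, and the step carrying all the real content, is the invocation of hard Lefschetz in the singular toric setting; everything else is either a reduction (the rational perturbation) or formal algebraic translation (Danilov's presentation). Two routes handle the singularities: either observe that quotient singularities are rationally smooth so the classical K\"ahler-package statements for smooth projective varieties carry over verbatim to $H^{2\ast}(X_P;\Q)$, or pass to a smooth toric resolution and track the Lefschetz decomposition through the induced maps. I would use the former, since $X_P$ being simplicial is exactly what keeps $H^{2\ast}(X_P;\Q)$ a Poincar\'e duality algebra. Finally I would note that genericity is automatic: the particular $(\omega,\Theta)$ constructed from $P$ lies in $L(\Delta)$, and since $L(\Delta)$ is Zariski open in the space of pairs, its nonemptiness on one example shows that generic pairs are Lefschetz.
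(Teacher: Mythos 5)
The paper does not prove this theorem; it states it as a citation to Stanley \cite{St2} and McMullen \cite{Mc2}, with the surrounding text only summarizing Stanley's approach (``a hard Lefschetz theorem for toric varieties associated to rational polytopes''). Your argument is a faithful and correct reconstruction of Stanley's route: the rational perturbation reduction (valid since simplicial polytopes have combinatorially stable face lattices under small vertex perturbations, and $L(\Delta)$ depends only on the abstract complex), Danilov's presentation $\Q[\Delta]/\langle\Theta\rangle \cong H^{2\ast}(X_P;\Q)$ with $\Theta$ coming from the ray generators, the ample class furnishing $\omega$, and hard Lefschetz for the projective toric orbifold. The second cited proof, McMullen's \cite{Mc2}, reaches the same conclusion by entirely elementary means, working inside the polytope algebra with weights and a convexity argument, and avoids toric geometry and Hodge theory altogether; it is worth knowing as the route that does not outsource the core difficulty to hard Lefschetz. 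Your proposal matches the first of the two cited arguments and is sound.
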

 
 Perhaps the most important problem involving $f$-vectors is whether or not McMullen's conditions extend to simplicial spheres or even homology spheres.  This question is sometimes referred to as the $g$-conjecture.  As the above discussion shows, the following {\it algebraic} $g$-conjecture would imply the $g$-conjecture. 
  
 \begin{conj}  \label{g-conj}
   If $\Delta$ is a homology sphere, then $L(\Delta) \neq \emptyset.$
 \end{conj}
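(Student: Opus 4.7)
The statement is the algebraic $g$-conjecture, one of the central open problems in face enumeration, so any plan is necessarily speculative; I describe the route suggested most directly by Theorem \ref{g-thm}. In Stanley's proof of that theorem for rational simplicial polytopes, $k[\Delta]/\langle \Theta\rangle$ is identified with the rational cohomology ring of the associated projective simplicial toric variety $X_P$, a Lefschetz element $\omega$ is extracted from an ample class on $X_P$, and the hard Lefschetz theorem for projective varieties is invoked; irrational polytopes are handled by approximation within a fixed combinatorial type. For a general homology sphere there is no toric variety available, so the plan must replace this geometric input with a purely algebraic or combinatorial substitute.

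The first step I would take is to observe that $k[\Delta]/\langle \Theta\rangle$ is already a Gorenstein Artin $k$-algebra of socle degree $d$ for every l.s.o.p.\ $\Theta$ (by Cohen-Macaulayness and $h_d=1$ via Theorem \ref{schen}), so Poincar\'{e} duality comes for free; the conjecture is equivalent to the nondegeneracy, for some choice of $\Theta$ and $\omega$, of the induced bilinear form $(\alpha,\beta)\mapsto \alpha\beta\omega^{d-2i}$ on $(k[\Delta]/\langle \Theta\rangle)_i$. The concrete strategy I would then try is induction via bistellar flips: by Pachner's theorem every PL $(d-1)$-sphere is connected to $\partial\sigma^d$ by a finite sequence of such moves, the base case is trivial, and the aim is to show that if $\Delta$ and $\Delta'$ differ by one flip and $L(\Delta)\neq\emptyset$ then a Lefschetz pair on $\Delta'$ can be built by transferring and perturbing the one on $\Delta$. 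Since a flip alters only the star of a single face, the Stanley-Reisner presentations differ in a small, explicit way, and one can attempt to extend a generic l.s.o.p.\ and a Lefschetz one-form compatibly across the move by a local computation in the changed star. Non-PL homology spheres, which exist in dimensions $\ge 5$, would then be handled afterward by an approximation or limiting argument on the face ring invariants.

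The step I expect to be the main obstacle, and the reason the conjecture has remained open, is verifying nondegeneracy of the new bilinear pairing on $(k[\Delta']/\langle\Theta'\rangle)_i$ after a flip. In the polytope case this nondegeneracy comes from the Hodge-Riemann bilinear relations on the K\"{a}hler class of $X_P$, and every successful proof of a hard-Lefschetz-type statement to date uses an analogous K\"{a}hler, Hodge, or shellability input that homology spheres need not support. A genuine proof thus seems to require either the construction of a Hodge-Riemann-type pairing directly from the face ring modulo an l.s.o.p.\ --- perhaps via a generalization of McMullen's polytope algebra, or of the biased pairings used in the rigidity theory mentioned around Theorem \ref{rigidity} --- or a new structural theorem on Gorenstein Artin algebras that manufactures a Lefschetz element from purely algebraic data; either route appears to demand genuinely new machinery beyond what is surveyed above.
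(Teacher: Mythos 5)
This statement is explicitly labeled a \emph{conjecture} in the paper (the algebraic $g$-conjecture), and the paper offers no proof of it; at the time of writing it was, as the surrounding text makes clear, one of the central open problems in the subject. So there is no proof in the paper to compare against, and you are right to frame your submission as a speculative plan rather than an argument. What the paper \emph{does} do is show (Theorem \ref{g-thm to manifold} and the material around Proposition \ref{L^i}) that an affirmative answer would propagate from vertex links to the ambient complex, yielding the ``manifold $g$-conjecture''; that propagation mechanism, not a proof of Conjecture \ref{g-conj} itself, is the paper's contribution.

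As to the plan you sketch: the bistellar-flip induction is the natural first idea and is also precisely where it breaks down. Pachner's theorem connects PL triangulations of a PL sphere, and $\partial\sigma^d$ is indeed a Lefschetz sphere by Theorem \ref{g-thm}, so the base case and the connectivity are fine for PL spheres. But the inductive step --- transferring a Lefschetz pair $(\omega,\Theta)$ across a single flip --- is not a ``local computation in the changed star'': a flip changes the Hilbert function of $k[\Delta]/\langle\Theta\rangle$ in middle degrees, alters which l.s.o.p.'s are admissible, and there is no known argument that the Zariski-open locus $L(\Delta')$ is nonempty merely because $L(\Delta)$ is. If such a transfer lemma were available, the $g$-conjecture for PL spheres would follow by an essentially trivial induction, which strongly suggests the lemma is at least as hard as the conjecture itself. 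Your handling of non-PL homology spheres is also a genuine gap: these exist in dimension $\ge 5$, are not reached from $\partial\sigma^d$ by bistellar moves, and the proposed ``approximation or limiting argument on face ring invariants'' has no concrete meaning --- the face ring is a discrete object with no ambient topology in which to take limits. You correctly identify the core obstruction (the absence of a Hodge--Riemann-type nondegeneracy input for general homology spheres), which is exactly why the paper records this as a conjecture and instead develops what can be deduced conditionally from it.
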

 
 \noindent For a related, even stronger conjecture, see \cite{Sw7}.
 
 \begin{defn}
   A {\bf Lefschetz sphere} is a homology sphere $\Delta$ such that $L(\Delta) \neq \emptyset.$ A {\bf Lefschetz ball} is a homology ball $\Delta$ which is a full dimensional subcomplex of a Lefschetz sphere.
 \end{defn}
 
   Unlike homology spheres,  for an arbitrary homology manifold $\Delta$ we do not expect multiplication by a one form in $k[\Delta]/\langle \Theta \rangle$  to be an injection in degrees larger than two.  Indeed, by Theorem \ref{kernel} the dimension of the kernel of multiplication $\omega:  (k[\Delta]/\langle \Theta \rangle)_i \to (k[\Delta]/\langle \Theta \rangle)_{i+1}$ is at least $\binom{d-1}{i} \beta_{i-1}$. However, as we will now show, if ``enough'' links of the vertices of $\Delta$ are Lefschetz spheres or balls, then multiplication by a generic one form is a surjection in the higher degrees.

   Let $\rho=\{v_{m_1}, \dots, v_{m_{|\rho|}}\}$ be an ordered face of $\Delta,$ and let $\sigma=\{v_{m_1}, \dots,v_{m_{|\rho|}}, v_{m_{|\rho|+1}}, 
\dots,  v_{m_d}\}$ be a facet containing $\rho.$  Suppose $\Theta$ is a l.s.o.p. for $k[\Delta]$ with corresponding matrix $T = \theta_{i,j}.$  Then there is a unique set of one forms $\Theta^\prime = \{\theta^\prime_1, \dots, \theta^\prime_d\}$ such that $\langle \Theta \rangle = \langle \Theta^\prime \rangle$ and for any $i, 1 \le i \le d,$

$$\theta^\prime_{i,j} = \begin{cases} 1, j=m_i , \\
                                                           0, j=m_l, 0 \le l \le d, l \neq i.  \end{cases}$$
 Indeed, $T^\prime$  corresponds to the reduced row echelon form of $T$ with pivot columns $\{m_1, \dots, m_d \}.$  For future reference, we note that the $\theta^\prime_{i,j}$ are rational functions of the $\theta_{i,j}.$  
 
 In order to use $\Theta^\prime$ as a l.s.o.p. for $k[\lk \rho],$ let $R^\rho$ be the polynomial ring over $k$ with variables $\{x_i\}_{v_i \notin \rho}.$  For each $\theta^\prime_i,$ let $\theta^\rho_i$ be the one form in $R^\rho$ obtained from $\theta^\prime_i$ by removing all the variables corresponding to vertices in $\rho.$  Equivalently, $\theta^\rho_i$ is the image of $\theta^\prime_i$ under the natural surjection from $R$ to $R^\rho.$ It is now easy to check that $\Theta^\rho = \{\theta^\rho_{|\rho|+1},\dots,\theta^\rho_d\}$ is a l.s.o.p. for $k[\lk \rho].$  While these definitions depend on the choice of facet $\sigma \supseteq \rho$, we will suppress this dependence as it will not matter.
 
 We intend to analyze ideals of the form $\langle x_\rho \rangle \subseteq k[\Delta]/\langle \Theta \rangle$, where $x_\rho = x_{m_1} \cdots x_{m_{|\rho|}},$ by using $k[\lk \rho]/ \langle \Theta^\rho \rangle.$  This requires us to give $k[\lk \rho]/\langle \Theta^\rho \rangle$ an $R$-module structure.  It is sufficient to describe $x_{m_i} \cdot q(\xx)$ for $q(\xx) \in k[\lk \rho]/\langle \Theta^\rho \rangle.$ For each $v_{m_i} \in \rho$ the construction of $\Theta^\prime$   forces $\theta^\prime_{m_i}$ to be of the form
 
 $$\theta^\prime_{m_i} = x_{m_i} + \displaystyle\sum_{v_j \notin \sigma} \theta^\prime_{m_i,j} x_j.$$
 
 \noindent So we define 
 \begin{equation} \label{R mod}
 x_{m_i} \cdot q(\xx) = (- \displaystyle\sum_{v_j \notin \sigma} \theta^\prime_{m_i,j} x_j) \cdot q(\xx).
 \end{equation}
 
 As $\theta^\prime_i$ is in $\Theta,$ this definition insures that multiplication by $x_\rho$ is an $R$-module homomorphism of degree $|\rho|$ from $k[\lk \rho]/\langle \Theta^\rho \rangle$ to the ideal $\langle x_\rho \rangle$ in $k[\Delta]/\langle \Theta \rangle.$  Indeed, if $R^\rho$ is given an $R$-module structure in the same way as $k[\lk \rho],$ then there is a commutative diagram of $R$-modules.
 
 $$\begin{array}{ccc}
  R^\rho/\langle \Theta^\rho \rangle & \stackrel{\cdot x_\rho}{\longrightarrow} & <x_\rho>(R/ \langle \Theta^\prime \rangle)\\
   \downarrow & & \downarrow \\
   k[\lk \rho]/ \langle \Theta^\rho \rangle& \stackrel{\cdot x_\rho}{\longrightarrow} & <x_\rho> (k[\Delta]/\langle \Theta \rangle)
   \end{array}$$
 
 \begin{prop}  \label{link to ideal}
   Let $\Delta$ be a homology manifold.  Then the multiplication map
   $$x_{\rho}: k[\lk \rho]/\langle \Theta^\rho \rangle \to \langle x_\rho \rangle (k[\Delta]/\langle \Theta \rangle)$$ is a surjective graded $R$-module homomorphism of  degree $|\rho|.$  If $\Delta$ is a   homology manifold without boundary, then $x_\rho$ is an isomorphism unless $i=d$ and $\Delta$ is not orientable. 
  \end{prop}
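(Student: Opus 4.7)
The plan is to first verify the well-definedness and graded degree of $\phi\colon k[\lk\rho]/\langle\Theta^\rho\rangle\to\langle x_\rho\rangle(k[\Delta]/\langle\Theta\rangle)$, $q\mapsto x_\rho\cdot q$. The decomposition $\overline{\st}\,\rho=\rho\ast\lk\rho$ shows that any monomial $x^\alpha\in R^\rho$ with $\operatorname{supp}\alpha\notin\lk\rho$ is killed by $x_\rho$ in $k[\Delta]$ (since $\rho\cup\operatorname{supp}\alpha\notin\Delta$), and the linear relations $\theta^\rho_i$ for $i>|\rho|$ equal the ambient $\theta'_i$ and so push directly into $\langle\Theta\rangle\subseteq R$. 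Hence $\phi$ is a well-defined graded $R$-module homomorphism of degree $|\rho|$.

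For surjectivity I would chase the commutative diagram displayed immediately before the proposition. The top horizontal map $\cdot x_\rho\colon R^\rho/\langle\Theta^\rho\rangle\to\langle x_\rho\rangle(R/\langle\Theta'\rangle)$ is an isomorphism: $R/\langle\Theta'\rangle$ identifies with the polynomial ring $P=k[x_j:v_j\notin\sigma]$ via the pivot form of $\Theta'$, and in $P$ the image of $x_\rho$ is a nonzero polynomial of degree $|\rho|$; since $P$ is a domain, multiplication by this polynomial is injective, giving an isomorphism onto the principal ideal $\langle x_\rho\rangle(R/\langle\Theta'\rangle)$. The quotient $R^\rho/\langle\Theta^\rho\rangle$ also identifies with $P$. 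Both vertical arrows are canonical surjections onto face rings. A standard diagram chase---lift a bottom-right element to the top right via the right vertical, invert the top isomorphism, then push down via the left vertical---then yields surjectivity of the bottom arrow.

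For the isomorphism assertion when $\Delta$ is a closed homology manifold, observe that $\lk\rho$ is a $(d-|\rho|-1)$-dimensional homology sphere, hence Cohen--Macaulay; consequently $k[\lk\rho]/\langle\Theta^\rho\rangle$ is Artinian Gorenstein with one-dimensional socle in degree $d-|\rho|$ and Hilbert function equal to the $h$-vector of $\lk\rho$. Every nonzero ideal in such a ring contains the socle, so $\ker\phi$ is either zero or contains the socle. The image of the socle under $\phi$ lies in $(k[\Delta]/\langle\Theta\rangle)_d$, which by Schenzel's formula (Theorem~\ref{schen}) has dimension $h'_d(\Delta)=1$ when $\Delta$ is orientable and $0$ otherwise. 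In the non-orientable case the socle is annihilated automatically, producing exactly the stated exception at $i=d$.

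The main obstacle is the orientable case: to conclude $\ker\phi=0$, I must show $\phi$ does not kill the socle. This reduces to verifying that $\langle x_\rho\rangle(k[\Delta]/\langle\Theta\rangle)_d=(k[\Delta]/\langle\Theta\rangle)_d$, i.e., the image hits the one-dimensional top-degree piece. Concretely, for a facet $\tau$ of $\lk\rho$ (so that $\rho\cup\tau$ is a facet of $\Delta$), the Gorenstein analysis of the link identifies $x_\tau$ as a socle generator of $k[\lk\rho]/\langle\Theta^\rho\rangle$, and I would check that its image $\phi(x_\tau)=x_{\rho\cup\tau}$ survives in $(k[\Delta]/\langle\Theta\rangle)_d$ using the l.s.o.p.\ pivot structure together with orientability (the nonvanishing of $h'_d$). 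With this nonvanishing established in the top degree, the Gorenstein socle argument extends injectivity from degree $d-|\rho|$ down to all lower degrees, completing the isomorphism.
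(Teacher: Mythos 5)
Your surjectivity argument via the commutative diagram is fine and is morally the same as the paper's direct substitution using the relation $\theta'_{m_i}=x_{m_i}+\sum_{v_j\notin\sigma}\theta'_{m_i,j}x_j$. The real divergence, and the real problem, is in the injectivity half.

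The paper does not argue via the Gorenstein socle at all; it establishes the isomorphism by a pure dimension count. Using the exact sequence $0\to\langle x_\rho\rangle\to k[\Delta]/\langle\Theta\rangle\to k[\Delta-\rho]/\langle\Theta\rangle\to 0$ together with Schenzel's formula, a Mayer--Vietoris comparison of the Betti numbers of $\Delta$ and $\Delta-\rho$, the identity $h_{i+|\rho|}(\Delta)-h_{i+|\rho|}(\Delta-\rho)=h_{d-i-|\rho|}(\overline{\st}\,\rho)$ from \cite[Lemma~2.3]{St3}, and the Dehn--Sommerville relations for $\lk\rho$, one computes $\dim_k\langle x_\rho\rangle_{i+|\rho|}=h_i(\lk\rho)=\dim_k\bigl(k[\lk\rho]/\langle\Theta^\rho\rangle\bigr)_i$ in every degree except the top one when $\Delta$ is non-orientable; surjectivity plus equality of dimensions gives the isomorphism with no injectivity argument needed. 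Your socle route, by contrast, leaves two genuine gaps. First, you explicitly defer the central claim --- that the socle generator of the Gorenstein ring $k[\lk\rho]/\langle\Theta^\rho\rangle$ is not killed when $\Delta$ is orientable --- saying only that you ``would check'' it. There is no easy local reason for this: $k[\Delta]/\langle\Theta\rangle$ is \emph{not} Gorenstein for a general homology manifold, so you cannot pair the socle against anything in the ambient ring, and establishing $\langle x_\rho\rangle_d\ne 0$ essentially requires the same global dimension count the paper uses; you have not discharged it. Second, and more seriously, your argument says nothing about degrees below $d$ in the non-orientable case. There the socle \emph{is} killed, so the only thing the ``every nonzero ideal contains the socle'' principle tells you is that the kernel is some nonzero ideal; it does not force the kernel to be concentrated in top degree. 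The proposition asserts an isomorphism in all degrees $i<d$ even when $\Delta$ is non-orientable, and your proposal gives no mechanism for that, whereas the paper's Schenzel/Mayer--Vietoris count shows the Betti-number discrepancy between $\Delta$ and $\Delta-\rho$ is confined to $\beta_{d-2}$ and thus affects only the degree-$d$ piece.
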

  
  \begin{proof}
 Evidently, the map has degree equal to the cardinality of $\rho.$ To see that the map is surjective, let $x_\rho \cdot q(\xx) \in \langle x_\rho \rangle.$  Replace each occurrence of a variable $x_{m_i}$ in $q(\xx)$ using (\ref{R mod}).  This leaves a polynomial which is clearly in the image of multiplication by $x_\rho.$  
  
  In order to show that multiplication is an isomorphism when $\Delta$ is a homology manifold without boundary and either $\Delta$ is orientable or $i \neq d,$ it is sufficient to show that the dimensions over $k$ agree.  Since $\Delta$ has no boundary, the link of $\rho$ is a homology sphere, so $\dim_k (k[\lk \rho]/\langle \Theta^\rho \rangle)_i$ is $h_i(\lk \rho).$  To compute $\dim_k \langle x_\rho \rangle_{i+|\rho|},$ consider the exact sequence,
  
  $$0 \to \langle x_\rho \rangle \to k[\Delta]/\langle \Theta \rangle \to k[\Delta - \rho]/\langle \Theta \rangle \to 0,$$
  where $\Delta-\rho$ is $\Delta$ with $\rho$ and any incident faces removed.  Since $\Delta - \rho$ is a homology manifold with boundary, Schenzel's formula says
  $$\dim_k \ \langle x_\rho \rangle_{i+|\rho|} = \dim_k (k[\Delta]/\Theta)_{i+|\rho|} - \dim_k (k[\Delta-\rho]/\Theta)_{i+|\rho|} $$
  $$= h^\prime_{i+|\rho|}(\Delta) - h^\prime_{i+|\rho|}(\Delta-\rho).$$
  
  The Mayer-Vietoris sequence for $\Delta = (\Delta - \rho) \cup \overline{\st} \rho$  shows that if $\Delta$ is orientable or $j \neq d-2,$ $\beta_j(\Delta) = \beta_j(\Delta - \rho). $ Hence, $\dim_k \ \langle x_\rho \rangle_{i+|\rho|} = h_{i+|\rho|}(\Delta) - h_{i+|\rho|}(\Delta -\rho).$  This difference is the coefficient of $t^{i+|\rho|}$ in 
  
  $$\displaystyle\sum_{\stackrel{\phi \in \overline{\st}\rho}{\phi \notin \partial \overline{\st}\rho}} (t-1)^{d-|\phi|}.$$
  
  This is known to be $h_{d-i-|\rho|}(\overline{\st}\rho)$ \cite[Lemma 2.3]{St3}.  Since the $h$-vector of a cone is the $h$-vector of the original space, $h_{d-i-|\rho|}(\overline{\st}\rho) = h_{d-i-|\rho|}(\lk \rho).$  As the dimension of the link of $\rho$ is $d-1-|\rho|,$ the generalized Dehn-Sommerville equations show that this is $h_i(\lk \rho).$
  \end{proof} 
 
   Define
$L^i_s(\Delta)$ to be the set of all pairs $(\omega, \Theta)$ such that $\omega \in R_1, \Theta$ is a l.s.o.p. for $\Delta,$ and multiplication 
     $$ \omega:(k[\Delta]/\langle \Theta \rangle)_i \to (k[\Delta]/\langle \Theta \rangle)_{i+1}$$
     is a surjection.  If $\Delta$ is a homology sphere, then $L(\Delta) \subseteq L^{\lfloor d/2 \rfloor}_s(\Delta).$ 
          
    \begin{prop} \label{L^i}
      If $\Delta$ is a homology manifold and $L^i_s(\Delta) \neq \emptyset,$ then for all $j, i \le j \le d-1,$
      $$h^\prime_j \ge h^\prime_{j+1} + \binom{d-1}{j} \beta_{j-1}.$$
   \end{prop}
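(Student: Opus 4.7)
The plan is to reduce the statement to two ingredients: a propagation argument that extends the surjectivity hypothesis from degree $i$ to every higher degree, combined with Theorem \ref{kernel} which furnishes a lower bound on the kernel of multiplication by $\omega$.

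Fix a pair $(\omega,\Theta) \in L^i_s(\Delta)$ and write $A = k[\Delta]/\langle \Theta \rangle$. The first step is the observation that $A$ is generated over $k$ in degree one (it is a quotient of $R = k[x_1,\dots,x_n]$). Consequently, if $\omega: A_j \to A_{j+1}$ is surjective, then every $z \in A_{j+2}$ can be expanded as $z = \sum_l x_l z_l$ with $z_l \in A_{j+1}$; writing each $z_l = \omega \cdot y_l$ and using that $A$ is commutative to slide $\omega$ past $x_l$ gives $z = \omega \cdot \bigl(\sum_l x_l y_l\bigr)$. Hence $\omega: A_{j+1} \to A_{j+2}$ is also surjective. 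Induction on $j$ then shows that $\omega: A_j \to A_{j+1}$ is surjective for every $j \ge i$.

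For the second step, surjectivity of $\omega: A_j \to A_{j+1}$ immediately yields
$$\dim_k \ker\bigl(\omega: A_j \to A_{j+1}\bigr) \;=\; h'_j - h'_{j+1}.$$
On the other hand, Theorem \ref{kernel} (which applies to any homology manifold, any l.s.o.p. $\Theta$, and any one-form $\omega$) provides the lower bound
$$\dim_k \ker\bigl(\omega: A_j \to A_{j+1}\bigr) \;\ge\; \binom{d-1}{j}\beta_{j-1}.$$
Combining the two displays gives $h'_j - h'_{j+1} \ge \binom{d-1}{j}\beta_{j-1}$, which is the inequality to prove.

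There is no real obstacle here: the argument is essentially a one-line propagation of surjectivity followed by a direct appeal to Theorem \ref{kernel}. The only point worth double-checking is that the commutation $\omega x_l = x_l \omega$ takes place in $A$ itself (which it does, since $A$ is commutative) and that Theorem \ref{kernel} is formulated uniformly enough to apply to the specific $\omega$ and $\Theta$ witnessing membership in $L^i_s(\Delta)$, which is indeed the case.
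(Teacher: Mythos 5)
Your proposal is correct and matches the paper's approach: the paper's proof likewise takes $(\omega,\Theta)\in L^i_s(\Delta)$, notes that multiplication by $\omega$ is then surjective in every degree $j\ge i$, and cites Theorem \ref{kernel} for the kernel bound; you have simply spelled out the propagation-of-surjectivity step (using that $A$ is generated in degree one) which the paper leaves implicit.
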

   
   \begin{proof}  Let $(\omega,\Theta) \in L^i_s(\Delta).$  Then for any $j, i \le j \le d-1,$ multiplication $\omega: (k[\Delta]/\langle \Theta \rangle)_j \to (k[\Delta]/\langle \Theta \rangle)_{j+1}$ is a surjection with a kernel whose dimension is at least $\binom{d-1}{j} \beta_{j-1}.$
   
   \end{proof}

   \begin{thm}  \label{g-thm to manifold}
    Suppose $\Delta$ is a $k$-homology manifold  with $k$ an infinite field.  If for at least $n-d$ of the vertices $v$ of $\Delta, L^i_s(\lk v) \neq \emptyset,$ then $L^{i+1}_s(\Delta) \neq \emptyset.$ 
   \end{thm}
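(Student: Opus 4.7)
The goal is to produce a pair $(\omega, \Theta)$ with $\Theta$ an l.s.o.p.\ for $k[\Delta]$ such that multiplication by $\omega$ carries $(k[\Delta]/\langle\Theta\rangle)_{i+1}$ onto $(k[\Delta]/\langle\Theta\rangle)_{i+2}$. My plan is to decompose the target as $(k[\Delta]/\langle\Theta\rangle)_{i+2}=\sum_{v\in V}\langle x_v\rangle_{i+2}$, use Proposition \ref{link to ideal} to transfer the problem to the vertex links, dispose of the ``good vertex'' summands via the hypothesis, and dispose of the ``bad vertex'' summands via a genericity/elimination trick.

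For each good vertex $v$ (one of the $\ge n-d$ vertices with $L^i_s(\lk v)\neq\emptyset$), fix a facet $\sigma_v\supseteq v$ and form the corresponding $\Theta^v$ as in Proposition \ref{link to ideal}. Provided the chosen $(\omega,\Theta)$ satisfies $(\omega,\Theta^v)\in L^i_s(\lk v)$, we have $(k[\lk v]/\langle\Theta^v\rangle)_{i+1}=\omega\cdot(k[\lk v]/\langle\Theta^v\rangle)_i$. Applying the degree-one surjective $R$-module map $x_v:k[\lk v]/\langle\Theta^v\rangle\twoheadrightarrow\langle x_v\rangle$ and using that it commutes with multiplication by $\omega\in R_1$ gives $\langle x_v\rangle_{i+2}=\omega\cdot\langle x_v\rangle_{i+1}\subseteq\omega\cdot(k[\Delta]/\langle\Theta\rangle)_{i+1}$. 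So every good vertex already contributes to the image of $\omega$.

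The main obstacle is handling the at most $d$ bad vertices, and this is where the hypothesis that $k$ is infinite is needed. Let $B$ denote the set of bad vertices and extend $B$ to a $d$-element subset $C$ of $V$. For a sufficiently generic l.s.o.p.\ $\Theta$, the $d\times d$ submatrix of its coefficient matrix indexed by $C$ is invertible; this is a Zariski-open condition compatible with, and dense within, the l.s.o.p.\ condition. Row-reducing $\Theta$ to reduced echelon form with pivot columns $C$ produces an equivalent generating set for $\langle\Theta\rangle$ in which, modulo $\langle\Theta\rangle$, every $x_v$ with $v\in C$ is a $k$-linear combination of the $x_w$ with $w\notin C$. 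Rewriting degree $i+2$ monomials accordingly yields
\[
(k[\Delta]/\langle\Theta\rangle)_{i+2}=\sum_{v\notin C}\langle x_v\rangle_{i+2},
\]
a sum over good vertices only.

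It remains to check that a single $(\omega,\Theta)$ can simultaneously witness all the required properties. For each good $v$, the locus $\{(\omega,\Theta):(\omega,\Theta^v)\in L^i_s(\lk v)\}$ is Zariski-open (surjectivity of a linear map is defined by non-vanishing of maximal minors) and nonempty (use the hypothesis together with the fact that the rational map $\Theta\mapsto\Theta^v$ is surjective onto l.s.o.p.'s of $\lk v$, since any l.s.o.p.\ for $\lk v$ extends to one for $\Delta$). Intersecting these finitely many dense opens with the l.s.o.p.\ condition and the all-$d\times d$-minors-nonzero condition gives a nonempty Zariski-open set in the parameter space, which has a $k$-point because $k$ is infinite. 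Any such $(\omega,\Theta)$ lies in $L^{i+1}_s(\Delta)$, completing the argument.
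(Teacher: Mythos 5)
Your proof is correct and follows essentially the same strategy as the paper's: choose $(\omega,\Theta)$ in the (nonempty, because $k$ is infinite) intersection of the Zariski-open conditions guaranteeing surjectivity on each good vertex link together with a genericity condition that lets one eliminate the at most $d$ bad variables, then push surjectivity through Proposition~\ref{link to ideal} on the ideals $\langle x_v\rangle$. The only cosmetic difference is that you eliminate all $C$-variables at once and decompose the degree-$(i+2)$ piece as $\sum_{v\notin C}\langle x_v\rangle$, whereas the paper runs a two-case check on monomials (replacing a single bad variable by good ones via a generic $\theta\in\langle\Theta\rangle$); your added remark justifying nonemptiness of the open set $\{\Theta:(\omega,\Theta^v)\in L^i_s(\lk v)\}$ via the map $\Theta\mapsto\Theta^v$ being dominant is a useful elaboration of a point the paper leaves terse.
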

   
  \begin{proof}
    Let $V_s=\{v_j\}^{n-d}_{j=1}$ be vertices of $\Delta$ such that for every $j,$ $L^i_s(\lk v_j) \neq \emptyset.$ For each such vertex $v$, consider the set of pairs 
    $$L^i_s(v) = \{(\omega, \Theta):\Theta \mbox{ is a l.s.o.p. for } k[\Delta], \mbox{and } (\omega, \Theta^{\{v\}}) \in L^i_s(\lk v)\}.$$ 
    
    \noindent Since $\Theta \to \Theta^{\{v\}}$ is a rational map and $L^i_s(\lk v)$ is a nonempty Zariski open set \cite{Sw7}, $L^i_s(v)$ is a nonempty Zariski open set.  We call $\Theta$ {\it generic} if every $d \times d$ minor of the associated matrix $T$ is nonsingular.   To finish the proof we show that 
    
    $$L = \{(\omega,\Theta): \Theta \mbox{ is generic}\} \cap \displaystyle\bigcap^{n-d}_{j=1} L^i_s(v)$$
    is a nonempty subset of $L^{i+1}_s(\Delta).$
  
Since each of the intersecting subsets in $L$ is a nonempty open Zariski subset of $k^{(d+1)n}, L$ is nonempty.  So let $(\omega, \Theta) \in L.$ In order to see that multiplication $\omega:(k[\Delta]/\langle \Theta \rangle)_{i+1} \to  (k[\Delta]/\langle \Theta \rangle)_{i+2}$ is surjective it is sufficient to show that every monomial in $(k[\Delta]/\langle \Theta \rangle)_{i+2}$ is in the image.  We consider two cases.

Case 1: The monomial can be written in the form $x_j \cdot x^\alpha,$ where $v_j \in V_s.$ Using the fact that $x^\alpha$ is in the image of multiplication by $\omega$ in $k[\lk v_j]/\langle \Theta^{\{v_j\}} \rangle,$ and Proposition  \ref{link to ideal}, we  see that this monomial is in the image of multiplication by $\omega$ in $\langle x_j \rangle \subseteq k[\Delta]/\langle \Theta \rangle.$
   
Case 2:  All of the variables in the monomial correspond to vertices not in $V_s.$  Write the monomial $x_l \cdot x^\alpha.$  Since $\Theta$ is generic, it contains an element $\theta$ of the form
$$\theta = x_l + \displaystyle\sum_{j, v_j \in V_s} \theta_j x_j.$$
This implies that the monomial is equivalent to a sum of monomials from Case 1 and hence is in the image of multiplication by $\omega.$
  \end{proof}

The above results suggest the following conjectures for homology manifolds (with or without boundary).

\begin{conj}(Manifold algebraic $g$-conjecture)

If $\Delta$ is a homology manifold, then $L^i_s(\Delta) \neq \emptyset$ for $i \ge \lceil d/2 \rceil.$
\end{conj}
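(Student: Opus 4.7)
The plan is to deduce this conditionally on the sphere algebraic $g$-conjecture (Conjecture \ref{g-conj}) by bootstrapping through vertex links via Theorem \ref{g-thm to manifold}, in precisely the spirit of the ``almost logical equivalence'' advertised in the introduction. The point is that every vertex link of a homology manifold is either a homology sphere (if the vertex is interior) or a homology ball (if it lies on the boundary), so the sphere conjecture supplies the hypothesis of Theorem \ref{g-thm to manifold} essentially for free.

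Granting Conjecture \ref{g-conj}, I would first handle the interior case. Fix $v \notin \partial \Delta$, so $\lk v$ is a homology sphere of dimension $d-2$, and let $\omega$ be a Lefschetz element for $k[\lk v]/\langle \Theta \rangle$. The isomorphisms $\omega^{d-1-2j} : (k[\lk v]/\langle \Theta \rangle)_j \to (k[\lk v]/\langle \Theta \rangle)_{d-1-j}$ for $j \le \lfloor (d-1)/2 \rfloor$, combined with the symmetry $h_j(\lk v) = h_{d-1-j}(\lk v)$ and the resulting unimodality, force multiplication $\omega : (k[\lk v]/\langle \Theta \rangle)_k \to (k[\lk v]/\langle \Theta \rangle)_{k+1}$ to be surjective for every $k \ge \lfloor (d-1)/2 \rfloor$ (each such isomorphism factors through a chain of multiplications by $\omega$, and past the midpoint the $h$-vector is weakly decreasing). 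Hence $L^k_s(\lk v) \neq \emptyset$ throughout that range, and applying Theorem \ref{g-thm to manifold} once for each $k = \lfloor (d-1)/2 \rfloor, \ldots, d-2$ delivers $L^{k+1}_s(\Delta) \neq \emptyset$ for every $k+1 \ge \lceil d/2 \rceil$, which is exactly the conclusion.

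For boundary vertices, whose links are homology balls of dimension $d-2$, I would invoke the Lefschetz ball notion from the excerpt and establish the same surjectivity range, presumably by realizing $\lk v$ as a full-dimensional subcomplex of an auxiliary Lefschetz sphere $\Sigma$ and transferring the Lefschetz data across the short exact sequence of face rings relating $\Sigma$ and $\lk v$. With this in hand (or, alternatively, by showing that enough vertices of $\Delta$ are interior so that Theorem \ref{g-thm to manifold}'s $n-d$ threshold is met by the non-boundary vertices alone) every vertex link satisfies the needed hypothesis, which comfortably exceeds that threshold.

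The main obstacle is of course Conjecture \ref{g-conj} itself, which is widely regarded as one of the deepest open problems in the area; this plan is by design an almost-reduction rather than an unconditional argument. Secondary technical points are (i) the boundary argument, which requires either an independent Lefschetz-ball extension of Conjecture \ref{g-conj} or a clean deduction of it from the sphere version, and (ii) the genericity bookkeeping already faced in the proof of Theorem \ref{g-thm to manifold}, namely that a single pair $(\omega, \Theta)$ can be chosen in the intersection of the relevant nonempty Zariski-open sets across all vertex links simultaneously; this is routine once each local set is known to be nonempty Zariski-open.
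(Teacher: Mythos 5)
What you were given is a conjecture, not a theorem: the paper neither proves it nor claims to, and any argument must be conditional on Conjecture \ref{g-conj}, which is itself open. Your reduction is exactly the one the paper intends when it calls the two conjectures ``almost logically equivalent'' in the introduction and in the discussion around Figure \ref{implications}. Granting Conjecture \ref{g-conj}, every interior vertex link is a Lefschetz homology sphere of dimension $d-2$; the standard hard-Lefschetz unimodality argument gives $L^i_s(\lk v) \neq \emptyset$ for every $i \ge \lfloor (d-1)/2 \rfloor$; and running Theorem \ref{g-thm to manifold} once for each such $i$ yields $L^{j}_s(\Delta) \neq \emptyset$ for all $j \ge \lfloor (d-1)/2 \rfloor + 1 = \lceil d/2 \rceil$, which is the conjectured conclusion. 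The ball issue you flag is the genuine remaining gap: links of boundary vertices are homology balls, to which Conjecture \ref{g-conj} as stated says nothing, and the Lefschetz-ball mechanism (realize the ball as a full-dimensional subcomplex of a Lefschetz sphere, e.g.\ its double, and descend the surjectivity through the face-ring quotient) requires an argument beyond what is written; the paper itself only carries out the two-dimensional ball case unconditionally via \cite{St3}, in the corollary immediately following Theorem \ref{g-thm to manifold}. Your alternative escape of hoping that $n-d$ interior vertices suffice does not work in general, since boundary vertex counts are unbounded. In short: a correct reconstruction of the intended conditional reduction, with the ball extension of Conjecture \ref{g-conj} correctly identified as an additional unstated hypothesis.
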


\begin{conj}(Manifold $g$-conjecture)

If $\Delta$ is a homology manifold, then $h^\prime_i \ge h^\prime_{i+1} + \binom{d-1}{i} \beta_{i-1}$ for $i \ge \lceil d/2 \rceil.$
\end{conj}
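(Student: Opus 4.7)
The plan is to reduce this conjecture to the algebraic $g$-conjecture for homology spheres (Conjecture \ref{g-conj}), using the tower already built in this section: Proposition \ref{L^i} packages the conjectured combinatorial inequality as the existence of one algebraic object, and Theorem \ref{g-thm to manifold} reduces that object to analogous objects on vertex links.

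First I would note that by Proposition \ref{L^i}, the full chain of inequalities $h'_i \ge h'_{i+1} + \binom{d-1}{i}\beta_{i-1}$ for $i \ge \lceil d/2 \rceil$ follows from the existence of a single pair $(\omega, \Theta) \in L^{\lceil d/2 \rceil}_s(\Delta)$. Applying Theorem \ref{g-thm to manifold} with $i = \lceil d/2 \rceil - 1$, the existence of such a pair reduces to supplying pairs in $L^{\lceil d/2 \rceil - 1}_s(\lk v)$ for all but at most $d$ vertices $v$ of $\Delta$.

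In the boundaryless case every $\lk v$ is a $(d-2)$-dimensional homology sphere $S$. Granting Conjecture \ref{g-conj} for $S$, choose $\omega \in L(S)$. The Lefschetz property, combined with the Dehn--Sommerville symmetry $h_j(S) = h_{d-1-j}(S)$, forces $\omega:(k[S]/\langle\Theta^{\{v\}}\rangle)_j \to (k[S]/\langle\Theta^{\{v\}}\rangle)_{j+1}$ to be surjective for every $j$ from the middle upward. A quick parity check---for $d = 2k$ one has the isomorphism $\omega: A_{k-1} \to A_k$, and for $d = 2k+1$ one has the Poincar\'e-dual surjection $\omega: A_k \to A_{k+1}$---shows that the surjectivity regime begins exactly at $j = \lceil d/2 \rceil - 1$. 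Hence $L(S) \subseteq L^{\lceil d/2 \rceil - 1}_s(S)$ and every vertex link supplies the required pair.

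If $\partial\Delta \neq \emptyset$, vertices on the boundary have homology-ball links; when $\partial\Delta$ contributes more than $d$ vertices one additionally needs surjectivity above the middle for Lefschetz balls, which follows once an ambient Lefschetz sphere of which the ball is a full-dimensional subcomplex is available. The single substantive obstacle in the whole plan is therefore Conjecture \ref{g-conj} itself, open even in the sphere case; modulo it, the Manifold $g$-conjecture in dimension $d-1$ is a theorem conditional on the sphere $g$-conjecture in dimension $d-2$, which is precisely the ``almost logical equivalence'' flagged in the introduction. Any class of spheres known to be Lefschetz---e.g.\ polytopal ones via Theorem \ref{g-thm}---automatically yields the Manifold $g$-conjecture for manifolds all of whose vertex links lie in that class.
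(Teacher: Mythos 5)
The statement you were asked to ``prove'' is a conjecture, not a theorem; the paper does not prove it, nor could it --- as the surrounding discussion (and Figure \ref{implications}) makes clear, the Manifold $g$-conjecture is offered precisely because it would follow from the open Conjecture \ref{g-conj}. So there is no paper proof to compare against, and your proposal rightly does not claim to be a proof: it is a conditional reduction, which is exactly the content the paper records informally in the paragraph after Theorem~\ref{g-thm to manifold} and in Figure~\ref{implications}.

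Your reduction is correct and matches the paper's intended logic. Proposition~\ref{L^i} converts nonemptiness of $L^{\lceil d/2\rceil}_s(\Delta)$ into the full run of inequalities; Theorem~\ref{g-thm to manifold} with $i=\lceil d/2\rceil-1$ pushes the problem to vertex links; and for a $(d-2)$-dimensional homology sphere $S$ (so its face ring uses a $(d-1)$-element l.s.o.p.) the Lefschetz property gives surjectivity $\omega: A_j\to A_{j+1}$ for $j\ge\lfloor(d-1)/2\rfloor$, and your identity $\lfloor(d-1)/2\rfloor=\lceil d/2\rceil-1$ is exactly the parity bookkeeping that makes the ranges line up --- this is the same observation the paper states just before Proposition~\ref{L^i} as $L(\Delta)\subseteq L^{\lfloor d/2\rfloor}_s(\Delta)$. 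Two small caveats worth making explicit: Theorem~\ref{g-thm to manifold} is stated over an infinite field, so the conditional result inherits that hypothesis; and in the boundary case you need $n-d$ of the links to satisfy the surjectivity hypothesis, which (when fewer than $n-d$ vertices are interior) requires the Lefschetz-ball notion whose definition itself presupposes an ambient Lefschetz sphere --- so the ``almost'' in ``almost logically equivalent'' is doing real work there, as you implicitly acknowledge. Overall this is a faithful and careful unpacking of the implication chain the paper sketches; it is not, and cannot be, a proof of the conjecture.
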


If Kalai's conjecture concerning the kernel of multiplication by a one form is correct (see discussion  following Theorem \ref{kernel}), then the manifold $g$-conjecture would be $h^\prime_i \ge h^\prime_{i+1} + \binom{d}{i} \beta_{i-1}.$ Figure \ref{implications} shows the interrelationships among the various $g$-conjectures.  The dotted arrows indicate partial implications.  The manifold algebraic $g$-conjecture does not imply the existence of Lefschetz elements for homomlogy spheres.  However, the surjective maps promised by the manifold algebraic $g$-conjecture, combined with the Gorenstein property of  face rings of homology spheres, is enough to establish the injective maps needed to establish the $g$-conjecture.  The manifold $g$-conjecture would insure that $g$-vectors of homology spheres are nonnegative, but would not show that they are M-vectors.

\begin{figure} 
 \scalebox{0.50}[0.50]{\includegraphics{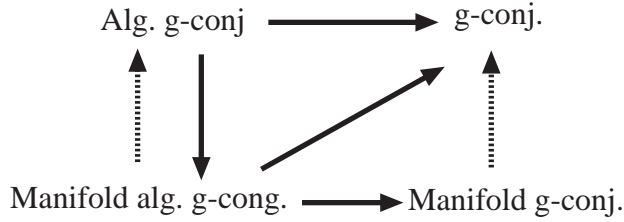}}
  \caption{Various $g$-conjectures} \label{implications}
\end{figure}


Even without an affirmation of the algebraic $g$-conjecture, Theorem \ref{g-thm to manifold} can be used  effectively to limit the possible $h$-vectors of homology manifolds.   
  
  \begin{cor}
    If $k$ has characteristic zero and $\Delta$ is a $k$-homology manifold, then $L^{d-2}_s(\Delta) \neq \emptyset.$
  \end{cor}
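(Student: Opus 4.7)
The plan is to apply Theorem~\ref{g-thm to manifold} with $i = d - 3$: if I can show that $L^{d-3}_s(\lk v) \neq \emptyset$ for at least $n - d$ vertices $v$ of $\Delta$, the conclusion $L^{d-2}_s(\Delta) \neq \emptyset$ follows immediately. I will verify this at every vertex $v$ whose link is a $k$-homology sphere, which accounts for all of $\Delta$'s vertices when $\partial \Delta = \emptyset$ and otherwise for all but the vertices of $\partial \Delta$.

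Fix such a vertex $v$, so that $\lk v$ is a $(d-2)$-dimensional $k$-homology sphere. The face-ring quotient $A := k[\lk v] / \langle \Theta^v \rangle$ is then a graded Artinian Gorenstein $k$-algebra with one-dimensional socle in degree $d - 1$. Poincar\'e duality supplies perfect pairings $A_j \times A_{d-1-j} \to A_{d-1} \cong k$, under which the $k$-linear transpose of multiplication by a linear form $\omega \colon A_j \to A_{j+1}$ is multiplication by the same $\omega$ from $A_{d-2-j}$ to $A_{d-1-j}$. Setting $j = 1$, injectivity of $\omega \colon A_1 \to A_2$ is equivalent to surjectivity of $\omega \colon A_{d-3} \to A_{d-2}$. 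Theorem~\ref{injectivity}, applied to the characteristic-zero homology manifold without boundary $\lk v$ (of dimension $d - 2 \ge 2$, so that its own dimension parameter is $d - 1 \ge 3$ under the standing assumption $d \ge 4$), furnishes a nonempty Zariski open set of pairs $(\omega, \Theta^v)$ for which $\omega \colon A_1 \to A_2$ is injective. By the duality above, the same pairs make $\omega \colon A_{d-3} \to A_{d-2}$ surjective, i.e.\ $L^{d-3}_s(\lk v) \neq \emptyset$. The intersection-of-Zariski-opens argument in the proof of Theorem~\ref{g-thm to manifold} ensures these open conditions can be met simultaneously across all relevant links.

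The main technical obstacle is the Gorenstein Poincar\'e duality step, namely the identification of the $k$-linear transpose of multiplication by a fixed $\omega$ with multiplication by the same $\omega$ in the dual degree, so that an injection in degree $1 \to 2$ dualizes to a surjection in degree $(d-3) \to (d-2)$. Once this is granted, the corollary reduces to routine bookkeeping between Theorems~\ref{injectivity} and~\ref{g-thm to manifold}. A secondary subtlety concerns $\Delta$ with boundary: the links of boundary vertices are homology balls rather than spheres, so a direct analogue of the duality uses the canonical module of $k[\lk v] / \langle \Theta^v \rangle$ in place of Gorenstein self-duality. However, unless $\partial \Delta$ has more than $d$ vertices, the interior vertices already supply the $n - d$ good vertices that Theorem~\ref{g-thm to manifold} demands, so in most cases no supplement is required.
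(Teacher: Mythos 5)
Your approach is genuinely different from the paper's. The paper proves the corollary by induction on dimension: the base case (dimension $2$) is handled by noting that $2$-dimensional homology spheres are polytopal (Steinitz) so Theorem~\ref{g-thm} applies, that $L^1_s$ is nonempty for $2$-dimensional homology balls by Stanley's monotonicity result \cite{St3}, and then Theorem~\ref{g-thm to manifold} propagates the conclusion upward. You instead bypass the induction entirely: you observe that for a vertex $v$ with spherical link, the Artinian reduction $A = k[\lk v]/\langle\Theta^v\rangle$ is a Poincar\'e duality algebra with socle in degree $d-1$, so the injectivity $\omega\colon A_1\to A_2$ of Theorem~\ref{injectivity} is equivalent (by transposing multiplication across the pairing $A_j\times A_{d-1-j}\to k$) to the surjectivity $\omega\colon A_{d-3}\to A_{d-2}$ that $L^{d-3}_s(\lk v)$ requires. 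This is a clean one-step argument that makes the duality mechanism explicit, and it correctly establishes the corollary whenever $\partial\Delta = \emptyset$.

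However, the corollary is stated for homology manifolds with or without boundary (and the paper uses it in that generality, e.g.\ inequality (\ref{2 to 1}) in Theorem~\ref{h by betti}), and here your argument has a real gap. The links of boundary vertices are homology balls; their Artinian reductions are not Gorenstein, so the self-duality you invoke fails, and Theorem~\ref{injectivity} as stated requires a manifold \emph{without} boundary in any case. Your proposed fallback --- that the interior vertices alone supply the $n-d$ good vertices demanded by Theorem~\ref{g-thm to manifold} --- does not hold in general: $\partial\Delta$ is a closed $(d-2)$-dimensional homology manifold and therefore has at least $d$ vertices, with equality only when $\partial\Delta$ is the boundary of a $(d-1)$-simplex. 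So whenever $\Delta$ has a nontrivial boundary, boundary vertices outnumber $d$ and the interior vertices do not reach the $n-d$ threshold. The allusion to the canonical module is not a substitute either: for a non-Gorenstein Artinian quotient, the transpose of multiplication by $\omega$ is a map between graded pieces of the canonical module, not of the ring itself, so it yields no statement about $\omega\colon A_{d-3}\to A_{d-2}$. To close the gap you need a separate argument for ball links --- for instance the route the paper itself takes via \cite{St3} and induction, or the Lefschetz-ball notion the paper defines via full-dimensional embedding in a Lefschetz sphere.
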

  
  \begin{proof}
     Every two dimensional homology sphere $\Delta^\prime$ is the boundary of a simplicial 3-polytope, hence by Theorem  \ref{g-thm} $L(\Delta^\prime) \neq \emptyset.$ By \cite{St3} $L^1_s$ is nonempty for two dimensional homology balls.   Now apply induction and Theorem \ref{g-thm to manifold}.
  \end{proof}

  \begin{thm}  \label{h by betti}
    If $\Delta$ is a homology manifold and $\beta_i$ are the rational Betti numbers of $\Delta$, then 
    \begin{equation} \label{2 to 1}
    h^\prime_{d-2} \ge h^\prime_{d-1} + (d-1) \beta_{d-3} 
    \end{equation}
    If, in addition,  $\Delta$ is  closed, then
    \begin{equation} \label{orientable 2 to 1}
    h_2 \ge h_1 + \binom{d+1}{2} \beta_1 - \binom{d-1}{2} \beta_2.
    \end{equation}
    Furthermore, if $\Delta$ is closed, $d \ge5, \beta_2 = 0$ and $ h_2 =h_1 + \binom{d+1}{2} \beta_1,$ then $\Delta \in \mathcal{H}^{d-1}.$
    \end{thm}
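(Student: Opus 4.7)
The first inequality is immediate: the Corollary preceding this theorem (with $k=\Q$) gives $L^{d-2}_s(\Delta)\ne\emptyset$, and Proposition \ref{L^i} with $i=j=d-2$ then yields $h^\prime_{d-2}\ge h^\prime_{d-1}+\binom{d-1}{d-2}\beta_{d-3}=h^\prime_{d-1}+(d-1)\beta_{d-3}$.

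For the second inequality, assume $\Delta$ is closed. I would expand $h^\prime_{d-2}-h^\prime_{d-1}$ using Schenzel's formula and reduce $h_{d-2}-h_{d-1}$ to $h_2-h_1$ via the generalized Dehn-Sommerville equations; these combine to
$$h_{d-2}-h_{d-1}=(h_2-h_1)+\binom{d+1}{2}\bigl(\chi(\Delta)-\chi(S^{d-1})\bigr).$$
Expanding $\chi(\Delta)-\chi(S^{d-1})$ as an alternating sum of Betti numbers (using $\beta_0=\beta_{d-1}=1$) and applying Poincar\'e duality $\beta_i=\beta_{d-1-i}$ for closed orientable $\Delta$, the Schenzel alternating sums cancel extensively with the Euler-characteristic expansion. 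Routine bookkeeping yields the identity
$$h^\prime_{d-2}-h^\prime_{d-1}-(d-1)\beta_{d-3}=(h_2-h_1)-\binom{d+1}{2}\beta_1+\binom{d-1}{2}\beta_2,$$
and combining with the first inequality gives the second.

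For the equality characterization, Poincar\'e duality forces $\beta_{d-3}=\beta_2=0$, so equality in the second inequality is equivalent to $h^\prime_{d-2}=h^\prime_{d-1}$. Choose any $(\omega,\Theta)\in L^{d-2}_s(\Delta)$; then $\omega:(k[\Delta]/\langle\Theta\rangle)_{d-2}\to (k[\Delta]/\langle\Theta\rangle)_{d-1}$ is a surjection between spaces of equal dimension, hence an isomorphism. For each vertex $v$, the principal ideal $\langle x_v\rangle$ is stable under multiplication by $\omega$, so the restriction $\omega:\langle x_v\rangle_{d-2}\to\langle x_v\rangle_{d-1}$ is injective. By Proposition \ref{link to ideal} (using that $\Delta$ is closed), the map $x_v:k[\lk v]/\langle\Theta^v\rangle\to\langle x_v\rangle$ is a degree-one isomorphism, so $\dim\langle x_v\rangle_i$ equals the Hilbert function dimension of $(k[\lk v]/\langle\Theta^v\rangle)_{i-1}$ for the homology sphere $\lk v$. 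Dehn-Sommerville in $\lk v$ then gives $\dim\langle x_v\rangle_{d-2}=h_{d-3}(\lk v)=h_2(\lk v)$ and $\dim\langle x_v\rangle_{d-1}=h_{d-2}(\lk v)=h_1(\lk v)$. Injectivity yields $h_2(\lk v)\le h_1(\lk v)$, and the rigidity inequality (Theorem \ref{rigidity}) applied to $\lk v$, valid because $d\ge 5$, gives the reverse inequality. Hence $h_1(\lk v)=h_2(\lk v)$ at every vertex, so by Corollary \ref{Kal2} each $\lk v$ is a stacked sphere, and Theorem \ref{walk1} concludes $\Delta\in\mathcal{H}^{d-1}$.

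The main obstacle is the algebraic reduction for the second inequality: the alternating Betti sums coming from Schenzel must be tracked against the alternating sum in $\chi(\Delta)-\chi(S^{d-1})$, and it is Poincar\'e duality that makes everything collapse to the stated clean form. The equality case is then conceptual once one sees that restricting the isomorphism $\omega$ to each principal ideal $\langle x_v\rangle$ converts global tightness into the vertex-link inequality $h_2(\lk v)\le h_1(\lk v)$, which upon combining with rigidity forces every link to be a stacked sphere.
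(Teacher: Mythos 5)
Your proposal is correct and follows essentially the same route as the paper: the first inequality from the corollary giving $L^{d-2}_s(\Delta)\ne\emptyset$ plus Proposition \ref{L^i}; the second via Schenzel, generalized Dehn--Sommerville, and Poincar\'e duality (the paper simply cites \cite{No} for the intermediate formulas $h'_{d-2}=h_2+\binom{d}{2}(\beta_2-\beta_1)$, $h'_{d-1}=h_1+d\beta_1$, which your ``routine bookkeeping'' reproduces); and the equality case by restricting the isomorphism $\omega$ to each principal ideal $\langle x_v\rangle$, transferring to the vertex link via Proposition \ref{link to ideal}, and invoking rigidity plus Corollary \ref{Kal2} and Theorem \ref{walk1}. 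One minor note: your bookkeeping of the degree shift in the link ring (arriving at $h_2(\lk v)\le h_1(\lk v)$ from degrees $d-3\to d-2$) is in fact the correct indexing, cleaner than the degrees $d-2\to d-1$ printed in the paper's proof.
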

    
    \begin{proof}  The first formula is an immediate consequence of Proposition \ref{L^i} and the above corollary.  So assume that $\Delta$ is closed and orientable.  By Schenzel's formula $h_1 = h^\prime_1$ and $h_2 = h^\prime_2.$  On the other hand, by \cite{No}, $h^\prime_{d-2} = h^\prime_2 + \binom{d}{2} (\beta_2 - \beta_1)$ and $h^\prime_{d-1} = h^\prime_1 + d \beta_1.$  Combining this with  (\ref{2 to 1}) and Poincar\'{e} duality gives (\ref{orientable 2 to 1}).
    
    Now suppose $\beta_2 = 0,  h_2 = h_1 + \binom{d+1}{2} \beta_1, d \ge 5$ and $\Delta$ is closed.   These conditions imply that $h^\prime_{d-2} = h^\prime_{d-1}.$ This means that for  $(\omega,\Theta) \in L^{d-2}_s(\Delta)$   multiplication $\omega: (k[\Delta]/\langle \Theta \rangle)_{d-2} \to (k[\Delta]/\langle \Theta \rangle)_{d-1}$ is a bijection. So it must be an injection when considered as a map $\omega: \langle x_i \rangle_{d-2} \to \langle x_i \rangle_{d-1}.$    By Proposition \ref{link to ideal}, multiplication $\omega:(k[\lk v_i]/\langle \Theta^{x_i} \rangle)_{d-2} \to (k[\lk v_i]/\langle \Theta^{x_i} \rangle)_{d-1}$ must be an injection.   Hence, for any vertex $v_i$ of $\Delta,$  $h_{d-2}(\lk v_i) \le h_{d-1}(\lk v_i).$  Each such link is a homology sphere, so the generalized Dehn-Sommerville equations imply $h_2 \le h_1$ in each vertex link. The rigidity inequality and Corollary \ref{Kal2} imply that every vertex link is a stacked sphere.  Finally, Theorem \ref{walk1} says that $\Delta \in \mathcal{H}^{d-1}.$
    
      \end{proof}

   Kalai conjectured in \cite{Kal2} that for  homology manifolds without boundary
\begin{equation} \label{Kalai conj}
h_2 - h_1 \ge \binom{d+1}{2} \beta_1.
\end{equation}   The above theorem verifies this conjecture when $\beta_2 = 0$ and $\Delta$ is closed. Theorem \ref{covering 2} confirms (\ref{Kalai conj}) when $\beta_1 = 1.$ If Kalai's conjecture concerning the lower bound for the dimension of the kernel of multiplication by a one form is correct (see comment following Theorem \ref{kernel}), the suitably altered statement of Proposition \ref{L^i} and proof of Theorem \ref{h by betti} would prove (\ref{Kalai conj}) for closed homology manifolds.
   In dimension four with $\beta_2 = 0,$  (\ref{Kalai conj})  is equivalent to (\ref{walkup}).

   \section{Constructions} \label{Constructions}
   
   In order to completely characterize the $f$-vectors of all possible triangulations of a given space, we will need ways of constructing new triangulations from old ones which preserve homeomorphism type and alter the $f$-vector in a predictable fashion.  Two such techniques are bistellar moves and central retriangulations.  
   
   Let $F$ and $G$ be disjoint subsets of the vertices of $\Delta$ such that $|F|+|G|=d+1.$  Suppose that the vertex induced subcomplex of $\Delta$ with respect to $F \cup G$ is $F\ast \partial G.$   Removing $F \ast \partial G$ and replacing it with $\partial F \ast G$ is a  {\it( $|G|-1$)-bistellar move.} A 0-bistellar move is also called subdividing a  facet.  If $\Delta^\prime$ is obtained from $\Delta$ by a bistellar move, then $\Delta^\prime$ is homeomorphic to $\Delta$. The effect of a bistellar move on the $h$-vector is contained in the proposition below.  We omit its elementary proof.
   
   \begin{prop}
   Suppose $\Delta^\prime$ is obtained from $\Delta$ by an $m$-bistellar move.   Then
   $$h_i(\Delta^\prime) = 
      \begin{cases}
         h_i(\Delta) & i \le m \mbox{ or } i \ge d-m \\
         h_i(\Delta)+1 & m < i < d-m.
       \end{cases}$$
   \end{prop}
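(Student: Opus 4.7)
The plan is to compute the difference $h_{\Delta'}(x)-h_\Delta(x)$ directly by enumerating which faces are added and which removed, then to convert via the defining relation $h_\Delta(x+1)=f_\Delta(x)$. Setting $|G|=m+1$ and $|F|=d-m$, I would first identify the faces of $F\ast\partial G$ that are not already in the common boundary $\partial F\ast\partial G$: these are precisely the ``interior'' faces $F\cup B$ with $B\subsetneq G$, which disappear under the move. Dually, the new faces of $\partial F\ast G$ absent from $\partial F\ast\partial G$ are $F'\cup G$ with $F'\subsetneq F$. The hypothesis that the induced subcomplex on $F\cup G$ equals $F\ast\partial G$ guarantees that these are exactly the faces in which $\Delta$ and $\Delta'$ differ.

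Writing $f_\Delta(x)=\sum_\sigma x^{d-|\sigma|}$ and grouping by cardinality would give
\begin{equation*}
f_{\Delta'}(x)-f_\Delta(x) \;=\; \sum_{a=0}^{d-m-1}\binom{d-m}{a} x^{d-m-1-a} \;-\; \sum_{b=0}^{m}\binom{m+1}{b} x^{m-b}.
\end{equation*}
Each sum is a truncated binomial expansion: multiplying by $x$ and completing the series identifies the first with $\bigl((x+1)^{d-m}-1\bigr)/x$ and the second with $\bigl((x+1)^{m+1}-1\bigr)/x$, so after one line of simplification one arrives at the compact form
\begin{equation*}
f_{\Delta'}(x)-f_\Delta(x) \;=\; \frac{(x+1)^{d-m}-(x+1)^{m+1}}{x}.
\end{equation*}

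The substitution $x\mapsto x-1$ then yields
\begin{equation*}
h_{\Delta'}(x)-h_\Delta(x) \;=\; \frac{x^{d-m}-x^{m+1}}{x-1},
\end{equation*}
which for $m<(d-1)/2$ telescopes to the geometric polynomial $x^{m+1}+x^{m+2}+\cdots+x^{d-m-1}$, and is identically zero when $d=2m+1$. Since $h_\Delta(x)=\sum_i h_i\,x^{d-i}$, reading off the coefficient of $x^{d-i}$ gives $h_i(\Delta')-h_i(\Delta)=1$ precisely when $m<i<d-m$ and $0$ otherwise, proving the proposition. I do not expect a real obstacle; the only decision is whether to translate the face-level count using equation (\ref{f by h}) face by face or to route through the generating functions as above, and the latter is cleaner because the sums collapse immediately via the binomial theorem.
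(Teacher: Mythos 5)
The computation is correct, and this is surely the elementary argument the paper omits when it says ``We omit its elementary proof'': enumerate the faces exchanged by the move, collapse the two truncated binomial sums via the binomial theorem to get $f_{\Delta'}(x)-f_\Delta(x)=\bigl((x+1)^{d-m}-(x+1)^{m+1}\bigr)/x$, and substitute $x\mapsto x-1$ to read off the $h$-polynomial difference $\bigl(x^{d-m}-x^{m+1}\bigr)/(x-1)$. One small point worth stating explicitly: your closed form also shows that when $m>(d-1)/2$ (the inverse moves) the relevant $h_i$ \emph{decrease} by one rather than stay fixed, so the proposition as written, and your final sentence ``and $0$ otherwise,'' tacitly assume $m\le(d-1)/2$ --- the only range the paper actually uses, but the hypothesis should be acknowledged rather than left implicit.
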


There are very few manifolds for which a complete characterization of all possible $f$-vectors of triangulations are known.  Aside from $S^1$ and closed $2$-manifolds \cite{JR} \cite{Ri}, the only  other spaces for which this question has been solved are $S^3, S^1 \times S^2, \R P^3,$ the nonorientable $S^2$ bundle over $S^1,$ and $S^4.$  The four $3$-manifolds and $S^4$ were done by Walkup \cite{Wal}.
   
   The first manifold we will consider in detail is $S^1 \times S^3.$  For every $n \ge 11,$ K\"{u}hnel and Lassmann constructed a vertex-transitive triangulation of $S^1 \times S^3$ with $n$ vertices and dihedral symmetry \cite{KL}.  We use $\Delta_{S^1 \times S^3}(n)$ to denote these complexes. (In \cite{KL} they used $M^4_3(n)$ for these triangulations.) Identify the vertices with $[n] = \{1,2,\dots,n\}.$  Since the triangulation is  invariant under the action of $\Z_n,$ it is sufficient to specify for which 4-tuples $(y_1,y_2,y_3,y_4)$ there are simplices of the form $\{x_1,x_2,x_3,x_4,x_5\},$ with $x_{i+1} - x_i = y_i \mod n, 1 \le i \le 4.$ The 4-tuples which generate $\Delta_{S^1 \times S^3}(n)$ are $[1,1,1,2], [1,1,2,1],[1,2,1,1],$ and $[2,1,1,1].$  The link of each vertex has 10 vertices, so $f_1 = 5n$ and $h_2 = 5n - 10 - 4(n-5) = n+10.$ So, $g_2= h_2 - h_1 = 15.$     By the generalized Dehn-Sommerville equations, the $g$-vector of a homology manifold without boundary determines its $h$-vector, and hence its $f$-vector.
\begin{thm}
   The following are equivalent.
   \begin{enumerate}
     \item[i.]
     $(1,g_1,g_2)$ is the $g$-vector of a triangulation of $S^1 \times S^3.$
    
     \item[ii.]
     $(1,g_1,g_2)$ is the $g$-vector of a closed four-dimensional  homology manifold $\Delta$ with $\beta_1 = 1$ and $\beta_2=0.$
     
     \item[iii.]
       $15 \le g_2 \le \binom{g_1+1}{2}.$
    \end{enumerate}
 \end{thm}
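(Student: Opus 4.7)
The plan is to prove the cycle of implications (i) $\Rightarrow$ (ii) $\Rightarrow$ (iii) $\Rightarrow$ (i). The first implication is immediate from the K\"unneth formula: $S^1 \times S^3$ has rational Betti numbers $(1,1,0,1,1)$, so any triangulation is automatically a closed four-dimensional homology manifold with $\beta_1 = 1$ and $\beta_2 = 0$.

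For (ii) $\Rightarrow$ (iii), I would first invoke Poincar\'{e} duality for a closed four-dimensional homology manifold to deduce $\beta_3 = 1$ and $\beta_4 = 1$, so that $\chi(\Delta) = 0$. The upper bound $g_2 \le \binom{g_1+1}{2}$ is just the pure-complex inequality $h_2 - h_1 \le \binom{h_1}{2}$ used in the proof of Theorem \ref{covering 2}, which is Macaulay's criterion applied to the Stanley-Reisner ring in low degrees. The lower bound is an immediate consequence of Theorem \ref{h by betti} with $d = 5$:
\[
h_2 - h_1 \;\ge\; \binom{6}{2}\beta_1 - \binom{4}{2}\beta_2 \;=\; 15\cdot 1 - 6\cdot 0 \;=\; 15.
\]

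For the harder direction (iii) $\Rightarrow$ (i), I would construct the required triangulations in two stages. To realize the boundary case $g_2 = 15$ for every admissible $g_1 \ge 5$, use the K\"uhnel-Lassmann complex $\Delta_{S^1 \times S^3}(g_1+6)$: direct computation from $n = g_1 + 6$ and $f_1 = 5n$ gives $h_1 = g_1+1$ and $h_2 = g_1+16$, so indeed $g_2 = 15$. For arbitrary $g_2 > 15$, I would start from this base and apply a sequence of $g_2 - 15$ one-bistellar moves. Each such move preserves the PL homeomorphism type, leaves $f_0$ (and therefore $g_1$) unchanged, and increases $g_2$ by exactly one, by the bistellar proposition of Section \ref{Constructions}.

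The main obstacle is verifying that $1$-bistellar moves can in fact be chained iteratively all the way up to the upper bound $g_2 = \binom{g_1+1}{2}$, at which point the complex is forced to be $2$-neighborly. A $1$-bistellar move requires two facets $F \cup \{u\}$ and $F \cup \{v\}$ sharing a common $3$-face $F$ such that $\{u,v\} \notin \Delta$. The dihedral symmetry of the starting K\"uhnel-Lassmann triangulation makes an explicit initial flip easy to exhibit; the challenge is to show that whenever $\Delta$ is not yet $2$-neighborly one can always locate a missing edge whose endpoints lie opposite across a shared $3$-face. I expect to handle this by combining the freedom provided by neutral $2$-bistellar moves with the central retriangulation operation mentioned in Section \ref{Constructions}, using them to produce a flippable configuration from any missing edge. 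Once this reachability is secured, combining it with the two bounds from the previous paragraph closes the triangle of equivalences.
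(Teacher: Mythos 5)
Your (i $\Rightarrow$ ii) and (ii $\Rightarrow$ iii) steps are fine; for the lower bound $g_2 \ge 15$ you use Theorem \ref{h by betti} (needing $\beta_2 = 0$), whereas the paper uses Theorem \ref{covering 2} (needing only $\beta_1 > 0$), but both are valid given the stated hypotheses. Your arithmetic identifying $\Delta_{S^1\times S^3}(g_1+6)$ as the $g_2 = 15$ base case is also correct.

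The genuine gap is in (iii $\Rightarrow$ i). You correctly identify that the heart of the argument is showing that one can chain $1$-bistellar moves, each adding a single edge, from $g_2 = 15$ all the way up to $g_2 = \binom{g_1+1}{2}$ with $g_1$ held fixed, but you do not actually produce such a chain. Your proposed fallback—``neutral $2$-bistellar moves'' together with central retriangulation—is not a proof: central retriangulation of a simple $(d-1)$-tree raises $h_1$ by one (Proposition \ref{h of crt}), so it destroys the constraint that $g_1$ stay fixed, and you give no mechanism for undoing this. The paper resolves exactly this obstacle by exploiting the cyclic structure of $\Delta_{S^1\times S^3}(n)$: it groups the nonedges by their circular distance $y-x \bmod n$ (ranging from $6$ to $n-1$), and for each pair $(x,x+k)$ in the $k$-th group it explicitly exhibits the two facets sharing a $3$-face that make the required $1$-bistellar move legal, with the facets for the $k$-th group being created by the moves of the $(k-1)$-st group. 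Without such an explicit inductive scheme (or an alternative argument replacing it), your (iii $\Rightarrow$ i) is incomplete, and the equivalence is not established.
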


   \begin{proof}
    Evidently i $\to$ ii. {\it All} simplicial complexes satisfy $g_2 \le \binom{g_1+1}{2}.$ For $S^1 \times S^3$  with $\beta_1 = 1$, Theorem \ref{covering 2} implies $15 \le g_2$, hence ii $\to$ iii. 
   
   For iii $\to$ i, assume  $ h_2 \le \binom{h_1+1}{2}, h_2-h_1 \ge 15,$ and $h_1 \ge 6.$ As $h_1(\Delta_{S^1 \times S^3}(n)) = n-5$ and $h_2(\Delta_{S^1 \times S^3}(n)) = n+10,$ it is sufficient to show that for each $n$, beginning with $\Delta_{S^1 \times S^3}(n),$ it is possible to perform consecutive 1-bistellar moves, each such move adding exactly one edge to the 1-skeleton,  until the 1-skeleton is the complete graph on $n$ vertices. 
   
   What are the nonedges of $\Delta_{S^1 \times S^3}(n)?$  From the description of the facets, the link of $x \in [n]$ consists of all $y$ within $\pm 5 \mod n$ of $x.$ Hence $(x,y)$ is a nonedge of $\Delta_{S^1 \times S^3}(n)$ if and only if $x$ and $y$ are separated by at least $6$ modulo $n.$  
   
   First, group the nonedges of $\Delta_{S^1 \times S^3}(n)$ by the value of $y-x \mod n,$ where we insist  this difference be between $6$ and $n - 1.$   For instance, if $n = 14,$ then the first group contains the pairs $(1,7), (2,8), \dots, (7,13), \\ (8,14), (9,1), \dots, (14,6).$  Similarly, the second group would contain \\ $(1,8),(2,9),\dots,(6,13),(7,14), (8,1), \dots, (14,7).$  In general, if $n$ is odd, then the nonedges will partition into blocks each of which has cardinality $n,$ while if $n$ is even, the last block will have cardinality $n/2.$
   
   Starting with the first group, for each pair $(x, x+6)$ perform a 1-bistellar move using the facets $\{x, x+1, x+2, x+4,x+5\}$ and $\{x+1, x+2, x+4, x+5, x+6\}.$   Now consider the pairs $(x,x+7)$ in the second group.  From the bistellar move applied to the pair $(x,x+6), x$ is contained in a facet $\{x,x+1,x+2,x+5,x+6\},$ while the bistellar move applied to $(x+1,x+7)$ puts $x+7$ in the facet $\{x+1,x+2,x+5,x+6,x+7\}.$  Hence, we can now perform a 1-bistellar move for each pair in the second group.  Similarly, a bistellar move corresponding to $(x, x+8)$ in the third group can use the facets $\{x, x+1, x+2, x+6, x+7\}$ and $\{x+1, x+2, x+6,x+7,x+8\}$ obtained via the bistellar move from the second group.  Continuing in this way, it is possible to perform bistellar moves until the 1-skeleton is the complete graph on $n$ vertices.

   \end{proof}   
     
    An examination of the proof shows that after $kn$ 1-bistellar moves  the resulting complex once again has dihedral vertex-transitive symmetry.
     
     \begin{cor}
       If $n \ge 11, k \ge 5,$ and $kn \le \binom{n}{2},$ then there is a  vertex-transitive triangulation of $S^1 \times S^3$ with $n$ vertices, $kn$ edges and dihedral symmetry.
            \end{cor}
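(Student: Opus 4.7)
The plan is to unpack the statement in the remark immediately preceding the corollary: that after each block of $n$ consecutive $1$-bistellar moves in the proof of the previous theorem, the resulting complex inherits the dihedral vertex-transitive symmetry of $\Delta_{S^1\times S^3}(n)$. Granting this, performing $k-5$ full blocks produces the required triangulation.

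First, I would recall the setup. The complex $\Delta_{S^1\times S^3}(n)$ has vertex set $\Z_n$ and is invariant under the dihedral group generated by the shift $x\mapsto x+1$ and the reflection $x\mapsto -x$ (the generating $4$-tuples $[1,1,1,2],[1,1,2,1],[1,2,1,1],[2,1,1,1]$ form a single orbit under reversal). The proof of the previous theorem partitions the nonedges by difference $j$ with $6\le j\le\lfloor n/2\rfloor$, and for each $j$ with $6\le j<n/2$ the ``$j$-th group'' consists of the $\Z_n$-orbit of the pair $(0,j)$, which has cardinality $n$. The bistellar move associated to the pair $(x,x+j)$ uses the common $3$-face $F_{x,j}=\{x+1,x+2,x+j-2,x+j-1\}$ of the two facets containing the old diagonal; one checks by induction on $j$ (exactly as in the proof) that these facets are present in the complex at the moment the move is performed.

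Second, I would verify dihedral equivariance of a complete block. Equivariance under $\Z_n$ is immediate, since we apply the same template simultaneously to every $\Z_n$-translate. For the reflection $x\mapsto-x$, note that the pair $(x,x+j)$ is sent to the pair $(y,y+j)$ with $y=-x-j$, and the image of $F_{x,j}$ under the reflection is
\[
\{-x-1,-x-2,-x-j+2,-x-j+1\}=\{y+j-1,y+j-2,y+2,y+1\}=F_{y,j},
\]
because $F_{x,j}$ is symmetric about the midpoint of the nonedge $(x,x+j)$. Hence the set of facets added and the set of facets removed in a full block are each dihedrally invariant, so after a full block the complex still has $n$ vertices, carries the full dihedral symmetry, and has gained exactly $n$ edges.

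Finally, I would assemble the conclusion. Starting from $\Delta_{S^1\times S^3}(n)$ with $5n$ edges, we can carry out $k-5$ successive full blocks, producing a vertex-transitive dihedral triangulation of $S^1\times S^3$ with $n$ vertices and $kn$ edges, provided there are at least $k-5$ full blocks available. Counting blocks: for odd $n$ there are $(n-11)/2$ full blocks (giving maximal $k=(n-1)/2$), and for even $n$ there are $(n-12)/2$ full blocks (giving maximal $k=(n-2)/2$); in both cases the bound $kn\le\binom{n}{2}=\tfrac12 n(n-1)$ forces $k$ to lie in the admissible range and so guarantees enough blocks. The only real thing to check is the equivariance observation in the previous paragraph, which reduces to the symmetry of the template $F_{x,j}$; the rest is bookkeeping already completed in the proof of the preceding theorem.
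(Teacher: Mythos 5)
Your proposal is correct and takes essentially the same route the paper intends: the paper's justification is the one-line remark that "an examination of the proof shows" that each full block of $n$ 1-bistellar moves restores the dihedral vertex-transitive symmetry, and you have simply carried out that examination, identifying the common $3$-face template $F_{x,j}=\{x+1,x+2,x+j-2,x+j-1\}$ and observing that its symmetry about the midpoint of the diagonal $(x,x+j)$ makes a complete block equivariant under both the cyclic shift and the reflection $x\mapsto -x$. The block count matching the bound $kn\le\binom{n}{2}$ is the same bookkeeping as in the preceding theorem's proof. (One tiny inaccuracy: the four generating $4$-tuples form two orbits of size two under reversal, not a single orbit; but the relevant point — that the set is reversal-invariant — still holds.)
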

     
     When $n$ is odd, this is the best result possible since any vertex-transitive triangulation will have $kn$ edges for some $k.$ For even $n,$ any vertex-transitive triangulation must have $n+(kn/2)$ edges, and the catalog of small vertex-transitive triangulations by K\"ohler and Lutz \cite{KoLu} suggests that this may be possible once $n \ge 14.$  The manifolds $\Prefix^{4}{13}^2_1, \Prefix^{4}{13}^2_2,  \Prefix^{4}{14}^3_2, \Prefix^{4}{14}^3_3, \Prefix^{4}{15}^2_1, \Prefix^{4}{15}^2_2,$ and $\Prefix^{4}{15}^2_3$ in \cite{KoLu} come from the above construction.

     As indicated previously, the  complexes $\Delta_{S^1 \times S^3}(n)$ are part of a much larger family of triangulations introduced by K\"{u}hnel and Lassmann in \cite{KL}.  All of their spaces are disk or sphere bundles over tori.  They include a  collection $\Delta_{S^1 \times S^{2m-1}}(n),$ denoted by $M^{2m}_{2m-1}(n)$ in \cite{KL}, of triangulations of $S^1 \times S^{2m-1}$ for $m \ge 2$ and $n \ge 4m+3.$    When $n = 4m+3, \Delta_{S^1 \times S^{2m-1}}(n) = M^{2m},$ which we recall from Theorem \ref{min nonzero betti}, is a minimum triangulation of $S^1 \times S^{2m-1}.$ 
The construction of $\Delta_{S^1 \times S^{2m-1}}(n)$ is along the same lines as $\Delta_{S^1 \times S^3}(n).$ The facets are specified by the $2m$ difference vectors (modulo $n$) obtained by all possible permutations of  $[1,1, \dots, 1,2].$  The resulting complex has dihedral symmetry.  Using 1-bistellar moves in a fashion similar to above, and Theorem \ref{covering 2}, it is possible to prove the following.

\begin{prop}
Let $m \ge 2.$  There exists a triangulation of $S^1 \times S^{2m-1}$ with $n$ vertices and $e$ edges if and only if $n \ge 4m+3$ and $e - (2m+1) n \ge 0.$  If in addition $e$ is a multiple of $n,$ then there exists a triangulation which is vertex-transitive with dihedral symmetry.

\end{prop}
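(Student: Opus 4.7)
The plan is to mimic the proof just given for $S^1 \times S^3$, with $d = 2m+1$ playing the role of $5$. For necessity, since $\pi_1(S^1 \times S^{2m-1}) \cong \Z$ we have $\beta_1 = 1 > 0$, so Theorem \ref{min nonzero betti} gives $n \ge 2d+1 = 4m+3$, and Theorem \ref{covering 2} gives $h_2 - h_1 \ge \binom{d+1}{2}$. Translating via $h_1 = n - (2m+1)$ and $h_2 = e - 2mn + \binom{2m+1}{2}$ turns the second inequality into $e \ge (2m+1)n$. For sufficiency, when $n = 4m+3$ the bound forces $e = \binom{4m+3}{2} = (2m+1)n$, realized by $\Delta_{S^1 \times S^{2m-1}}(4m+3) = M^{2m+1}$. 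For $n > 4m+3$, the strategy is to start with $\Delta_{S^1 \times S^{2m-1}}(n)$, which has exactly $(2m+1)n$ edges (each vertex link contains $2(2m+1)$ vertices, namely those at cyclic distance at most $2m+1$), and to produce triangulations with every larger edge count up to $\binom{n}{2}$ by a sequence of $1$-bistellar moves, each of which preserves the homeomorphism type and, by the proposition preceding the theorem, increases $f_1$ by exactly one.

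To produce these moves, index the non-edges $\{x, x+j\}$ by $j \in \{2m+2, \ldots, n-2m-2\}$ modulo $n$ and process them in groups of common $j$, in increasing order. For the first group, $j = 2m+2$, take $F = \{x+1, \ldots, x+2m+1\} \setminus \{x+k\}$ for a fixed interior index $k$ and $G = \{x, x+2m+2\}$; both $F \cup \{x\}$ and $F \cup \{x+j\}$ then have consecutive difference vectors equal to a permutation of $(1, \ldots, 1, 2)$, so are facets of $\Delta_{S^1 \times S^{2m-1}}(n)$, and the induced subcomplex on $F \cup G$ is $F \ast \partial G$, making the $1$-bistellar move legal. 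For each successive group $j$, the new facets created by the moves in group $j-1$ supply the pair $F \cup \{x\}, F \cup \{x+j\}$ needed for the next set of moves, exactly as the author lays out in the $d = 5$ case; propagating inductively exhausts all non-edges.

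The hard part will be the combinatorial bookkeeping: at each stage one must verify that the two facets $F \cup \{x\}$ and $F \cup \{x+j\}$ required for the next move are actually present in the current complex, and that no superfluous face of $\partial F \ast G$ has already been produced, so that the move genuinely adds the single edge $\{x, x+j\}$ and nothing more. Once that is checked, the vertex-transitive statement is automatic: performing a full $\Z_n$-orbit of moves at a time preserves the dihedral action of $\Delta_{S^1 \times S^{2m-1}}(n)$, so after $k$ complete orbits one obtains a vertex-transitive triangulation with dihedral symmetry and $(2m+1+k)n$ edges, covering every admissible $e$ divisible by $n$.
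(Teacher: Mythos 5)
Your approach is essentially the paper's, which for this proposition gives no more detail than a pointer to the $S^1 \times S^3$ argument together with Theorem~\ref{covering 2}. Your necessity argument, the edge count $(2m+1)n$ for $\Delta_{S^1 \times S^{2m-1}}(n)$, and the first round of moves with $F = \{x+1,\dots,x+2m+1\} \setminus \{x+k\}$ and $G = \{x,x+2m+2\}$ for an interior $k$ are all correct; the induced-subcomplex condition follows because $F$ is a codimension-one face of a closed manifold (hence lies in exactly the two facets $F\cup\{x\}$ and $F\cup\{x+2m+2\}$) while $G$ is a non-edge, and the ``interior'' restriction really is necessary (for $k=1$ or $k=2m+1$ one of those two sets has all-ones difference vector and so fails to be a facet). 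Two small points: the difference index should range over $2m+2 \le j \le \lfloor n/2 \rfloor$ rather than up to $n-2m-2$, since otherwise each non-edge is processed twice; and the stated equivalence also needs $e \le \binom{n}{2}$, an omission present in the paper's statement as well. You leave the inductive propagation of the required facets from group $j$ to group $j+1$ unverified, but so does the paper.
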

     
     A second technique for creating new triangulations out of old ones is the central retriangulation of a simple $(d-1)$-tree.  Let $B$ be a subcomplex of $\Delta$ which is a simplicial ball.  Remove all of the interior faces of $B$ and replace them with the interior faces of the cone on the boundary of $B,$ where the cone point is a new vertex. We call this new complex  the {\it central retriangulation} of $B.$ See Figure \ref{tree} for a simple example in dimension two.  If $\Delta^\prime$ is obtained from $\Delta$ by a retriangulation of $B,$ then $\Delta$ and $\Delta^\prime$ are homeomorphic. 

\begin{figure} 
 \scalebox{0.50}[0.50]{\includegraphics{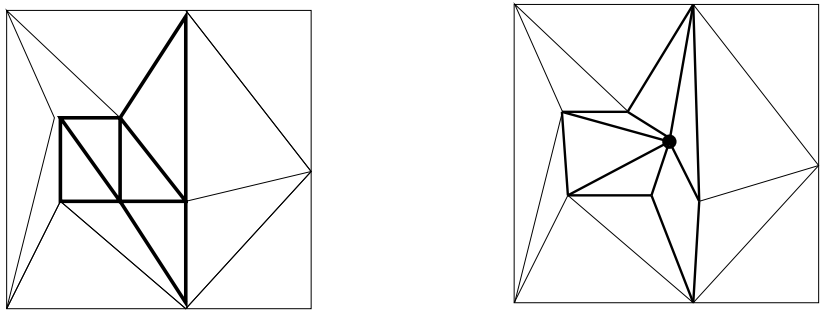}}
  \caption{Central retriangulation of a simple $2$-tree} \label{tree}
\end{figure}
     
    Let $T$ be a pure $(d-1)$-dimensional complex.  We say $T$ is a {\it simple $(d-1)$-tree} if the facets of $T$ can be ordered,  $\sigma_1, \dots, \sigma_m$ such that for each $i \ge 2$ the intersection of $\sigma_i$ with the union of all previous facets is a codimension one face of $\sigma_i$ which is on the boundary of $\cup^{i-1}_{j=1} \sigma_j.$  A simple 1-tree is a path.  The bold face subcomplex on the left hand side of Figure \ref{tree} is a simple 2-tree.  The {\it length} of a simple $(d-1)$-tree is the number facets in the tree.  Each facet, other than the first, adds exactly one new vertex to the tree.  An ordering of the vertices, $(v_1, \dots, v_{d+m-1}),$ of a simple $(d-1)$-tree $T$ is {\it natural} if there exists an ordering of the facets of $T$ such that the vertices of $\sigma_1$ are $(v_1,\dots,v_d)$ and for $i \ge 2,$ the new vertex introduced by $\sigma_i$ is $v_{d+i-1}.$  Any simple $(d-1)$-tree is a simplicial ball and its boundary is a stacked sphere.  
    
    \begin{prop} \label{h of crt}
      If $\Delta^\prime$ is obtained from $\Delta$ by central retriangulation of a simple $(d-1)$-tree of length $m,$ then $h_1(\Delta^\prime) = h_1(\Delta)+1,$ and $h_2(\Delta^\prime) = h_2(\Delta) + m.$
    \end{prop}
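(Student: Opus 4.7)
The plan is a direct $f$-vector computation. Central retriangulation removes the interior faces of $T$ from $\Delta$ and replaces them with the interior faces of the cone $B^\ast = c \ast \partial T$, and since $\partial B^\ast = \partial T$, this gives $f_i(\Delta^\prime) - f_i(\Delta) = f_i(B^\ast) - f_i(T)$ for every $i$. By linearity of (\ref{f by h}) the same relation holds for $h_i$, so it suffices to compute $f_0$ and $f_1$ of $T$ and $B^\ast$ in isolation and then apply $h_1 = f_0 - d$ and $h_2 = \binom{d}{2} - (d-1)f_0 + f_1$.

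From the tree structure, the first facet of $T$ contributes $d$ vertices, and each of the $m-1$ subsequent facets contributes exactly one new vertex and $d-1$ new edges (from that vertex to the $d-1$ vertices of the shared codimension-one face); thus $f_0(T) = d+m-1$ and $f_1(T) = \binom{d}{2} + (d-1)(m-1)$. A short induction on $m$ shows that every vertex of $T$ lies on $\partial T$: when $\sigma_i$ is glued along $\tau_i$, any vertex $v\in T_{i-1}\setminus \tau_i$ keeps its boundary status since every $(d-2)$-boundary face through $v$ avoids $\tau_i$ and survives; any $v\in \tau_i$ picks up new boundary $(d-2)$-faces of $\sigma_i$ containing the newly added vertex. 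Consequently $f_0(\partial T) = d+m-1$, and since $\partial T$ is a stacked $(d-2)$-sphere and $d\geq 4$, formula (\ref{phi}) yields $f_1(\partial T) = \phi_1(d+m-1,d-1) = (d-1)(d+m-1) - \binom{d}{2}$, which equals $f_1(T)$ by direct arithmetic. Hence $T$ has no interior edges.

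The cone $B^\ast = c\ast \partial T$ satisfies $f_0(B^\ast) = f_0(\partial T) + 1$ and $f_1(B^\ast) = f_1(\partial T) + f_0(\partial T)$, so $\Delta f_0 = 1$ and $\Delta f_1 = f_0(\partial T) = d+m-1$. Plugging in: $\Delta h_1 = \Delta f_0 = 1$ and $\Delta h_2 = -(d-1)\Delta f_0 + \Delta f_1 = -(d-1) + (d+m-1) = m$, which is the claim. The only nontrivial step is the pair of structural facts about $T$---that all of its vertices are boundary vertices, and that it has no interior edges (for $d \geq 4$)---and these reduce to the short induction above together with the arithmetic comparison against the stacked-sphere formula.
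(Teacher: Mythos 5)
Your proof is correct; the paper states this proposition without proof, so there is no paper argument to compare against. The $f$-vector bookkeeping $\Delta f_0 = 1$, $\Delta f_1 = f_0(\partial T)$ is right, as is the translation to $\Delta h_1 = 1$, $\Delta h_2 = m$ via $h_1 = f_0 - d$ and $h_2 = \binom{d}{2} - (d-1)f_0 + f_1$. Your two structural claims about $T$ (all vertices on $\partial T$; no interior edges when $d \ge 4$) are both true, and the stacked-sphere edge-count formula $\phi_1(d+m-1,d-1)$ does indeed match $f_1(T)$.

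One remark: the detour through the stacked-sphere formula is heavier than necessary. A cleaner route is the structural observation that a simple $(d-1)$-tree has exactly $2m-1$ interior faces: the $m$ facets and the $m-1$ ``glue'' faces $\tau_2, \dots, \tau_m$ of codimension one. (Induction on $m$: adding $\sigma_i$ along $\tau_i$ only changes the boundary status of $\tau_i$ itself, and every proper face $\rho \subsetneq \tau_i$ lies in some codim-one face of $\sigma_i$ other than $\tau_i$, so it stays on the boundary.) Since for $d\ge 4$ all interior faces have dimension at least $d-2 \ge 2$, there are no interior edges or vertices, giving $\Delta f_0 = 1$ and $\Delta f_1 = f_0(\partial T) - 0 = d+m-1$ directly, without invoking Kalai's $\phi_i$ formula. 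Either way the computation lands on $\Delta h_1 = 1$ and $\Delta h_2 = m$.
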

    
    The following idea is due to Walkup.  Indeed, our statement and proof are just the $(d-1)$-dimensional analogue of \cite[Lemma 7.3]{Wal}.    A simple $(d-1)$-tree in $\Delta$ is {\it spanning} if it contains all of the vertices of $\Delta.$ 
     \begin{prop}  \label{machine}
       Let $\Delta$ be a $2$-neighborly triangulation of a  homology manifold without boundary.  Suppose $\Delta$ contains a spanning $(d-1)$-tree $T$ such that every facet of $T$ contains a fixed set of distinct vertices $\{v_1,\dots,v_{d-3}\}.$ Equivalently, there is a codimension three face $\rho,$ and a spanning simple $2$-tree in the link of $\rho.$ Then for every pair $(a,b)$ satisfying $a \ge h_1(\Delta)$ and $g_2(\Delta) + a \le b \le \binom{a+1}{2}$ there exists a complex $\Delta^\prime$ which is homeomorphic to $\Delta$  with $h_1(\Delta^\prime) =a$ and $h_2(\Delta^\prime) = b.$
     \end{prop}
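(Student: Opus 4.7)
Set $k = a - h_1(\Delta)$ and $e = b - g_2(\Delta) - a$; the hypothesis on $(a,b)$ translates to $0 \le e \le \binom{a}{2} - \binom{h_1(\Delta)}{2} = kh_1(\Delta) + \binom{k}{2}$, using $g_2(\Delta) = \binom{h_1(\Delta)}{2}$ for a $2$-neighborly complex. The plan is to produce $\Delta'$ from $\Delta$ by first performing $k$ stellar subdivisions of facets of $T$ and then $e$ successive $1$-bistellar moves. A stellar subdivision is the central retriangulation of a simple $(d-1)$-tree of length one, so by Proposition~\ref{h of crt} it shifts $(h_1,h_2)$ by $(+1,+1)$; an $m$-bistellar move with $m=1$ shifts $(h_1,h_2)$ by $(0,+1)$ by the earlier proposition on bistellar moves. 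Both operations preserve homeomorphism type, so if the sequence can be executed the resulting $\Delta'$ will be homeomorphic to $\Delta$ with $h_1(\Delta')=a$ and $h_2(\Delta')=b$.

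Executing the first stage is routine. I would fix a natural ordering $\sigma_1,\dots,\sigma_M$ of the facets of $T$; by hypothesis each $\sigma_j = \rho\cup\tau_j$, where $\tau_1,\dots,\tau_M$ is a natural ordering of the spanning simple $2$-tree $T'$ in $\lk\rho$. Subdivide $\sigma_1,\dots,\sigma_k$ in order (the subdivision of $\sigma_j$ does not affect any other $\sigma_{j'}$), introducing new vertices $w_1,\dots,w_k$. If $k > M$, the resulting complex still contains $\rho$ as a codimension-$3$ face with a spanning simple $2$-tree in its link (the original $T'$ augmented by the new stars at $w_1,\dots,w_M$), so further subdivisions can be carried out in subsequent passes. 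Call the complex produced after $k$ subdivisions $\Delta^{(k)}$; it satisfies $h_1(\Delta^{(k)})=a$ and $h_2(\Delta^{(k)})=g_2(\Delta)+a$.

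The key local observation for the second stage, modelled on Walkup's argument in dimension three (cf.\ \cite[Lemma 7.3]{Wal}), is that immediately after a stellar subdivision of a facet $\sigma$ with new vertex $w$, for each $s \in \sigma$ the $(d-1)$-face $F = \sigma\setminus\{s\}$ is contained in exactly two facets of the current complex: the new facet $F\cup\{w\}$ and the cross-neighbor $F\cup\{v_s\}$ of $\sigma$ across $F$ in $\Delta$ (which exists and is unchanged provided the cross-neighbor is not itself subdivided). Since $w$ is new, $\{w,v_s\}$ is automatically a non-edge, and a $1$-bistellar move across $F$ inserts it. The facets produced by that move create fresh common $(d-1)$-faces; in particular, when two consecutive tree facets $\sigma_j,\sigma_{j+1}$ share the $(d-1)$-face $F=\sigma_j\cap\sigma_{j+1}$, after subdividing both we obtain $F$ shared by $F\cup\{w_j\}$ and $F\cup\{w_{j+1}\}$, so a $1$-bistellar move there inserts the non-edge $\{w_j,w_{j+1}\}$. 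Processing the new vertices in the order dictated by $T'$ and propagating the moves outward along $T'$ within $\lk\rho$, one builds a sequence of valid $1$-bistellar moves; terminating after exactly $e$ of them produces $\Delta'$.

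The main obstacle is sequencing the second-stage moves so that (i) at every step the $(d-1)$-face required for the next move still lies in exactly two facets and has not been destroyed by an earlier move, and (ii) the sequence extends all the way up to the maximum of $kh_1(\Delta)+\binom{k}{2}$ moves, the count corresponding to $\Delta^{(k)}$ becoming $2$-neighborly on $a+d$ vertices. The hypothesis that every facet of $T$ shares the codimension-$3$ face $\rho$ is precisely what makes this feasible: it confines the relevant combinatorics to the $2$-sphere $\lk\rho$, where the spanning simple $2$-tree $T'$ furnishes an explicit natural ordering along which the propagation of $1$-bistellar moves can be verified essentially as in Walkup's original $d=4$ argument. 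Once this bookkeeping is in place, the desired $\Delta'$ with $h_1(\Delta')=a$, $h_2(\Delta')=b$, and $\Delta'$ homeomorphic to $\Delta$ is obtained by stopping the sequence after $e$ moves.
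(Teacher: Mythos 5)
You take a genuinely different route from the paper. The paper builds a chain $\Delta=\Delta_0,\Delta_1,\dots$ of $2$-neighborly complexes by centrally retriangulating, at each stage, a spanning simple $(d-1)$-tree whose facets share a codimension-two face; this preserves both invariants ($2$-neighborliness and the existence of such a tree), and the target pair is then hit by one final central retriangulation of the first $j$ facets of $T_k$, whose effect on $(h_1,h_2)$ is given in a single step by Proposition \ref{h of crt}. No move-by-move bookkeeping is required. Your plan decomposes the same growth differently: $k$ stellar subdivisions up front, then $e$ successive $1$-bistellar moves. Since a central retriangulation of a length-$j$ simple $(d-1)$-tree can in fact be unpacked as one stellar subdivision followed by $j-1$ suitable $1$-bistellar moves propagated along the tree, your decomposition is morally a refactoring of the paper's step; but by doing \emph{all} the subdivisions first you deliberately give up $2$-neighborliness after stage $1$, and the entire burden of the proof shifts to scheduling the $e$ $1$-bistellar moves.

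That scheduling is exactly where the gap is, and you say so yourself but do not close it. The single-step observation is fine: right after subdividing $\sigma$ with new vertex $w$, each wall $F=\sigma\setminus\{s\}$ lies in precisely two facets $F\cup\{w\}$ and $F\cup\{v_s\}$, with $\{w,v_s\}$ a nonedge, so a $1$-bistellar move is legal; and if two subdivided facets of $T$ share a wall, the two new apices become joinable. But this only produces on the order of $kd$ legal moves immediately, whereas you need up to $k\,h_1(\Delta)+\binom{k}{2}$ of them, which for $h_1(\Delta)>d$ is strictly larger. After a handful of moves the local picture near $\lk\rho$ has been rearranged, and the claim that one can continue ``propagating outward along $T'$'' — i.e.\ that at every stage some $(d-2)$-face lies in exactly two facets whose opposite vertices form a nonedge involving a new apex — is asserted rather than proved, and it requires an inductive invariant that you never formulate. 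That invariant is the actual content of the proposition. The paper avoids the whole problem by never working one edge at a time: the variable-length central retriangulation absorbs the bookkeeping, and the only thing to verify is the effect on $(h_1,h_2)$, which is Proposition \ref{h of crt}. (You are right that your counting would, if the scheduling worked, cover the full stated range; the paper's last sentence is itself a bit terse on this point, since for $a\ge h_1(\Delta)+2$ the family $\Delta_{k,j}$ with $h_1=a$ directly realizes only $g_2\in\left[\binom{a-1}{2},\binom{a}{2}\right]$ and one has to allow partial retriangulations at intermediate stages — but the central-retriangulation framework delivers this once the invariants are stated, whereas your edge-at-a-time approach has no invariant to lean on.)
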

     
     \begin{proof}
     Let $\Delta = \Delta_0$ and let $\Delta_1$ be the complex obtained by a central retriangulation of $T$  and let $w_1$ be the new vertex in $\Delta_1.$ As $\Delta$ was neighborly and $T$ is spanning,  $\Delta^\prime$ is neighborly.  For each $i > d-3, \{v_1, \dots, v_{d-3}, v_i\}$ is a face of the boundary of $T.$  Hence $\{w_1,v_1, \dots, v_{d-3}, v_i\}$ is a face of $\Delta_1,$    so the link of $\{w_1,v_1,\dots,v_{d-3}\}$ contains all of the other vertices of $\Delta_1.$  Since $\Delta_1$ is a closed homology manifold, this link must be a circle.  This implies that $\Delta_1$ has a spanning simple $(d-1)$-tree $T_1$ and distinct vertices $\{v^\prime_1,\dots, v^\prime_{d-2}\}$ which are contained in every facet of $T_1.$  Repeating this process we obtain an infinite family of complexes $\Delta_k.$ For each $k,$ $\Delta_k$ is homeomorphic to $\Delta,$ 2-neighborly, and   $h_1(\Delta_k) = h_1(\Delta) + k.$    In addition, each $\Delta_k$ has a spanning simple $(d-1)$-tree $T_k$.   Given $j, 1 \le j \le h_1(\Delta)+k+1$, define $\Delta_{k,j}$ to be the complex obtained from $\Delta_k$ by a central retriangulation of the simple $(d-1)$-tree consisting of the the first $j$ facets     of $T_k.$  By Proposition \ref{h of crt} the collection of pairs $(h_1(\Delta_{k,j}), h_2(\Delta_{k,j})),$ is exactly the pairs $(a,b)$ guaranteed by the theorem.
          
     \end{proof}

\begin{thm} \label{cp2}
   The following are equivalent.
   \begin{enumerate}
     \item[i.]$(1,g_1,g_2)$ is the $g$-vector of a triangulation of $\C P^2.$
     
     \item[ii.]
      $(1,g_1,g_2)$ is the $g$-vector of a triangulation of a closed four-dimensional homology manifold with $\beta_1=0$ and  $\beta_2 = 1.$
     \item[iii.]
       $6 \le g_2 \le \binom{g_1+1}{2}.$
    \end{enumerate}
 \end{thm}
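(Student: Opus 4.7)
The implication (i) $\Rightarrow$ (ii) is immediate, since $\C P^2$ is a closed four-dimensional manifold with $\beta_1 = 0$, $\beta_2 = 1$.

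For (ii) $\Rightarrow$ (iii), the upper bound $g_2 \le \binom{g_1+1}{2}$ is just the standard pure-complex bound $f_1 \le \binom{f_0}{2}$. The substance is the lower bound $g_2 \ge 6$, for which I would apply Theorem \ref{even euler inequality}(b) with $m = 2$: orientability of $\Delta$ combined with $\beta_1 = 0$ and $\beta_2 = 1$ gives $G(\Delta) = \binom{5}{2}(\beta_2 - \beta_1) = 10$ and $\binom{4}{2}\beta_{m-1} = 0$. Writing $g_2 = \binom{a}{2} + \binom{b}{1}$ with $a > b \ge 0$, Theorem \ref{even euler inequality}(b) asserts
\[
  10 \le \binom{a+1}{3} + \binom{b+1}{2}.
\]
A direct case check shows that every $g_2 \le 5$ violates this inequality; for instance $g_2 = 5 = \binom{3}{2} + \binom{2}{1}$ gives right-hand side $\binom{4}{3} + \binom{3}{2} = 7$, whereas $g_2 = 6 = \binom{4}{2}$ (with $b = 0$) produces right-hand side $\binom{5}{3} = 10$ and is feasible.

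For (iii) $\Rightarrow$ (i), the plan is to start from K\"uhnel's $9$-vertex minimum triangulation $\Delta_9$ of $\C P^2$ and apply Proposition \ref{machine}. Using $f_0 = 9$ and $f_1 = \binom{9}{2} = 36$ (that is, $2$-neighborliness) together with the generalized Dehn--Sommerville equations, one computes $h_1(\Delta_9) = 4$, $h_2(\Delta_9) = 10$, $f_2(\Delta_9) = 84$, so $(g_1, g_2)(\Delta_9) = (3, 6)$. For $g_1 \le 2$ the bound $g_2 \le \binom{g_1+1}{2} \le 3$ contradicts $g_2 \ge 6$, and the only admissible pair for $g_1 = 3$ is $(3, 6)$, realized by $\Delta_9$ itself. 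For $g_1 \ge 4$ I would verify the hypotheses of Proposition \ref{machine} for $\Delta_9$: it is $2$-neighborly, and since each edge lies in $3 f_2 / f_1 = 7$ triangles (which must be the common value because the link of any edge is a $2$-sphere on at most $n - 2 = 7$ remaining vertices, while the average is already $7$), every edge link is a $2$-sphere containing all other vertices of $\Delta_9$. Exhibiting a spanning simple $2$-tree inside one such edge link, Proposition \ref{machine} then produces complexes homeomorphic to $\C P^2$ realizing every pair $(h_1, h_2) = (a, b)$ with $a \ge 4$ and $6 + a \le b \le \binom{a+1}{2}$, which is precisely the set of $(g_1, g_2)$ with $g_1 \ge 3$ and $6 \le g_2 \le \binom{g_1+1}{2}$.

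The main obstacle is the last combinatorial step: exhibiting a spanning simple $2$-tree in some edge link of $\Delta_9$. Such a link is a triangulated $2$-sphere with $V = 7$, $E = 15$, $F = 10$, and the tree sought consists of $5$ triangles $\sigma_1, \dots, \sigma_5$, each attached to its predecessors along a single boundary edge, together covering all $7$ vertices. This is a finite combinatorial verification which takes advantage of the large symmetry group of $\Delta_9$ (so that only a single edge link need be examined) and of the shelling-type structure available on any polytopal $2$-sphere.
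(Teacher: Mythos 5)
Your proposal follows the paper's argument exactly: (i)$\Rightarrow$(ii) is immediate; (ii)$\Rightarrow$(iii) uses the trivial upper bound together with Theorem \ref{even euler inequality}(b) (your numerical check that $g_2 = 5$ gives right-hand side $7 < 10$ while $g_2 = 6$ gives exactly $10$ is correct, as is $G(\Delta) = 10$, $\beta_{m-1}=0$); and (iii)$\Rightarrow$(i) reduces to Proposition \ref{machine} applied to K\"uhnel's $9$-vertex triangulation with $g$-vector $(1,3,6)$, since the ranges translate precisely as you say. Your derivation that every edge link of $\Delta_9$ is a $2$-sphere on all $7$ remaining vertices (because the average $3f_2/f_1 = 7$ already meets the ceiling $n-2=7$) is a nice observation that is implicit in the paper.

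The only thing you leave hanging is the explicit exhibition of a spanning simple $2$-tree in some edge link, which you correctly flag as a finite combinatorial verification but do not carry out. The paper does this concretely: it records the triangulation in Table \ref{CP2} and lists the tree $[3,4,7],[3,4,5],[4,5,6],[5,6,8],[6,8,9]$ in the link of the edge $\{1,2\}$ in Table \ref{cp2 tree}, a $5$-facet simple $2$-tree on the $7$ vertices $\{3,\ldots,9\}$. Note also that appealing to ``shelling-type structure'' is not quite what is needed: a shelling order may attach a new triangle along \emph{two} boundary edges, while a simple $2$-tree demands attachment along exactly one edge lying on the current boundary, so the existence of a spanning simple $2$-tree is genuinely an extra (though easy, and in this case explicitly verifiable) claim rather than an immediate consequence of shellability of $2$-spheres.
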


\begin{proof}
 As before, i $\to$ ii is trivial and  any complex satisfies $ g_2 \le \binom{g_1+1}{2}.$  Theorem \ref{even euler inequality} implies $6 \le g_2,$ and hence ii $\to$ iii.

For iii $\to$ i, the previous proposition  shows that it is sufficient to find a triangulation of $\C P^2$ with $g$-vector $(1,3,6)$ and simple $4$-tree which satisfies the conditions of the previous proposition.  Table \ref{CP2} contains such a triangulation, originally due to K\"{u}hnel \cite{KL2}, and Table \ref{cp2 tree} shows an appropriate simple $2$-tree in the link of an edge.

\end{proof}

\begin{table}
$$\begin{array}{llllc}
[1,2,3,4,5], & [1,2,3,4,7], & [1,2,3,5,8], & [1,2,3,7,8], \\
\left[1,2,4,5,6 \right], & [1,2,4,6,7], & [1,2,5,6,8], & [1,2,6,7,9],  \\
 \left[1,2,6,8,9 \right]  & [1,2,7,8,9] &[1,3,4,5,9],  & [1,3,4,7,8], \\ 
\left[1,3,4,8,9 \right], & [1,3,5,6,8], & [1,3,5,6,9], & [1,3,6,8,9],  \\
\left[1,4,5,6,7 \right], & [1,4,5,7,9], & [1,4,7,8,9], & [1,5,6,7,9], \\ \left[2,3,4,5,9 \right], & [2,3,4,6,7], & [2,3,4,6,9], & [2,3,5,7,8], \\ 
\left[2,3,5,7,9 \right], & [2,3,6,7,9], & [2,4,5,6,8], & [2,4,5,8,9], \\ \left[2,4,6,8,9\right], & [2,5,7,8,9], & [3,4,6,7,8], &[3,4,6,8,9], \\ \left[3,5,6,7,8 \right], & [3,5,6,7,9], & [4,5,6,7,8], & [4,5,7,8,9]
\end{array}$$
\caption{$\Delta(\C P^2)$ - A triangulation of $\C P^2$ with $h_1 = 4, h_2 = 10$ \cite{KL2}} \label{CP2}
\end{table}

\begin{table}
$$[3,4,7],[3,4,5],[4,5,6],[5,6,8],[6,8,9]$$
\caption{A spanning simple $2$-tree in the link of $[1,2]$ in $\Delta(\C P^2).$} \label{cp2 tree} 
\end{table}

 A similar technique can be used to characterize $g$-vectors of triangulations of $K3$ surfaces.  Any nonsingular quartic in $\C P^3$ is a {\it K3 surface.}  While different quartics can lead to distinct complex structures, all K3 surfaces are homeomorphic. In fact they are diffeomorphic, see, for instance \cite[Theorem 3.4.9]{GS}.   As a closed four manifold, every K3 surface is simply connected with second Betti number equal to twenty-two.  Hence any triangulation satisfies $h_3 - h_2 = 220 = \binom{12}{3}.$

\begin{thm}
The following are equivalent.
   \begin{enumerate}
    \item[i.]$(1,g_1,g_2)$ is the $g$-vector of a triangulation of  a K3 surface.
     
     \item[ii.]
      $(1,g_1,g_2)$ is the $g$-vector of a triangulation of a closed four dimensional homology manifold with $\beta_1 = 0, \beta_2 = 22.$
     \item[iii.]
       $ 55 \le g_2 \le \binom{g_1+1}{2}.$
    \end{enumerate}
\end{thm}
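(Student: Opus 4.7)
The plan mirrors the three-step structure of Theorem \ref{cp2}, with numerics adjusted for the K3 Betti numbers $(1,0,22,0,1)$. The implication (i) $\Rightarrow$ (ii) is immediate, since every K3 surface is a simply connected closed four-manifold with $\beta_2 = 22$.

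For (ii) $\Rightarrow$ (iii), the upper bound $g_2 \le \binom{g_1+1}{2}$ follows from the rigidity inequality together with Macaulay's theorem (applied to the Hilbert function of $k[\Delta]/\langle \Theta, \omega\rangle$ for generic $\omega$, using Theorem \ref{injectivity}). For the lower bound, I apply Theorem \ref{even euler inequality}(b) with $m = 2$: orientability and $\beta_1 = 0$ give $G(\Delta) = \binom{5}{2}(\beta_2 - \beta_1) = 220$ and $\binom{2m}{m}\beta_{m-1} = 0$. Writing the canonical expansion $g_2 = \binom{a}{2} + \binom{b}{1}$ with $a > b \ge 0$, the inequality reads
\[
\binom{a+1}{3} + \binom{b+1}{2} \ge 220 = \binom{12}{3},
\]
and strict monotonicity of the Macaulay pseudo-power forces $g_2 \ge 55$, the minimum being attained uniquely at $a = 11, b = 0$.

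For (iii) $\Rightarrow$ (i), take as base triangulation $\Delta$ the Casella--K\"uhnel 16-vertex, 3-neighborly triangulation of K3. It is 2-neighborly with $h_1(\Delta) = 11$ and $g_2(\Delta) = 55$, sitting at the minimal corner of the region defined by (iii). Once an edge $\rho$ of $\Delta$ is exhibited whose link --- a triangulated 2-sphere on all 14 remaining vertices, with 24 triangles --- carries a spanning simple 2-tree, Proposition \ref{machine} produces K3 triangulations realizing every pair $(h_1(\Delta^\prime), h_2(\Delta^\prime)) = (a, b)$ with $a \ge 11$ and $55 + a \le b \le \binom{a+1}{2}$, equivalently every $(g_1, g_2)$ satisfying (iii).

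The principal obstacle is the explicit production of such a spanning simple 2-tree: twelve of the twenty-four triangles of the edge link, ordered so that each successive triangle meets the union of its predecessors in a single boundary edge and contributes exactly one new vertex. This is a concrete combinatorial task, analogous to but larger than the five-triangle tree in Table \ref{cp2 tree}. I plan to exploit the sizeable automorphism group of the Casella--K\"uhnel triangulation to reduce the problem to a single orbit of edges, find a Hamilton path through the 14 vertices of the corresponding link in its 1-skeleton, and then certify the induced twelve-triangle sequence by direct inspection.
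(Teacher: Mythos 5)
Your argument tracks the paper's proof step for step: (i) $\Rightarrow$ (ii) is immediate from the topology of a K3 surface; (ii) $\Rightarrow$ (iii) is the trivial bound $g_2\le\binom{g_1+1}{2}$ together with Theorem \ref{even euler inequality}(b), whose numerics you have unpacked correctly (orientability and $\beta_1=0$ give $G(\Delta)=\binom{5}{2}\cdot 22=220=\binom{12}{3}$, so monotonicity of the Macaulay pseudo-power forces $g_2\ge 55$); and (iii) $\Rightarrow$ (i) combines the Casella--K\"uhnel $16$-vertex triangulation, which has $h_1=11$ and $g_2=55$, with Proposition \ref{machine}, once a spanning simple $2$-tree in an edge link is in hand.

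The one genuine gap is that this last combinatorial step is left as a plan rather than carried out. Proposition \ref{machine} cannot be applied until an explicit ordered sequence of twelve triangles of the $14$-vertex $2$-sphere link is exhibited and verified to be a spanning simple $2$-tree. The paper supplies exactly this datum by pointing to Figure \ref{k3 link}, reproduced from \cite[Figure 1]{CK}, and reading off the ordering of the facets. Your Hamilton-path heuristic is also not quite the right object to search for: a spanning simple $2$-tree requires each successive triangle to meet the union of its predecessors in precisely one boundary edge and to contribute precisely one new vertex, and a Hamilton path in the $1$-skeleton of the link neither yields such a triangle sequence automatically nor is needed for one to exist. So the step you defer as ``certify the induced twelve-triangle sequence by direct inspection'' is in fact the entire content of the verification, and until it is done the implication (iii) $\Rightarrow$ (i) remains incomplete.
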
   
     
\begin{proof}
  The proof of i $\to$ ii and ii $\to$ iii follows the same reasoning as in Theorem \ref{cp2}.  In addition, as in that proof, iii $\to$ i will be established with the existence of a triangulation $\Delta_{K3}$ of a K3 surface with $g$-vector $(1,10,55)$ and an appropriate simple $4$-tree.  Such a triangulation was given by Casella and K\"{u}hnel \cite{CK}.  We refer the reader to this reference for  the triangulation and a verification of its $g$-vector.  Figure \ref{k3 link},  which is \cite[Figure 1]{CK},  shows the link of an edge in this triangulation and contains a spanning simple $2$-tree in this link.  The labeled vertices on the boundary of the hexagons are identified and the numbering of the triangles is an ordering for the facets of the tree.  Coning this tree with the edge provides the desired $4$-tree.
\end{proof} 
    
\begin{figure} 
 \scalebox{0.50}[0.50]{\includegraphics{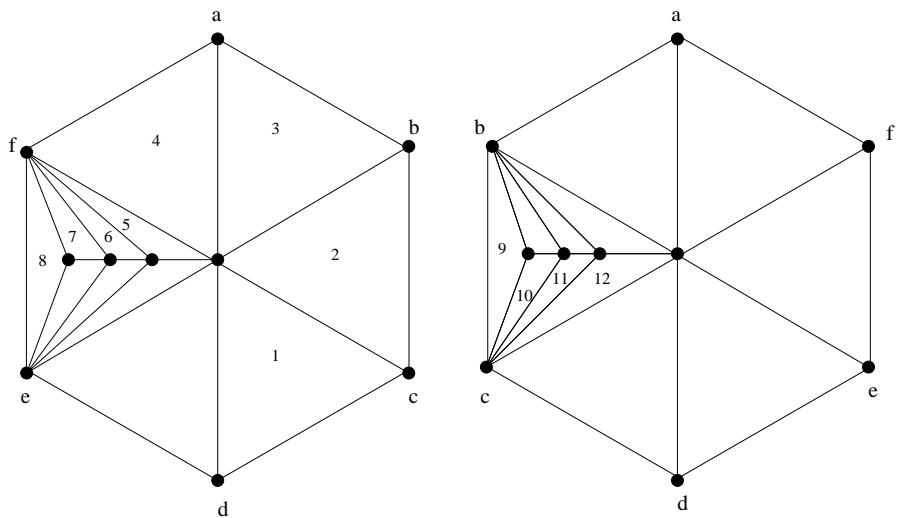}}
  \caption{Spanning $2$-tree in the link of an edge of $\Delta_{K3}$} \label{k3 link}
\end{figure}

\begin{thm}
The following are equivalent.
   \begin{enumerate}
     \item[i.]
     $(1,g_1,g_2)$ is the $g$-vector of a triangulation of the connected sum of $S^2 \times S^2$ with itself.
         \item[ii.]
      $(1,g_1,g_2)$ is the $g$-vector of a triangulation of a closed four-dimensional homology manifold with $\beta_1=0$ and $\beta_2=4.$

           \item[iii.]
       $  18 \le g_2 \le \binom{g_1+1}{2}.$
  \end{enumerate}
\end{thm}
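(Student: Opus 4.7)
The plan is to follow the pattern of Theorem \ref{cp2} and its analogue for the $K3$ surface. The implication i $\Rightarrow$ ii is immediate: $(S^2 \times S^2) \# (S^2 \times S^2)$ is a simply-connected closed $4$-manifold, so $\beta_1 = 0$, and its rational second Betti number is $\beta_2(S^2 \times S^2) + \beta_2(S^2 \times S^2) = 4$.

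For ii $\Rightarrow$ iii, the inequality $g_2 \le \binom{g_1+1}{2}$ holds for every simplicial complex (Macaulay's theorem, Theorem \ref{Macaulay}, applied to $k[\Delta]/\langle \Theta \rangle$). For the lower bound $g_2 \ge 18$, I would apply Theorem \ref{even euler inequality} with $m = 2$, $\beta_1 = 0$, $\beta_2 = 4$. Then $G(\Delta) = \binom{5}{2}(\beta_2 - \beta_1) = 40$, and the relevant inequality reads
\[
40 \;\le\; \binom{a+1}{3} + \binom{b+1}{2},
\]
where $g_2 = \binom{a}{2}+b$ is the Macaulay decomposition with $a > b \ge 0$. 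For $g_2 = 17 = \binom{6}{2}+2$ the right-hand side is $35 + 3 = 38 < 40$, ruling this value out; for $g_2 = 18 = \binom{6}{2}+3$ it is $35 + 6 = 41 \ge 40$, consistent. Smaller values of $g_2$ are excluded the same way.

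The substantive part is iii $\Rightarrow$ i. As in the $\C P^2$ and $K3$ proofs, I would produce an explicit triangulation $\Delta_0$ of $(S^2\times S^2)\#(S^2\times S^2)$ realizing the minimal $g$-vector $(1,6,18)$ (so $n = 12$, $h_1 = 7$, $h_2 = 25$), and exhibit an edge $\rho$ of $\Delta_0$ whose link contains a spanning simple $2$-tree. A natural source is the connected sum, along a facet, of two small triangulations of $S^2 \times S^2$, followed by inverse bistellar moves chosen to drive $g_2$ down to the extremal value. Given $\Delta_0$ together with the distinguished tree, the conclusion follows from iterated central retriangulations combined with $0$- and $1$-bistellar moves, exactly in the manner of Proposition \ref{machine}.

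The main obstacle is that Proposition \ref{machine} as literally stated requires the base triangulation to be $2$-neighborly, whereas a $2$-neighborly $4$-manifold on $12$ vertices automatically has $g_2 = 21$ rather than $18$. I would handle this in one of two ways: (a) relax the hypothesis of Proposition \ref{machine} by observing that $2$-neighborliness is used only to guarantee the existence of spanning simple $2$-trees in the successive links, a property that can be verified directly from the explicit $\Delta_0$; or (b) supplement the $2$-neighborly base at $g_2 = 21$ (which together with Proposition \ref{machine} covers all pairs with $g_2 \ge g_1+14$) by exhibiting explicit triangulations at the three exceptional values $g_2 \in \{18,19,20\}$ for the smallest permissible $g_1$, and then propagating upward in $g_1$ by $0$-bistellar moves (which preserve $g_2$) and in $g_2$ by $1$-bistellar moves (which fix $g_1$ and add $1$ to $g_2$). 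In either approach, the combinatorial heart of the argument is writing down $\Delta_0$ and verifying its spanning simple $2$-tree, exactly analogous to what Figure \ref{k3 link} accomplishes for $K3$.
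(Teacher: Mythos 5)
Your proposal is correct and option (b) is exactly what the paper does: it takes Lutz's twelve-vertex triangulation of $(S^2\times S^2)\#(S^2\times S^2)$ with $g$-vector $(1,6,18)$ and exactly three non-edges, handles $g_2\in\{18,19,20\}$ by performing $0$, $1$, or $2$ of a fixed list of three $1$-bistellar moves followed by facet subdivisions to increase $g_1$, and for $g_2\ge 21$ applies Proposition \ref{machine} to the $2$-neighborly complex obtained after all three moves, using a spanning simple $2$-tree in the link of the edge $\{1,2\}$. One small correction: Proposition \ref{machine} applied to the $(1,6,21)$ base covers the admissible pairs with $g_2\ge 21$ (not ``$g_2\ge g_1+14$''), which together with the sporadic values $18,19,20$ propagated upward in $g_1$ fills the whole range $18\le g_2\le\binom{g_1+1}{2}$.
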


\begin{proof}
Using the same logic as the previous two theorems, $g_2 \ge 18.$    Table \ref{s2xs2} shows  a triangulation with $g$-vector $(1,6,18)$.  This triangulation comes from \cite{Lu2}. Every pair of vertices in this triangulation span an edge except for $\{1,5\}, \{5,6\}$ and $\{1,6\}.$  For $g_2$ equal to 18,19 or 20, first perform 0,1 or 2 $1$-bistellar moves from Table \ref{3 moves}.  Then subdivide facets until the desired number of vertices are obtained.  As before, for $g_2 \ge 21$ Proposition \ref{machine} tells us that is it sufficient to find a spanning simple $2$-tree in the link of some edge of a $2$-neighborly triangulation of   $(S^2 \times S^2) \# (S^2 \times S^2)$ with $12$ vertices.  After the three $1$-bistellar moves in Table \ref{3 moves}, the triangulation in Table \ref{s2xs2} is $2$-neighborly.  Table \ref{s2xs2 tree} lists such a tree in the link of $[1,2].$

\end{proof}

\begin{table}
$$\begin{array}{llll}
\left[1,2,3,4,7\right], & [1,2,3,4,10], & [1,2,3,7,10], &[1,2,4,7,8], \\ \left[1,2,4,8,11\right], & [1,2,4,9,10], & [1,2,4,9,12], & [1,2,4,11,12], \\ 
\left[1,2,7,8,10\right], & [1,2,8,9,10], & [1,2,8,9,12], & [1,2,8,11,12], \\
\left[1,3,4,7,11 \right], & [1,3,4,10,11], & [1,3,7,8,11], & [1,3,7,8,12], \\ \left[1,3,7,9,10\right], & [1,3,7,9,12], &[1,3,8,11,12], & [1,3,9,10,12], \\ 
\left[1,3,10,11,12\right], & [1,4,7,8,11], & [1,4,9,10,11], & [1,4,9,11,12],\\
\left[1,7,8,9,10\right], & [1,7,8,9,12], & [1,9,10,11,12], & [2,3,4,6,9], \\
\left[2,3,4,6,10\right], & [2,3,4,7,12], &[2,3,4,9,12], & [2,3,5,7,9], \\ 
\left[2,3,5,7,10\right], & [2,3,5,8,10], & [2,3,5,8,11], & [2,3,5,9,11], \\
\left[2,3,6,9,11\right], & [2,3,6,10,11], & [2,3,7,9,12], &[2,3,8,10,11], \\ 
\left[2,4,5,7,8\right], & [2,4,5,7,12], &[2,4,5,8,11], & [2,4,5,11,12], \\ 
\left[2,4,6,9,10\right], & [2,5,7,8,10], & [2,5,7,9,11], & [2,5,7,11,12], \\
\left[2,6,7,9,11\right], & [2,6,7,9,12], & [2,6,7,11,12], &[2,6,8,9,10], \\ 
\left[2,6,8,9,12\right], & [2,6,8,10,12], &[2,6,10,11,12], &[2,8,10,11,12],\\ 
\left[3,4,5,8,9\right], & [3,4,5,8,12], & [3,4,5,9,12], & [3,4,6,7,11], \\
\left[3,4,6,7,12\right], & [3,4,6,8,9], & [3,4,6,8,12], &[3,4,6,10,11], \\ 
\left[3,5,7,9,10\right],& [3,5,8,9,11], &[3,5,8,10,12], &[3,5,9,10,12], \\ 
\left[3,6,7,8,11\right], & [3,6,7,8,12], & [3,6,8,9,11], & [3,8,10,11,12], \\
\left[4,5,7,8,10\right], & [4,5,7,10,12], & [4,5,8,9,11], &[4,5,8,10,12],  \\ 
\left[4,5,9,11,12\right], & [4,6,7,10,11], &[4,6,7,10,12], &[4,6,8,9,10], \\ 
\left[4,6,8,10,12\right], & [4,7,8,9,10], & [4,7,8,9,11], & [4,7,9,10,11],\\
\left[5,7,9,10,11\right],  & [5,7,10,11,12], &[5,9,10,11,12], &[6,7,8,9,11],\\ \left[6,7,8,9,12\right],& [6,7,10,11,12] & &
  \end{array}
  $$
\caption{$\Delta((S^2 \times S^2) \# (S^2 \times S^2))$ - A triangulation of $(S^2 \times S^2) \# (S^2 \times S^2)$ with $h_1 = 7$ and $g_2 = 18$ \cite{Lu2}.}  \label{s2xs2}
\end{table}

\begin{table}
$$([1,2,3,7,10],[2,3,5,7,10])$$
$$([2,3,5,9,11],[2,3,6,9,11])$$
$$([1,2,4,9,10],[2,4,6,9,10])$$

\caption{Three 1-bistellar moves on $\Delta((S^2 \times S^2) \# (S^2 \times S^2))$}  \label{3 moves}
\end{table}

\begin{table}
$$ \{[3,5,10], [5,7,10], [7,8,10], [8,9,10], [8,9,12], [4,9,12], [4,6,9], [4,11,12]\}$$
\caption{A spanning simple 2-tree in the link of $\{1,2\}$ in $\Delta((S^2 \times S^2) \# (S^2 \times S^2))$ after three 1-bistellar moves.} \label{s2xs2 tree}
\end{table}

While the methods we have introduced are not sufficient to completely characterize the $h$-vectors of higher dimensional spaces, many partial results are possible.  For instance, consider $S^3 \times S^3.$  

\begin{prop}  
  The component-wise minimum $h$-vector for triangulations of $S^3 \times S^3$ is $(1,6,21,56,126,-21,20,-1).$   There exists a triangulation $\Delta$ of $S^3 \times S^3$ with $h_1(\Delta) = a$ and $h_2(\Delta) = b$ if and only if $ 15 \le b-a \le \binom{a}{2}.$

\end{prop}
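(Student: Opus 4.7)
The plan is to first characterize the realizable pairs $(h_1,h_2)=(a,b)$, and then to deduce the componentwise minimum $h$-vector as a consequence. Since $S^3\times S^3$ is a closed orientable $6$-manifold with $\beta_3=2$ and all other middle Betti numbers zero, we are in the setting $d=7$, $m=3$, and the invariant of Theorem~\ref{even euler inequality} is $G(\Delta)=\binom{7}{3}(\beta_3-\beta_2)=70$.

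For necessity of the pair bounds, the upper estimate $b-a\le\binom{a}{2}$ is just the Macaulay inequality $h_2\le\binom{h_1+1}{2}$ valid for any simplicial complex. For the lower estimate I would invoke Theorem~\ref{even euler inequality}(b) with $\beta_{m-1}=\beta_2=0$: writing $b-a=\binom{p}{2}+q$ in Macaulay form with $p>q$, one needs $70\le\binom{p+2}{4}+\binom{q+2}{3}$. This fails at $b-a=14$ (where $(p,q)=(5,4)$ gives $35+20=55$) and holds with equality at $b-a=15$ (where $(p,q)=(6,0)$ gives $70+0=70$), so $b-a\ge 15$.

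For sufficiency I need a base triangulation $\Delta_0$ of $S^3\times S^3$ with $(h_1,h_2)=(6,21)$, which is necessarily a $13$-vertex $2$-neighborly triangulation. I would exhibit (or cite from the K\"uhnel--Lassmann catalog) such a minimal triangulation together with a codimension-$3$ face $\rho$ such that every other vertex lies in $\lk\rho$ and $\lk\rho$ carries a spanning simple $2$-tree on those $9$ vertices, exactly analogously to Table~\ref{cp2 tree} in the $\C P^2$ argument. With such $\Delta_0$ in hand, Proposition~\ref{machine} produces triangulations for every pair $(a,b)$ with $a\ge 6$ and $15+a\le b\le\binom{a+1}{2}$; since $b\le\binom{a+1}{2}$ is the same as $b-a\le\binom{a}{2}$, and since the two-sided constraint $15\le b-a\le\binom{a}{2}$ automatically forces $a\ge 6$, this matches the asserted region exactly.

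The componentwise minimum $h$-vector is then a consequence. The minima of $h_1,h_2$ are $6$ and $21$ by the pair characterization. For $h_3\ge 56$, I would use that $\beta_1=0$ gives $h'_3=h_3$ by Schenzel's formula and $h'_4=h_4=h_3+70$ by the generalized Dehn--Sommerville equations, so Macaulay's inequality $(h'_3)^{\langle 3\rangle}\ge h'_4$ becomes $h_3^{\langle 3\rangle}-h_3\ge 70$; an arithmetic check (at $h_3=55$ one gets $120-55=65<70$, at $h_3=56=\binom{8}{3}$ one gets $126-56=70$) forces $h_3\ge 56$. The remaining entries $h_4=h_3+70\ge 126$, $h_5=h_2-42\ge-21$, $h_6=h_1+14\ge 20$, and $h_7=-1$ are determined by the generalized Dehn--Sommerville equations, and all minima are realized simultaneously by $\Delta_0$ itself, since $(h_1,h_2)=(6,21)$ forces $h_3=56$ by the matching Macaulay upper bound $h_3\le h_2^{\langle 2\rangle}=\binom{8}{3}=56$. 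The main obstacle throughout is the explicit construction and verification of $\Delta_0$ with the required codimension-$3$ face and spanning $2$-tree structure in its link; the Macaulay and Dehn--Sommerville bookkeeping is then routine.
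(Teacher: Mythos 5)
Your proposal is correct and follows essentially the same route as the paper: Schenzel's formula plus the generalized Dehn--Sommerville relation $h'_4 = h'_3 + 70$ and Macaulay's bound cascade downward to give the component-wise minima $h_1 \ge 6, h_2 \ge 21, h_3 \ge 56, h_4 \ge 126$, and Proposition \ref{machine} applied to a minimal $13$-vertex triangulation (the paper cites one from Lutz, with the codimension-$3$ face $[1,2,3,4]$ carrying the required spanning simple $2$-tree) yields the full sufficiency region. Where you go beyond the paper's rather terse exposition is in explicitly invoking Theorem \ref{even euler inequality}(b) to derive the necessity of $h_2 - h_1 \ge 15$: the coordinate-wise bounds $h_1 \ge 6,\ h_2 \ge 21$ alone do \emph{not} force a lower bound on $h_2 - h_1$ (e.g.\ $(h'_1,h'_2,h'_3,h'_4)=(100,110,56,126)$ satisfies all of them), so one genuinely needs the $g'$-vector Macaulay argument with $G(\Delta)=70$, exactly as you carry out. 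Your arithmetic check (failure at $14$ via $\binom{7}{4}+\binom{6}{3}=55$, equality at $15$ via $\binom{8}{4}=70$) is correct, as is the observation that $(h_1,h_2)=(6,21)$ forces $h_3=56$ by the matching Macaulay upper bound, so the base triangulation realizes the entire minimum vector simultaneously. In short: same decomposition as the paper, with the implicit step made explicit.
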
 
 
\begin{proof}
Let $\Delta$ be a triangulation of $S^3 \times S^3.$ By Schenzel's formula (see Theorem \ref{schen}) $h_i = h^\prime_i$ for $i=1,2,3$ or $4.$ The generalized Dehn-Sommerville equations imply that $h^\prime_4 - h^\prime_3 = 70 = \binom{8}{4}.$  Hence, $h^\prime_4 = h_4 \ge \binom{9}{4}, h^\prime_3 = h_3 \ge \binom{8}{3}, h^\prime_2 = h_2 \ge \binom{7}{2}$ and $h^\prime_1 = h_1 \ge \binom{6}{1}.$  In \cite[Section 7]{Lu}, Lutz gives a triangulation of $S^3 \times S^3$ with this $h$-vector.  In addition, the link of the face denoted by $[1,2,3,4]$ in that triangulation has a spanning simple $2$-tree, so Proposition \ref{machine} applies.

\end{proof}

In \cite{Wal} Walkup proved that for any closed three manifold $M$ there exists $\gamma(M)$ such that for any pair $(a,b)$ with $\gamma(M) \le b \le \binom{a+1}{2},$ there exists a triangulation $\Delta$ of $M$ with $g_1(\Delta) = a$ and $g_2(\Delta) = b.$  In fact, this is true for all closed homology manifolds of dimension at least three which can be triangulated.  We prove this in a series of lemmas which are an adaptation of Walkup's proof to higher dimensions.  

\begin{lem}  \label{lem 1}
  Let $M$ be a connected homology manifold without boundary of dimension $d-1$.  If $M$ has a triangulation, then $M$ has a triangulation which contains a spanning simple $(d-1)$-tree.
\end{lem}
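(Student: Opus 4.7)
The plan is to start from an arbitrary triangulation $\Delta_0$ of $M$ and iteratively modify it by stellar subdivisions of facets ($0$-bistellar moves) while simultaneously growing a simple $(d-1)$-tree $T$, until $T$ spans the vertex set of the resulting complex. Each $0$-bistellar move preserves homeomorphism type, so the final complex is still a triangulation of $M$, and the constructed $T$ is a spanning simple $(d-1)$-tree inside it.

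First, initialize $T$ with any facet of $\Delta_0$ and apply the greedy extension rule: whenever there is a facet $\sigma \notin T$ such that $\sigma \cap |T|$ is a single codimension-one face on $\partial |T|$ and $\sigma$ contains a vertex outside $V(T)$, add $\sigma$ to $T$. If this process terminates with $V(T)$ equal to the full vertex set of the current triangulation, we are done.

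Otherwise, since $M$ is connected and $|T|$ is a proper subcomplex, pick any facet $\sigma^\ast \notin T$ sharing at least one codimension-one face with $T$. Because the greedy rule failed at $\sigma^\ast$, all vertices of $\sigma^\ast$ must already lie in $V(T)$: if $\sigma^\ast \cap |T|$ contained two distinct codimension-one faces of $\sigma^\ast$, their union would exhaust $\sigma^\ast$, so in either failure mode the vertex set of $\sigma^\ast$ sits inside $V(T)$. Now perform a $0$-bistellar move on $\sigma^\ast$: introduce a new vertex $w$ and replace $\sigma^\ast$ by the $d$ cone facets $\partial \sigma^\ast \ast \{w\}$. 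Exactly one of these cone facets meets the old $T$ precisely in the codimension-one face originally shared between $\sigma^\ast$ and $T$, and it contains the new vertex $w$; add this cone facet to $T$. The tree $T$ has been extended by one facet and one new vertex, and its boundary $\partial |T|$ has grown into territory previously beyond $\sigma^\ast$.

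Iterate greedy extension and further subdivisions as needed. To control termination, at each stuck step choose $\sigma^\ast$ to lie on a shortest facet-path from $T$ to some vertex of $\Delta_0$ not yet in $V(T)$; then the integer-valued potential $\sum_{v \in V(\Delta_0) \setminus V(T)} d(T,v)$, where $d(T,v)$ is the number of facets in a shortest facet-path from $T$ to $v$ in the current triangulation, strictly decreases, so the procedure halts. The main obstacle will be making this termination rigorous: one must verify that after the $0$-bistellar move the facet-distance from $T$ to each remaining original vertex genuinely drops, and handle the situation in which the new cone facets adjacent to the extended $T$ all have their vertex sets already in $V(T)$, so that the greedy rule cannot propagate without yet another subdivision. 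These complications are analogous to those resolved by Walkup in the three-dimensional case, and the same inductive bookkeeping on the potential should carry over.
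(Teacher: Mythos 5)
Your proposal takes a genuinely different route from the paper's, but the gap you flag at the end is fatal as stated, not a technical loose end to be postponed. At a stuck step every facet $\sigma^\ast$ adjacent to $T$ has $V(\sigma^\ast)\subseteq V(T)$. A $0$-bistellar move on $\sigma^\ast$ produces $d$ cone facets, each with vertex set contained in $V(\sigma^\ast)\cup\{w\}\subseteq V(T)\cup\{w\}$; after you absorb one of them into $T$, the only facets newly adjacent to $T$ are the other cone facets, and they again have no vertex outside the enlarged $V(T)$, so the greedy rule is stuck again immediately. Your potential $\sum_{v}d(T,v)$ need not decrease either: on a shortest facet-path $T,\tau_1=\sigma^\ast,\tau_2,\dots,\tau_k$ to an uncovered vertex, the subdivision inserts two cone facets where $\tau_1$ was, only one of which enters $T$, so the remaining facet-distance to the target is still $k$. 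Nothing in the construction forces the cone facet sharing the face $\sigma^\ast\cap\tau_2$ to ever enter $T$; iterating just pumps new interior vertices into the region that was $\sigma^\ast$ without ever reaching $\tau_2$. So termination is not merely unproven, it genuinely fails for the rule as written.

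The paper avoids this by decoupling the two constraints you are trying to enforce simultaneously. It first builds an \emph{abstract} simple $(d-1)$-tree $T$ together with a dimension-preserving simplicial map $\phi:T\to\Delta$ whose image hits every vertex of $\Delta$, allowing $\phi$ to identify vertices of $T$; with the ``new vertex'' requirement dropped from your greedy rule this always succeeds because $M$ is connected, and the spanning condition is satisfied from the outset. The remaining task is only to make $\phi$ injective on vertices. For that it takes $y_t$ to be the last vertex of $T$ (in a natural ordering) whose fiber under $\phi$ has size at least two, observes that $\phi$ is an isomorphism on $\overline{\st}(y_t)$ so the image $B$ of this closed star is a ball, centrally retriangulates $B$ to introduce a new vertex $w$, and resets $\phi(y_t)=w$. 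The quantity $|V(T)|-|V(\Delta)|\ge 0$ then drops by exactly one per step, giving an honest monotone termination argument. Two structural differences are worth internalizing: the retriangulation is of a whole closed star, not a single facet (a $0$-bistellar move is too local to make progress on the injectivity defect), and the potential is built into the construction rather than postulated afterward.
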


\begin{proof}
Let $\Delta$ be a triangulation of $T.$ Since $\Delta$ is connected, there exists a simple $(d-1)$-tree $T$ and a dimension preserving simplicial map $\phi:T \to \Delta$ which maps surjectively onto the  vertices of $\Delta.$  Indeed, $T$ and $\phi$ can be constructed inductively by beginning with a facet of $\Delta$ and attaching new facets to $T$ along codimension one faces corresponding to free codimension one faces of the image of $T$ until all of the vertices of $\Delta$ are in the image of $\phi.$ 

If $\phi$ is one-to-one on the vertices of $T,$ then the image of $T$ satisfies the lemma.  Otherwise, let $y_1,\dots,y_s$ be a natural ordering of the vertices of $T.$  Let $y_t$ be the last vertex of $T$ such that $|\phi^{-1}(\phi(y))| \ge 2.$   The definition of $y_t$ implies that $\phi$ is a simplicial isomorphism when restricted to the closed star of $y_t.$  Hence $B,$ the image of $\overline{\st} (y_t),$ is a ball.  Now let $\Delta^\prime$ be the complex obtained from $\Delta$ by the central retriangulation of  $B.$  Define a new  map $\phi^\prime$ by $\phi^\prime(y_i) = \phi(y_i),$ except $\phi(y_t) = w,$ where $w$ is the new vertex of $\Delta^\prime.$  To see that $\phi^\prime$ induces a simplicial map, it is sufficient to note that if $\phi(y_t) \cup \rho$ is a facet of $\Delta$ which includes $\phi(y_t),$ then $\rho$ is a face on the boundary of $B,$ so $w \cup \rho$ is a face of $\Delta^\prime.$  Thus, $\phi^\prime: T \to \Delta^\prime$ is a simplicial map which also maps surjectively onto the vertices of $\Delta^\prime$, is one-to-one on all the vertices after $y_t,$ and $|\phi^{\prime^{-1}}(\phi^\prime(y_t))|=|\phi^{-1}(\phi^\prime(y_t))|-1.$ Repeating this procedure enough times gives the desired triangulation and $(d-1)$-tree.  See Figure \ref{retriangulate} for a two-dimensional portrayal.
\end{proof}

\begin{figure} 
 \scalebox{0.80}[0.80]{\includegraphics{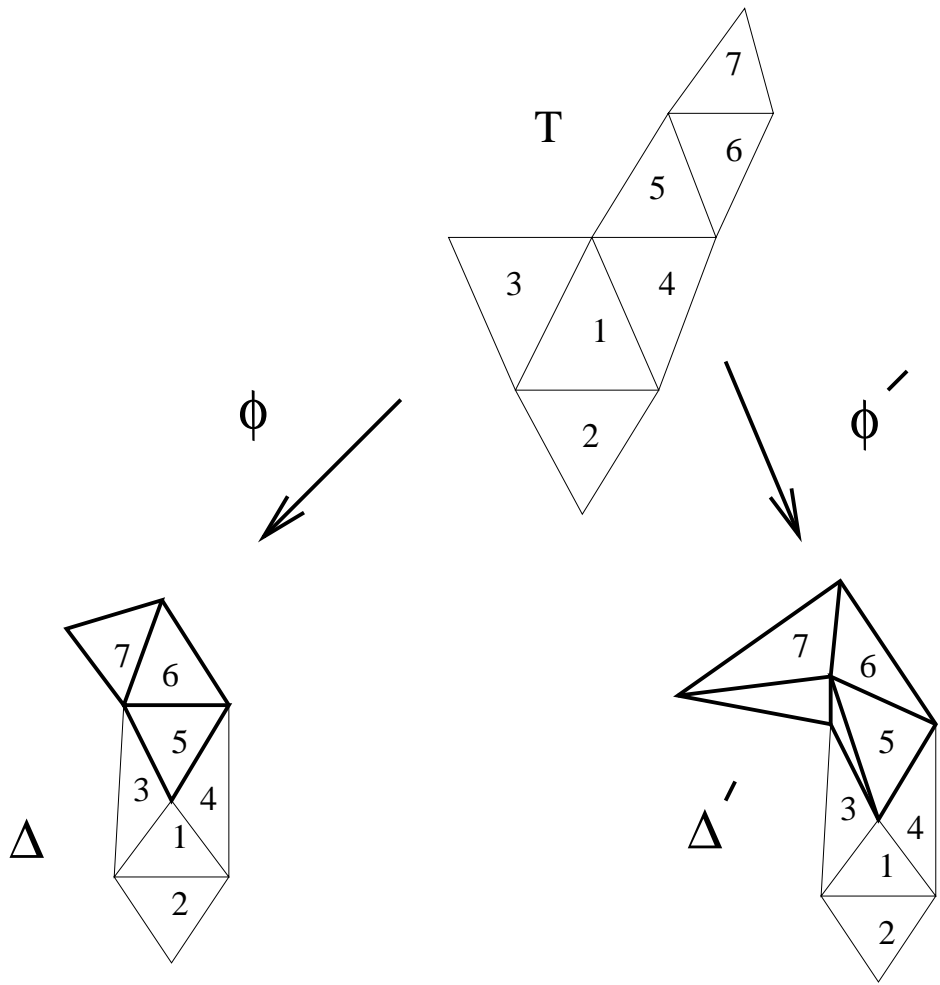}}
  \caption{Creating a spanning simple $(d-1)$-tree} \label{retriangulate}
\end{figure}

\begin{lem}  \label{lem 2}
  Let $M$ be a connected homology manifold without boundary of dimension $d-1$.  If $M$ has a triangulation, then $M$ has a triangulation $\Delta$ which contains a spanning simple $(d-1)$-tree $T$  in which every facet of $T$ contains  a fixed  $(d-3)$-dimensional face.    

\end{lem}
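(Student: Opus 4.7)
The plan is to build on Lemma~\ref{lem 1} by performing further central retriangulations on its output until the final complex $\Delta$ possesses a codimension two face $\rho$ (i.e., a $(d-3)$-dimensional face) satisfying $V(\Delta) \subseteq \rho \cup \lk_\Delta \rho$. Once such a $\rho$ exists, $\lk_\Delta \rho$ is a $1$-dimensional homology sphere, hence a simple cycle containing every vertex of $V(\Delta) \setminus \rho$. Deleting any single edge of this cycle yields a Hamiltonian path $P$, which is a spanning simple $1$-tree in $\lk_\Delta \rho$, and then $\rho \ast P$ is a spanning simple $(d-1)$-tree of $\Delta$ every facet of which contains the fixed $(d-3)$-dimensional face $\rho$, as required.

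The main step is an iterative procedure reducing the number of \emph{external} vertices (those outside $\rho \cup \lk_\Delta \rho$) by one at a time. Starting from the triangulation and spanning simple $(d-1)$-tree $T$ produced by Lemma~\ref{lem 1}, I would choose $\rho$ to be a codimension two subface of the first facet of $T$. If $u$ is an external vertex, I would construct a $(d-1)$-ball $B \subset \Delta$ with $\rho \subset \partial B$ and with $u$ lying in the interior of $B$, and then perform a central retriangulation of $B$, introducing a new apex vertex $w$. Because $\rho$ sits on $\partial B$, it is contained in some facet of $\partial B$, so $\rho \cup \{w\}$ is a face of the retriangulated complex; thus $w$ lies in the new link of $\rho$ and is not external. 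The vertex $u$, being interior to $B$, is deleted. The homeomorphism type is preserved, the number of external vertices strictly decreases, and the procedure terminates after finitely many steps.

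The main obstacle is the PL-topological construction of such a ball $B$. Since $M$ is connected, the dual graph of $\Delta$ contains a sequence of facets from a facet containing $\rho$ to a facet containing $u$, each sharing a codimension one face with its successor. I would build $B$ as a regular neighborhood of this dual path, with parameters tuned so that $\rho$ lies on $\partial B$ while $u$ ends up in the interior of $B$. Equivalently, one can proceed inductively, starting with $B_0 = \overline{\st}_\Delta \rho$ (which has $\rho$ on its boundary) and passing from $B_j$ to $B_{j+1}$ by annexing one facet along the dual path toward $u$, verifying at each stage that $B_j$ remains a PL $(d-1)$-ball with $\rho \subset \partial B_j$, until $u$ becomes interior to some $B_k = B$. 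A final requirement is that $B$ meet $\rho$ only along $\partial B$ so that $\rho$ itself is not destroyed by the retriangulation, which is ensured by taking $B$ to be a sufficiently thin neighborhood of the dual path. These details are the higher-dimensional analogue of the three-dimensional argument in \cite[Lemma 7.3]{Wal}.
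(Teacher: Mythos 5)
Your high-level target is sound: it suffices to find a triangulation and a $(d-3)$-face $\rho$ whose closed star contains every vertex, since $\lk\rho$ is then a Hamiltonian cycle and $\rho \ast P$ for a Hamiltonian path $P$ gives the required tree. But the mechanism you propose for getting there has a concrete error and a serious gap.

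The error: you write that $B_0 = \overline{\st}_\Delta\rho$ ``has $\rho$ on its boundary.'' It does not. Since $M$ has no boundary, $\lk_\Delta\rho$ is a circle, so $\lk_{\overline{\st}\rho}\rho$ is a sphere, which means $\rho$ lies in the \emph{interior} of its own closed star. Your inductive construction of $B$ therefore fails at step zero: if you central-retriangulate any ball having $\rho$ in its interior, $\rho$ itself is destroyed and the invariant you are trying to maintain is lost. The gap: even setting this aside, the existence of a simplicial ball $B$ in an arbitrary homology manifold with $\rho \subset \partial B$, with the chosen external vertex $u$ interior to $B$, and with no face of $\overline{\st}\rho$ interior to $B$ (so that $\lk\rho$ survives the retriangulation as a cycle) is exactly the hard content, and ``a sufficiently thin regular neighborhood of a dual path'' does not deliver it. A union of facets along a path in the dual graph need not be a simple $(d-1)$-tree --- two nonconsecutive facets may share a codimension-one face --- and hence need not be a ball at all; and regular-neighborhood theory in homology manifolds is not available in the form you would need. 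You would also have to rule out the boundary-vertex problem: a vertex $v$ of $\partial B$ with $\rho\cup\{v\}$ an interior face of $B$ would drop out of $\lk\rho$, potentially increasing the external count.

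The paper avoids all of this by never retriangulating anything but simple trees (which are automatically balls with no interior vertices, so nothing gets deleted). It builds the codimension-three face one vertex at a time: cone off a spanning tree to produce $w_1$ whose link contains all other vertices, rerun the Lemma~\ref{lem 1} argument \emph{inside $\lk w_1$} with a map that fixes $w_1$ (so every retriangulation keeps $w_1$ in all image facets), yielding a spanning tree all of whose facets contain $w_1$; then cone again to get $w_2$, work in $\lk\{w_1,w_2\}$, and so on through $w_{d-3}$. To salvage your route you would essentially need to reprove that construction in order to manufacture the balls $B$, at which point the paper's argument is both shorter and safer.
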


\begin{proof}
By the previous lemma we can choose a triangulation $\Delta_0$ of $M$ and a spanning simple $(d-1)$-tree $T_0$ of $\Delta_0.$  Let $\Delta^\prime_1$ be the complex obtained from the central retriangulation of $T_0$ and let $w_1$ be the new vertex. The link of $w_1$ contains all of the other vertices of $\Delta^\prime_1.$   Now we repeat the procedure used in the proof of the previous lemma.  However, the original tree $T$ and $\phi$ are constructed by first choosing a simple $(d-2)$-tree $\tilde{T}_1$ and simplicial map $\tilde{\phi}_1: \tilde{T}_1 \to \lk w_1$ such that all of the vertices in the link of $w_1$ are in the  image of $\tilde{\phi}_1.$ Then let $T_1 = \{w_1\} \ast \tilde{T}_1$ and $\phi_1$ be $\tilde{\phi}_1$ extended to $T_1$ by setting $\phi_1(w_1) = w_1.$ Now each facet of the image of $\phi_1$ contains $w_1$ and $|\phi^{-1}_1(w_1)| = 1.$  Hence, while retriangulating $\Delta^\prime_1$ for the purposes of forcing $\phi_1$ to be one-to-one on the vertices, $w_1$ will remain in all of the facets in the image.   At the end of this process we will have a triangulation $\Delta_1$ and spanning simple $(d-1)$-tree $T_1$ all of whose facets contain $w_1.$ If $d=4$ we are done.  Otherwise, let $\Delta^\prime_2$ be the complex obtained from $\Delta_1$ by the central retriangulation of $T_1$ and let $w_2$ be the new vertex.  Repeat this  process beginning with a simple $(d-3)$-tree $\tilde{T}_2$ and a vertex spanning simplicial map $\tilde{\phi}_2$ from $\tilde{T}_2$ into  the link of $\{w_1,w_2\}.$  Arguing as before we will end up with a simple $(d-1)$ tree $T_2$ in $\Delta_2$ all of whose facets contain $\{w_1,w_2\}.$  This process can be repeated until we obtain the promised triangulation and spanning simple $(d-1)$-tree.
\end{proof}

\begin{lem}  \label{lem 3}
Let $M$ be a connected homology manifold without boundary of dimension $d-1$.  If $M$ has a triangulation, then $M$ has a $2-$neighborly triangulation which contains a spanning simple $(d-1)$-tree $T$ and a codimension three face $\sigma$ such that $\sigma$ is in every facet of $T.$ 

\end{lem}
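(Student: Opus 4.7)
Starting from the preceding lemma, the plan is to iteratively add edges to reach 2-neighborliness while restoring the tree-plus-common-face structure after each modification. First I would invoke the preceding lemma to obtain a triangulation $\Delta$ of $M$, a spanning simple $(d-1)$-tree $T$, and a $(d-3)$-face $\sigma$ contained in every facet of $T$. Because $T$ is spanning and every facet of $T$ contains $\sigma$, each vertex outside $\sigma$ is already joined by an edge to every vertex of $\sigma$; therefore the only possibly missing edges lie entirely within $V(\Delta)\setminus V(\sigma)$.

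If $\Delta$ is already 2-neighborly the lemma follows, so assume some edge $\{u,v\}$ with $u,v \notin V(\sigma)$ is missing. The strategy to introduce this edge is a 1-bistellar move along a $(d-2)$-face $F$ for which both $F\cup\{u\}$ and $F\cup\{v\}$ are facets of $\Delta$. To guarantee such a pivot face exists I would first perform a central retriangulation of $T$, which introduces a new cone vertex $w$ whose closed star is a cone over the stacked $(d-2)$-sphere $\partial T$ and which is adjacent to every other vertex of the complex. The $(d-2)$-faces of the form $\{w\}\cup F'$, where $F'$ ranges over $(d-3)$-faces of $\partial T$, supply a rich family of candidate pivots containing $w$, and because $u$ and $v$ are both adjacent to $w$ and both lie on $\partial T$, one of these pivots should be a codimension-one face of both a facet through $u$ and a facet through $v$. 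The corresponding 1-bistellar move then produces the edge $\{u,v\}$, and the resulting complex is again homeomorphic to $M$.

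After each such edge-adding move I would reapply the two preceding lemmas to recover a spanning simple $(d-1)$-tree and a common $(d-3)$-face in the new triangulation. Since each 1-bistellar move strictly increases $f_1$ and central retriangulations introduce new vertices immediately adjacent to everything else, the number of missing edges is monotonically non-increasing and reaches zero after finitely many iterations, yielding a 2-neighborly triangulation with the desired spanning tree and common face. The principal technical difficulty is the middle step: verifying, via a careful analysis of $\partial T$ and of how further central retriangulations or facet subdivisions enrich the link structure near a given non-edge, that the required pivot for the 1-bistellar move always exists (or can be manufactured by a bounded additional sequence of retriangulations). This is where the combinatorial structure inherited from the tree $T$ with common face $\sigma$ becomes essential, since $\partial T$ is a stacked sphere whose facet adjacencies are largely prescribed by the tree $\lk_T \sigma$.
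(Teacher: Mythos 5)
Your overall strategy---start from the output of the preceding lemma and repeatedly apply $1$-bistellar moves to add missing edges while preserving the spanning-tree-with-common-face property---is indeed the same skeleton the paper uses. The genuine gap is in the central step: you assert, but do not prove, that after a single central retriangulation of $T$ some $(d-2)$-face of the form $\{w\}\cup F'$ is a common codimension-one face of a facet through $u$ and a facet through $v$. This is generically false. After the retriangulation, both $u$ and $v$ lie on the circle that is the link of the codimension-two face $\sigma\cup\{w\}$, but they may be separated by arbitrarily many vertices on that circle; in that case no valid pivot exists. The paper devotes an entire subargument (its ``Step~1'') to fixing this: it performs two carefully chosen central retriangulations of specific simple trees built from two arcs of that circle, which produces a new codimension-two face whose circle link still contains all vertices and in which $u$ and $v$ are separated by exactly one vertex. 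Only then can the $1$-bistellar move (the paper's ``Step~2'') be executed.

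A second, smaller issue: after the bistellar move you propose to ``reapply the two preceding lemmas'' to restore the structure. This needs justification that those lemmas' own central retriangulations never remove edges and thus never re-create the non-edge you just eliminated. Central retriangulation of a ball deletes all of its interior faces, and interior edges do occur in general simple trees, so this requires an argument rather than just the observation that new cone vertices are adjacent to everything. The paper sidesteps this by giving a tailored construction (its ``Step~3''---two further central retriangulations of explicitly described trees in which all edges lie on the tree boundary) that restores the spanning simple $(d-1)$-tree and common codimension-three face while manifestly not disturbing the edge set, and then closes the induction by the explicit count that the number of non-adjacent pairs has dropped by exactly one.
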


\begin{proof}
Using the previous lemma, we begin with a triangulation $\Delta_0$ and spanning simple $(d-1)$-tree $T_0$ every facet of which contains the codimension three face $\rho_0.$   If $\Delta_0$ is 2-neighborly we are done.  So suppose $\Delta_0$ has $m$ pairs of vertices which do not have an edge between them.  Let $\Delta_1$ be the complex obtained by the central retriangulation of $T_0$ and let $w_1$ be the new vertex.  As in the proof of Proposition \ref{machine}, the link of $\rho_0 \cup \{w_1\}$ contains all of the other vertices of $\Delta_1.$ So $\Delta_1$  has the same $m$ pairs of vertices without edges.  Induction will complete the proof if we can construct a triangulation $\Delta$ and face $\rho \in \Delta$ which satisfy the hypothesis of the lemma such that $\Delta$ has only $m-1$ pairs of vertices with no edge between them.

Let $x,y$ be vertices in $\Delta_1$ with no edge between them.  Since $\rho_0 \cup \{w_1\}$ has codimension two, its link is a circle.  As $x$ and $y$ do not have an edge between them they must be in the link of $\rho_0 \cup \{w_1\}$ and be separated by at least one vertex as one travels around the circle. 
The construction of $\Delta$ and $\rho$ consists of three steps.

Step 1:  Retriangulate so that $x$ and $y$ are only separated by one vertex in the link of a codimension two face whose link contains all of the other vertices. 
Write the vertices in the link of $\rho_0 \cup w_1$ in cyclic order, $(x, v_1, \dots, v_s, y, u_1, \dots, u_t).$ If either $s$ or $t$ is one, then step one is complete.  If not, let $P_1$ be the path with ordered vertices $(x, v_1,\dots,v_s,y)$ and let $P_2$ be the path with ordered vertices $(v_s,y,u_1,\dots,u_t).$  Choose $w_0 \in \rho_0$ and set $\rho^\prime_0$ to be $\rho_0$ with $w_0$ removed. If $d=4,$ then $\rho^\prime_0 = \emptyset.$ Set $S = ((\{w_1\} \cup \rho^\prime_0) \ast P_1) \cup ((\{w_0\} \cup \rho^\prime_0) \ast P_2)$ and $T = (\rho_0 \cup w_1) \ast (P_1 \cup P_2).$  Now $T$ is a spanning simple $(d-1)$-tree in $\Delta_1$ and $S$ is a spanning simple $(d-2)$-tree in the boundary of $T.$
Every facet of $S$ contains $\rho^\prime_0$.  Figure \ref{S} shows $S$ in the link of $\rho^\prime_0.$

\begin{figure} 
 \scalebox{0.80}[0.80]{\includegraphics{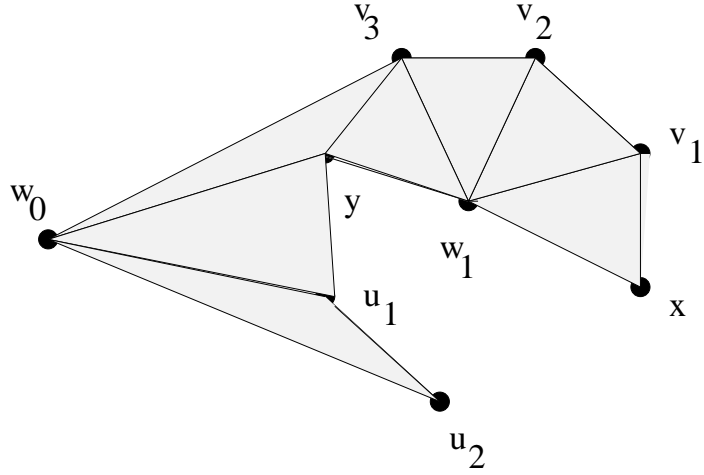}}
  \caption{The tree $S$ in the link of $\rho^\prime_0$} \label{S}
\end{figure}

Let $\Delta_2$ be the complex obtained from $\Delta_1$ by the central retriangulation of $T$ and let $w_2$ be the new vertex.  In this complex $S \ast \{w_2\}$ is a spanning simple $(d-1)$-tree.  So now we let $\Delta_3$ be the complex obtained from $\Delta_2$ by the central retriangulation of $S \ast \{w_2\}$ and call the new vertex $w_3.$  Using Figure \ref{S}, we can see that the link of $\{w_2,w_3\} \ast \rho^\prime_0$ in $\Delta_3$ is the circle $(y, w_1,x,v_1,\dots,v_s,w_0,u_t,u_{t-1},\dots,u_1).$  Note that in each of the retriangulations the pairs of vertices without edges have not changed.  

Step 2:  Perform a 1-bistellar move on $\partial(\{x,y\}) \ast (\{w_1,w_2,w_3\} \ast \rho^\prime_0)$ and call the resulting complex $\Delta_4.$  This introduces an edge between $x$ and $y$ and leaves $m-1$ pairs of vertices without edges.  However, $w_1$ is no longer in the link of $\{w_2,w_3\} \ast \rho^\prime_0.$

Step 3:  Let $P_4$ be the path whose vertices in order are $(x,y,u_1,\dots,u_t, \\ w_0,v_s,\dots,v_1).$  Form a new $(d-2)$-tree, $\tilde{S}_4 = \rho^\prime_0 \ast  ((\{w_2\} \ast P_4) \cup \{x, w_1, y\} ).$  Now $T_4 = \{w_3\} \ast \tilde{S}_4$ is a spanning simple $(d-1)$-tree in $\Delta_4.$  In addition, $S_4 = \tilde{S}_4 \cup \{\{y, w_1, w_3\} \ast \rho^\prime_0\}$ is a spanning simple $(d-2)$-tree in the boundary of $T_4.$  Two more triangulations will finish the job.  First, $\Delta_5$ is the complex obtained from $\Delta_4$ by the central retriangulation of $T_4$ with new vertex $w_4.$  In this complex $T_5=\{w_4\} \ast S_4$ is a spanning simple $(d-1)$-tree. Finally, set $\Delta$ to be the complex obtained from the central retriangulation of $T_5$ with new vertex $w_5,$ and let $\rho = \{w_4,w_5 \} \cup \rho^\prime_0.$  Here we see that the link of the codimension two face $\rho$ contains all the other vertices of $\Delta$ and there are only $m-1$ pairs of vertices that do not span an edge.

\end{proof}

\begin{thm}
Let $M$ be a connected  homology manifold without boundary of dimension $d-1$.   Then there exists $\gamma(M)$ such that for every pair $(a,b)$ with $\gamma(M) \le b \le \binom{a+1}{2}$ there exists a triangulation $\Delta$ of $M$ with $g_1(\Delta) = a$ and $g_2(\Delta) = b.$  

\end{thm}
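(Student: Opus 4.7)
The plan is to deduce the theorem directly from Lemma~\ref{lem 3} and Proposition~\ref{machine}; the hard combinatorial work has already been carried out in those two statements, so only a bookkeeping conversion remains. First I would apply Lemma~\ref{lem 3} to $M$ to produce a $2$-neighborly triangulation $\Delta_0$ of $M$ together with a codimension-three face $\sigma \in \Delta_0$ and a spanning simple $(d-1)$-tree $T$ all of whose facets contain $\sigma$. Equivalently, the link of $\sigma$ in $\Delta_0$ contains a spanning simple $2$-tree, which is exactly the hypothesis of Proposition~\ref{machine}.

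By Proposition~\ref{machine}, every pair of integers $(A,B)$ satisfying $A \ge h_1(\Delta_0)$ and $g_2(\Delta_0) + A \le B \le \binom{A+1}{2}$ is realized as $(h_1(\Delta'), h_2(\Delta'))$ for some triangulation $\Delta'$ of $M$. Passing to $g$-coordinates via $g_1 = h_1 - 1$ and $g_2 = h_2 - h_1$, and using the identity $\binom{A+1}{2} - A = \binom{A}{2}$, these three constraints become $g_1 \ge g_1(\Delta_0)$, $g_2 \ge g_2(\Delta_0)$, and $g_2 \le \binom{g_1+1}{2}$. I would then set
\[
\gamma(M) \;:=\; \max\bigl\{\,g_2(\Delta_0),\; \binom{g_1(\Delta_0)+1}{2}\,\bigr\}.
\]
For any pair $(a,b)$ with $\gamma(M) \le b \le \binom{a+1}{2}$, the lower bound $b \ge g_2(\Delta_0)$ holds immediately, while $\binom{a+1}{2} \ge b \ge \binom{g_1(\Delta_0)+1}{2}$ forces $a \ge g_1(\Delta_0)$. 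Applying Proposition~\ref{machine} then produces a triangulation of $M$ with $g_1 = a$ and $g_2 = b$, as required.

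I do not anticipate a genuine obstacle here: the construction of a $2$-neighborly triangulation of $M$ carrying a spanning $(d-1)$-tree through a codimension-three face is delivered by Lemmas~\ref{lem 1}--\ref{lem 3}, and the realization of the target $(h_1,h_2)$ pairs via bistellar moves and central retriangulations is the content of Proposition~\ref{machine}. The only care required is in the conversion between $(h_1,h_2)$- and $(g_1,g_2)$-coordinates and in choosing $\gamma(M)$ large enough that the Macaulay-type upper bound $b \le \binom{a+1}{2}$ itself forces the lower bound $a \ge g_1(\Delta_0)$.
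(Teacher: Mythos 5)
Your proof is correct and follows the same route as the paper, whose proof of this theorem is the one-liner ``Apply Proposition~\ref{machine} to the triangulation guaranteed by Lemma~\ref{lem 3}.'' Your translation into $g$-coordinates (using $\binom{A+1}{2}-A=\binom{A}{2}$) and the explicit choice $\gamma(M)=\max\{g_2(\Delta_0),\binom{g_1(\Delta_0)+1}{2}\}$, together with the observation that $b\le\binom{a+1}{2}$ and $b\ge\binom{g_1(\Delta_0)+1}{2}$ force $a\ge g_1(\Delta_0)$, simply fills in the bookkeeping the paper leaves to the reader.
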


\begin{proof}
  Apply Proposition \ref{machine} to the triangulation guaranteed by Lemma \ref{lem 3}.

\end{proof}

\begin{cor}
  Let $M$ be a closed homology manifold of dimension at least three.  If $M$ has a triangulation, then  for $n$ sufficiently large there exist $2$-neighborly triangulations of $M$ with $n$ vertices.
\end{cor}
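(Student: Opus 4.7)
The plan is to apply the preceding theorem with $b$ equal to its upper endpoint $\binom{a+1}{2}$ and observe that this choice forces $2$-neighborliness.

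First I would verify the key translation: a triangulation $\Delta$ of a $(d-1)$-dimensional closed homology manifold on $n$ vertices is $2$-neighborly precisely when $g_2(\Delta) = \binom{g_1(\Delta)+1}{2}$. This is a direct calculation from (\ref{f by h}): since $h_1 = n-d$ and $h_2 = \binom{d}{2} - (d-1)n + f_1$, the identity $g_2 = h_2 - h_1$ rearranges to
\begin{equation*}
g_2 = f_1 - \binom{n}{2} + \binom{n-d}{2},
\end{equation*}
so $g_2 = \binom{n-d}{2} = \binom{g_1+1}{2}$ if and only if $f_1 = \binom{n}{2}$, i.e., every pair of vertices spans an edge.

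Next I would invoke the preceding theorem. Setting $a = n - d - 1 = g_1$, the pair $(a, \binom{a+1}{2})$ lies in the admissible range of that theorem as soon as $\binom{a+1}{2} \geq \gamma(M)$. Choose $N = N(M)$ large enough that $\binom{n-d}{2} \geq \gamma(M)$ for all $n \geq N$; this is possible since the left side grows without bound in $n$ while $\gamma(M)$ is a fixed constant depending only on $M$. Then for any $n \geq N$ the theorem produces a triangulation $\Delta$ of $M$ with $g_1(\Delta) = n - d - 1$ (hence $n$ vertices) and $g_2(\Delta) = \binom{n-d}{2}$, which by the observation above is $2$-neighborly.

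There is essentially no obstacle here beyond the bookkeeping, since the preceding theorem already does all the hard work of producing triangulations realizing every admissible $(g_1, g_2)$ pair; the only content is recognizing that the upper boundary of the admissible region corresponds exactly to the $2$-neighborly condition, and that this boundary enters the admissible region once $n$ is large enough.
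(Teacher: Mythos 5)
Your proof is correct and fills in exactly the argument the paper leaves implicit (the corollary appears with no written proof). The two key observations — that $g_2 = \binom{g_1+1}{2}$ is equivalent to $2$-neighborliness (your calculation $g_2 = f_1 - \binom{n}{2} + \binom{n-d}{2}$ checks out), and that the pair $(a, \binom{a+1}{2})$ enters the admissible range of the preceding theorem once $\binom{a+1}{2} \ge \gamma(M)$ — are precisely what is needed, and your bookkeeping with $a = n-d-1$ correctly produces triangulations for every sufficiently large $n$.
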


As mentioned before, our  proof of the existence of 2-neighborly triangulations is based on Walkup's proof of this for  3-manifolds \cite{Wal}.  A very different approach in dimension three is Sarkaria's  proof of the existence of 2-neighborly triangulations of 3-manifolds with or without boundary \cite{Sa}.

{\it Acknowledgements:}  The author had helpful suggestions and conversations  with several people, including Anders Bj\"{o}rner, Louis Billera, Frank Lutz, Gil Kalai, Richard Ehrenborg,  Eran Nevo, Isabella Novik, Branden Owens, and G\"unter Ziegler.

\end{document}